\newcommand{\executeiffilenewer}[3]{%
 \ifnum\pdfstrcmp{\pdffilemoddate{#1}}%
 {\pdffilemoddate{#2}}>0%
 {\immediate\write18{#3}}\fi%
}
\newcommand{%
 \input{.pdf_tex}%
}[1]{%
 \input{#1.pdf_tex}%
}
\newtheorem{claim}{Claim}
\newtheorem{assumption}{Assumption}
\newtheorem{postulate}{Postulate}
\pgfplotsset{every tick label/.append style={font=\footnotesize}}
\newcommand{\argmin}{\mathop{\mathrm{argmin}}}
\newcommand{\argmax}{\mathop{\mathrm{argmax}}}
\newcommand{\TT}{\ensuremath{\mathsf{\tiny{T}}}}
\newcommand{\T}{^{\TT}}
\newcommand{\diff}[1][]{\mathrm{d}#1}
\newcommand{\dt}{\diff t }
\newcommand{\hchange}[1]{#1}
\newcommand{\hchangeII}[1]{#1}
\begin{document}

\title{On Constraints in First-Order Optimization: A View from Non-Smooth Dynamical Systems}

\author{\name Michael Muehlebach \email michaelm@tuebingen.mpg.de \\
       \addr  Learning and Dynamical Systems Group \\ 
       Max Planck Institute for Intelligent Systems \\
       72076 T\"ubingen, Germany
       \AND
       \name Michael I.\ Jordan \email jordan@cs.berkeley.edu \\
       \addr Department of Electrical Engineering and Computer Sciences\\
       Department of Statistics\\
       University of California\\
       Berkeley, CA 94720, USA}

\editor{Prateek Jain}

\maketitle

\begin{abstract}
We introduce a class of first-order methods for smooth constrained optimization that are based on an analogy to non-smooth dynamical systems. Two distinctive features of our approach are that (i) projections or optimizations over the entire feasible set are avoided, in stark contrast to projected gradient methods or the Frank-Wolfe method, and (ii) iterates are allowed to become infeasible, which differs from active set or feasible direction methods, where the descent motion stops as soon as a new constraint is encountered. The resulting algorithmic procedure is simple to implement even when constraints are nonlinear, and is suitable for large-scale constrained optimization problems in which the feasible set fails to have a simple structure.  The key underlying idea is that constraints are expressed in terms of velocities instead of positions, which has the algorithmic consequence that optimizations over feasible sets at each iteration are replaced with optimizations over local, sparse convex approximations. \hchange{In particular, this means that at each iteration only constraints that are violated are taken into account.} The result is a simplified suite of algorithms and an expanded range of possible applications in machine learning.
\end{abstract}

\begin{keywords}
Convex optimization, nonconvex optimization, constrained optimization, non-smooth dynamical systems, gradient-based optimization, convergence rate analysis
\end{keywords}

\section{Introduction}
Optimization has played an essential role in machine learning in recent years, providing a conceptual and practical platform on which algorithms, systems, and datasets can be brought together at unprecedented scales. This joint platform has led to high-impact applications, the discovery of new phenomena, and the development of new theory.  One of the major themes that have catalyzed the interplay between optimization and learning is that ``simple is good.''  Whereas classical optimization has tended to focus on relatively complex schemes for determining update directions and step sizes, the recent focus of research at the learning/optimization interface
has been on algorithms that use simple, stochastic approximations to first-order operators and employ step sizes that are set via simple averaging schemes, or even use constant step sizes.  The simplifications have worked well in practice and have triggered the development of commodity software systems that are increasingly general and robust.  They have also, appealingly, created new challenges for theoreticians, who have begun to develop new tools to fill in the gaps that the absence of strong assumptions has opened up.

Somewhat overlooked in all of these developments is the treatment of \emph{constraints} in machine-learning problems.  Machine-learning practitioners often handle constraints on parameters and predictions via simple, adhoc reparameterizations.  This reflects the ``simple is good'' dictum, but it also creates a need to develop special-case reparameterizations in many cases and it poses additional challenges for theory, as convergence rates can be affected by the reparameterizations.  More significantly, it overlooks the broader potential role that constrained optimization can play in machine learning.  Moving beyond pattern recognition, emerging problems involving decision-making in real-world, multi-agent settings often involve contextual-driven
constraints.  Control-theoretic problems generally involve interactions with physical, biological, and social systems, whose laws are often expressed in terms of fundamental constraints.  Mathematically, constraints can simplify statements of existence and uniqueness, simplify the specification of sets of solutions, and allow duality principles to be brought to bear.

There is a nascent thread of research on constrained optimization in machine learning that has aimed to build on the success of first-order methods.  It has focused primarily on \emph{projected gradient algorithms} and the \emph{Frank-Wolfe method}.  Both of these methods involve an inner loop that is nested inside of the overall procedure---in the former case the optimization of a quadratic function and in the latter case the
optimization of a linear function.  In both cases the optimization is over the entire feasible set.  From a theoretical point of view, these are relatively simple methods, providing hooks such that convergence analyses from the unconstrained case can be readily brought to bear.  Moreover, they can be easy to implement when the feasible set has a simple structure, such as a norm ball or a low-dimensional hyperplane.  In these cases it is often possible to obtain closed-form expressions for the inner loop.  This simplicity can disappear entirely, however, when the feasible set fails to have a simple structure.  In such cases, optimizing a quadratic or linear function over the entire feasible set becomes prohibitive, and the ``simple is good'' dictum provides no clear path forward.

\hchange{Important machine learning examples where nonlinear constraints are key includes reinforcement learning, where autonomous agents are often required to plan trajectories that avoid obstacles and satisfy the laws of physics \citep{Frazzoli}. Obstacle avoidance constraints are generally nonconvex and cannot be easily handled with projections. Similarly, minimax problems that arise in generative adversarial networks or robust learning problems, for example, can be reformulated as constrained optimization problems, leading to semi-infinite and nonconvex constraints~\citep{AdversarialRobustness}. Machine learning applications in chemistry and physics often benefit from incorporating prior knowledge, for example in the form of symmetries and invariants. While much of the recent work has focused on reparametrizing convolutional layers in neural networks~\citep{SchNet,weiler20183d}, these symmetries and invariants are described by nonconvex and nonlinear constraints.}

When the structure of the feasible set fails to enable closed-form projections or closed-form solutions for Frank-Wolfe updates, optimization theorists often turn to interior point or sequential quadratic programming methods.  The idea of interior point methods is to reduce the constrained optimization problem to an unconstrained one by using barrier functions that assign a high cost to points close to the boundary of the feasible set.  In sequential quadratic programming, the underlying nonlinear problem is approximated by a series of quadratic programs.  While both classes of methods have been proposed for applications in machine learning~\citep[see, e.g.,][]{BoydL1,InteriorPointSupportVector,Zeilinger}, they are significantly more complex than the stochastic-gradient methods that have been so successful in unconstrained machine learning.  There remains a need for a learning-friendly approach to constrained optimization.

In the current paper, we present a class of first-order methods that are applicable to a wide range of problems in machine learning.  A notable simplification of these methods, relative to classical constrained optimization methods, including projection methods and Frank-Wolfe, is that our methods rely exclusively on \emph{local approximations of the feasible set}.  These local approximations are a natural generalization of Clarke's tangent cone and are well defined for feasible and infeasible points.  Moreover, as we will show, they make possible a key algorithmic simplification---they yield algorithms that converge even with a constant step size.  Technically, they handle the case when the iterates become infeasible.  This makes the resulting algorithmic procedure simple to implement and also ensures that the descent motion is not necessarily stopped as soon as a new constraint is violated.  Finally, while the entire feasible set might be described by a very large (or even infinite) number of \emph{nonlinear} constraints, the local approximation typically only includes a small number of \emph{linear} constraints, which substantially reduces the amount of computation required for a single iteration.

We believe that these simplifications make our approach a natural candidate for large-scale constrained machine-learning problems.  Our main goal in the current paper is to provide a theoretical foundation to support such a claim.  We also present results from a preliminary set of numerical experiments, which include, for example, randomly generated high-dimensional quadratic programs.  Comparing the new methods to the interior point solver CVXOPT of \citet{CVXOPT}, we find that the complexity of the new methods scales roughly with $n^2$ (where $n$ is the problem dimension), whereas the complexity of the interior point solver scales with $n^3$.  When $n$ is large, this may lead to speedups of several orders of magnitude.

As our discussion has hinted, while our methods are relatively simple to specify and deploy, their analysis brings new challenges.  Our treatment builds on recent progress in using continuous-time dynamical systems tools to analyze discrete-time algorithms in gradient-based optimization~\citep{SuAcc, WibisonoVariational, Diakonikolas2, KricheneAcc, Gui, Betancourt, ourWork, ourWork3, ourWork2}.  Much of the work in this vein is focused on understanding accelerated first-order optimization methods, such as Nesterov's algorithm, where the understanding arises by exposing links between differential and symplectic geometry, dynamical systems, and mechanics.  These links, which supply a mechanical interpretation of accelerated methods and provide a rigorous interpretation of concepts such as ``momentum,'' are often easiest to derive in continuous time, making use of variational, Hamiltonian, and control-theoretic perspectives.  Indeed, the most complex part of these analyses often arises in the conversion from continuous time to discrete time.

In line with this recent literature, our treatment of constrained optimization also straddles the boundary between continuous time and discrete time.  As in the unconstrained setting, the continuous case is relatively straightforward and the major challenges arise in the conversion to discrete time.  Indeed, the key novelty is that in our constrained setting, the discrete-time function that maps one iterate to the next is \emph{discontinuous}.  Thus, tools such as smooth Lyapunov functions or the theory of monotone operators that have been widely employed in the unconstrained setting are not applicable in our setting, and a new analysis framework is needed.  We develop such a framework by making use of ideas from \emph{non-smooth mechanics}.  Indeed, as we will discuss in the following section, the closest point of contact with existing literature is the notion of \emph{Moreau time-stepping} in non-smooth mechanics.

\textbf{Related work:}
In the following paragraphs we highlight some of the connections of our approach to the existing literature.  Due to the wealth of work on constrained optimization over the last several decades, a comprehensive summary seems out of reach.  We will therefore focus on ideas that are most closely related to our approach and refer to the textbooks of \citet{Bertsekas}, \citet{NesterovIntro}, \citet{Wright}, or \citet{LuenbergerBook} for a broader overview.

Our approach is in the spirit of projected gradient methodology.  The basic idea of the projected gradient method is to compute a step along the negative gradient of the objective function and to project the resulting point back to the feasible set~\citep[see, e.g.,][Ch.~2.3]{Bertsekas}.  From a theoretical point of view, the analysis of projected gradients strongly parallels that of unconstrained gradient descent.  Indeed, by generalizing the notion of the gradient to the ``gradient mapping''~\citep[][p.~86]{NesterovIntro}, arguments can be readily translated from the unconstrained to the constrained case.  More generally, projected gradients can be viewed as an instance of a proximal point algorithm~\citep{ProxAlg}, which itself can be elegantly described with the theory of monotone operators~\citep{ConvexAnalysisBauschke,Rockafellar}.

The key difference between our approach and classical projected gradients is that our approach is based on a local approximation of the feasible set.  This local approximation includes only the active constraints\footnote{We say that the $i$th constraint is \emph{active} at the iterate $x_k$ if $g_i(x_k)\leq 0$, where the smooth function $g:\mathbb{R}^n \rightarrow \mathbb{R}^{n_\text{g}}$ describes the feasible set as $\{ x\in \mathbb{R}^n~|~g(x)\geq 0\}$.  It is important to note that this definition of active constraints does not require the corresponding dual multipliers to be nonzero.} and is guaranteed to be a convex cone even if the underlying set is nonconvex.  Our approach can be viewed as an inexact projected gradient method, and as such has similarities to the work of \citet{inexactPG} and \citet{inexactPG2}.  However, in contrast to this work, we do not impose a monotone decrease of the cost function by an appropriate line search.  In fact, our approach converges even with a constant step size, whereby the objective function fails to be monotonically decreasing (in general).

While projected gradient approaches have been successfully applied in various machine learning problems~\citep[see, e.g.,][]{SignalProcessing,SVMwPG}, an even simpler algorithm---the Frank-Wolfe algorithm---has also received considerable attention in recent years~\citep{Jaeggi}.  At each iteration of the Frank-Wolfe algorithm, a feasible descent direction is computed by maximizing the inner product with the negative gradient.  This reduces to the minimization of a linear objective function over the feasible set, which, compared to projected gradients, can lead to considerable simplification.  The simplification is in accord with the ``simple is good'' dictum of machine learning, and indeed it has been found that the Frank-Wolfe algorithm provides a unified theoretical framework for many greedy machine learning algorithms, including support vector machines, online estimation of mixtures of probability densities, and boosting~\citep{Clarkson}.  Recent results extend the Frank-Wolfe algorithm to the stochastic setting~\citep{PfOnlineLearning,OneSampleStochasticFW}, or improve on its relatively slow convergence rate~\citep{BoostingFrankWolfe,FasterRates}.

\hchange{In some cases, constraints can be handled very effectively by mirror descent~\citep[Ch.~3]{ProblemComplexity}. The underlying idea of mirror descent is to introduce a non-Euclidean metric for adapting gradient descent to the specific type of objective function or the specific type of constraints at hand~\citep{BeckMirror}. While mirror descent relies on projections on the feasible set, the non-Euclidean metric can improve on problem-specific constants and lead to algorithms whose complexity scales mildly in the number of decision variables. A prominent example is the optimization of linear functions over the unit simplex, which has important applications in online learning and online decision making~\citep[see ][Ch.~5]{BubeckBandits}.}

As we have already discussed, alternatives to projected gradients, \hchange{mirror descent}, and Frank-Wolfe include interior point methods and sequential quadratic programming.  Interior point methods provide practical solutions to many problems in constrained optimization, and they are guaranteed to return approximate solutions to many convex nonlinear programming problems in polynomial time~\citep{NesterovInterior}.  They can be particularly efficient if the underlying Karush-Kuhn-Tucker system is sparse, which can be exploited for simplifying the Newton updates~\citep{Zeilinger}.  Similarly, in sequential quadratic programming, the underlying Karush-Kuhn-Tucker system resembles the Newton update of interior point methods.  There are many different flavors of sequential quadratic programming, depending on the type of line search, whether only approximate second order information is used, or whether equality constraints are eliminated.  An implementation that is widely used to solve complex optimal control and planning problems is presented in \citet{SNOPT}.  Recent advances in sequential quadratic programming share some similarity with our approach; see, for example, \citet{Torrisi} and \citet{Haeberle}.  Both of these methods involve linearizing both the active and inactive constraints.  The fact that all constraints are taken into account at each iteration enables the algorithms to anticipate constraint violations and distinguishes these approaches from the methods that will be discussed herein.

Finally, a main goal of the current paper is to bring to the fore an analogy between constrained optimization and non-smooth mechanics.  Indeed, from a certain point of view, finding stationary points of a constrained optimization problem is equivalent to computing equilibria of a corresponding non-smooth mechanical system.  The classical approach to simulating such systems is \emph{event-based integration}, which is a relatively complex algorithm that switches between smooth and non-smooth motion.  An alternative is Moreau time-stepping \citep{Moreau}, which is based on the discretization of a measure-differential inclusion that captures the smooth and non-smooth parts of the motion.  Moreau's algorithm can handle multiple (or even an infinite number of) discontinuities that may all happen within one time step. Further background can be found in the texts of \citet{Glocker} and \citet{Studer}.  Recent work in this area includes extensions to continuum mechanics \citep{Eugster} and higher-order integration schemes \citep{Acary}. 

Although we will exploit analogies to the simulation of physical systems, the focus of our theoretical analysis is in developing algorithms that efficiently compute approximate local minima of constrained nonlinear programming problems.  In this setting, it will be crucial to consider large time steps, to handle constraint violations (which are often ignored when simulating non-smooth mechanical systems), and to provide convergence guarantees in discrete time. \hchange{Our continuous-time analysis is also related to the theory of gradient inclusions, which are gradient flow dynamics on nonsmooth convex functionals~\citep[see, e.g.,][Ch.~3]{AubinDifferentialInclusions}. These have been extensively studied in the mathematical community due to their numerous applications, for example in the calculus of variations~\citep{Arrigo}. Our gradient flow formulation relies on local approximations of the feasible set. These approximations evolve over time, and as such, the dynamics can be viewed as a generalization of a sweeping process~\citep{MoreauSweeping}.}

Compared to classical treatments of constrained optimization, our treatment exhibits a key feature that arises directly from the physical analogy.  Rather than expressing constraints in the language of positions or configurations, as is standard in optimization, our constraints will be expressed in terms of velocities.  Thus, we will distinguish between constraints on the ``position level'' and constraints on the ``velocity level.''  Our focus on the latter will be seen to lead directly to a local, convex approximation of the feasible set.  By a constraint on velocity level, we mean a constraint on the forward increment $\lim_{\dt \downarrow 0} (x(t+\dt)-x(t))/\dt$ in continuous time or the difference $(x_{k+1}-x_k)/T$ in discrete time, where $T$ is the step size.  In continuous time, a given position constraint can (in most cases) be reformulated as an equivalent velocity constraint.  However, this equivalence breaks down in discrete time, which necessitates a careful analysis of the resulting discrete-time algorithms. We also note that there are (many) mechanical systems that have velocity constraints which cannot be formulated as position constraints. For example, while ice skater can move to any position in a skating rink, their velocity is constrained to lie parallel to the blades of the skates.

\textbf{Notation:} We follow standard notation from convex analysis. In particular, $\mathbb{R}$ denotes the real numbers, $\mathbb{R}_{\geq 0}$ the nonnegative real numbers, $\mathbb{R}_{\leq 0}$ the nonpositive real numbers, and $\mathbb{Z}$ the set of all integers. The notation $|\cdot|$ is reserved for the Euclidean norm or the cardinality of a set. The gradient of a function $h: \mathbb{R}^n \rightarrow \mathbb{R}^m$ is denoted by $\nabla h: \mathbb{R}^n \rightarrow \mathbb{R}^{n\times m}$ and the indicator function of the set $C$ is referred to as $\psi_C:\mathbb{R}^n \rightarrow \mathbb{R} \cup \{\infty\}$, that is, $\psi_C(x)$ takes the value zero for $x\in C$ and $\infty$ otherwise. The subgradient of a convex function $g: \mathbb{R}^n \rightarrow \mathbb{R}$ evaluated at $x\in \mathbb{R}^n$ is denoted by $\partial g(x)$ and is defined as the set $\{v\in \mathbb{R}^n~|~v\T (y-x) \leq g(y)-g(x), ~\forall y\in \mathbb{R}^n\}$. The tangent cone (in the sense of Clarke) at any point $x\in C$ is referred to as $T_C(x)$, that is, $\delta x\in T_C(x)$ if there exist two sequences $x_j\rightarrow x$, $x_j\in C$, $t_j\downarrow 0$, such that $(x_j-x)/t_j \rightarrow \delta x$. The corresponding normal cone is denoted by $N_C(x):=\{ \lambda\in \mathbb{R}^n ~|~\lambda\T \delta x \leq 0, \forall \delta x\in T_C(x) \}$. 
\hchange{We will consider trajectories $x: \mathbb{R}_{\geq 0} \rightarrow \mathbb{R}^n$ that are absolutely continuous and have a piecewise continuous derivative. Absolute continuity means that $x(t)-x(0)$ can be expressed as the Lebesgue integral over the velocity $\dot{x}$; that is, $x(t)=x(0)+\int_{0}^{t} \dot{x}(\tau) \diff \tau$ for all $t\geq 0$. The assumption that $\dot{x}$ is piecewise continuous means that on any finite interval, $\dot{x}$ is continuous except at a finite number of points, where left and right limits, denoted by $\dot{x}(t_0)^-$ and $\dot{x}(t_0)^+$, are well-defined. The value $\dot{x}(t_0)$ at the discontinuity $t_0$ is of no interest and may or may not exist. }
Finally, we use subscripts to denote both single components of a vector and the iteration number of a discrete algorithm. The distinction will be made from context (we usually reserve the subscript $k$ for the iteration number).
\section{Overview of the Results}\label{Sec:Summary}
We consider the following optimization problem:
\begin{equation}
\min_{x\in C} f(x),\quad \text{where}\quad C:=\{ x\in \mathbb{R}^n~|~ g(x)\geq 0,~h(x)=0\}, \label{eq:fundProb}
\end{equation}
\hchange{and where $f:\mathbb{R}^{n}\rightarrow \mathbb{R}$ defines the objective function. The functions $g:\mathbb{R}^{n} \rightarrow \mathbb{R}^{n_\text{g}}$ and $h:\mathbb{R}^{n} \rightarrow \mathbb{R}^{n_\text{h}}$ define the constraints, and $n$, $n_\text{g}$, and $n_\text{h}$ are positive integers. The functions $f$, $g$, and $h$ are continuously differentiable and have a Lipschitz continuous gradient. Moreover, $f$ is assumed to be such that $f(x) \rightarrow \infty$ for $|x|\rightarrow \infty$ and $C$ is assumed to be non-empty and bounded, which guarantees that the minimum in \eqref{eq:fundProb} is attained.}

\paragraph{Brief summary of the main contributions:} In mathematical optimization constraints are typically treated by direct reference to positions, meaning that $x_k$ or $x(t)$ are constrained to lie in $C$ for all $k\geq 0$ or all $t\geq 0$, respectively. We adopt a fundamentally different point of view---instead of constraining $x(t)$ or $x_k$, we constrain the forward velocity $\dot{x}(t)^+=\lim_{\diff t \downarrow 0} (x(t+\diff t)-x(t))/\diff t$ or forward increments $(x_{k+1}-x_k)/T$. At a given position $x\in \mathbb{R}^n$, the set of all admissible velocities will be denoted by $V_\alpha(x)\subset \mathbb{R}^n$. When $x\in C$, the set $V_\alpha(x)$ corresponds to the tangent cone of the set $C$ at $x$. We will introduce an appropriate generalization of $V_\alpha(x)$ in order to also capture cases in which $x\not \in C$. The two different point of views on constraints are illustrated in Figure~\ref{Fig:illustration1}.

In continuous time, the resulting velocity constraint is equivalent to the original position constraint, assuming constraint qualification. However, this equivalence breaks down in discrete time, and may lead to infeasible iterates over the course of the optimization. One of our main results is a guarantee that the resulting discrete algorithm nonetheless converges to stationary points, despite the possibility of infeasible iterates and despite the discontinuous nature of the map from $x_k$ to $x_{k+1}$. In addition to providing such a guarantee, we derive rates of convergence and we show that a formulation of constraints on the velocity level can lead to computational advantages. In particular, we show that at each iteration, only a \emph{linear} and \emph{convex} approximation of the original \emph{nonlinear} and \emph{nonconvex} feasible set needs to be considered. Moreover, the linear approximation includes only the constraints that are active at $x(t)$ or $x_k$. On randomly generated dense quadratic programs, for example, the complexity of the proposed method scales with $n^2$ (empirically), which contrasts with  state-of-the-art implementations of an interior-point method, which scale with $n^3$. Moreover, in many practical problems (for example, support vector machines) the proposed algorithm greatly reduces the number of constraints that must be considered at each iteration. 

\hchange{In summary, the purpose of this article is twofold: (i) we highlight that ``position" constraints can be reformulated as ``velocity" constraints, which leads to a new perspective on constrained optimization that has connections to non-smooth mechanics, and (ii) we exemplify and showcase this new point of view on gradient flow and gradient descent, which is arguably one of the simplest, but also most relevant, use cases for machine learning. This is done by providing formal convergence guarantees in continuous and in discrete time, deriving rates, and studying the behavior of the algorithms in numerical experiments.}

\begin{figure}
    \centering
    \scalebox{0.75}{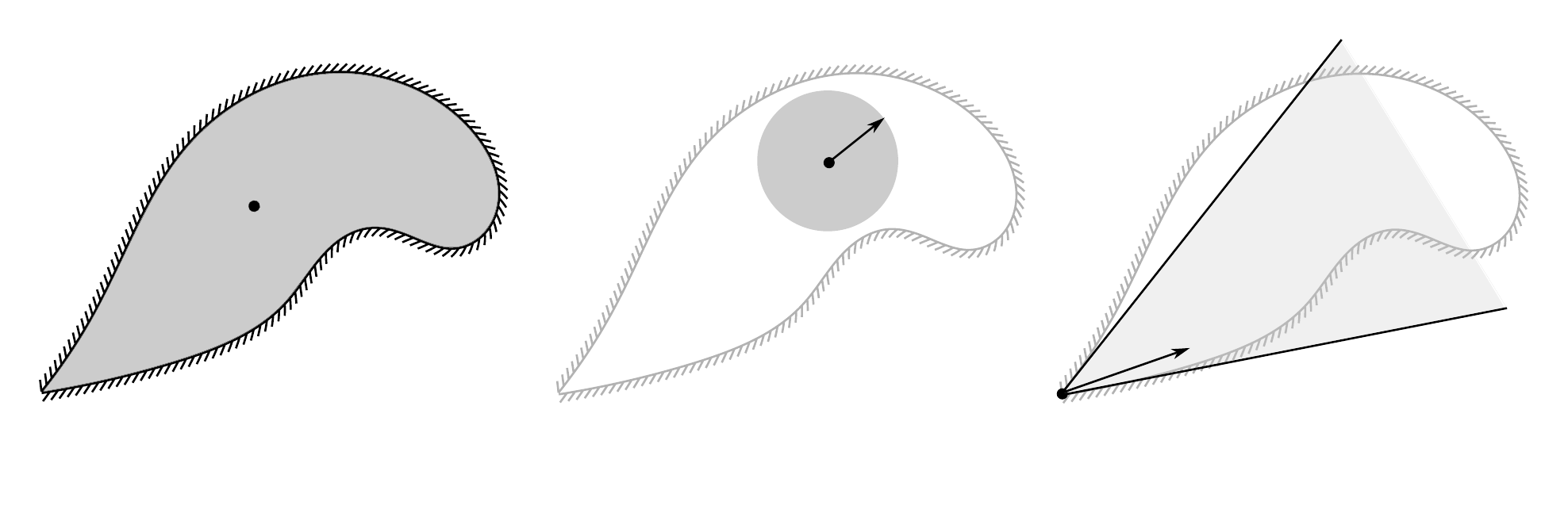}
    \caption{The figure contrasts position constraints with velocity constraints. The leftmost sketch illustrates the position constraint, where $x(t)$ is constrained to the feasible set as indicated by the shaded region. The center and right figures illustrate the induced constraints on the velocity $\dot{x}(t)^+$ (which will be precisely defined below). If $x(t)$ is in the interior of the feasible set, there are no restrictions on the forward velocity, as indicated with the shaded ball without border (center). The figure on the right illustrates the case where $x(t)$ lies on the boundary of the feasible set. As a result, $\dot{x}(t)^+$ is constrained to lie in the cone indicated by the shaded region. In the discrete-time case $\dot{x}(t)^+$ is replaced with $(x_{k+1}-x_k)/T$.}
    \label{Fig:illustration1}
\end{figure}

\paragraph{Detailed summary of the main contributions:} In order to discuss the results in greater detail, we introduce the following definition and assumption, which will hold throughout the remainder of the article.
\begin{definition} The point $x\in \mathbb{R}^n$ satisfies the \emph{Mangasarian-Fromovitz constraint qualification} if the columns of $\nabla h(x)$ are linearly independent and if there exists a vector $w\in \mathbb{R}^n$ such that $\nabla h(x)\T w =0$ and $\nabla g_{i}(x)\T w >0$ for all $i\in I_x$, where $I_x$ denotes the set of active inequality constraints at $x$, i.e., $I_x:=\{i\in \mathbb{Z}~|~g_i(x)\leq 0\}$.\footnote{We would like to emphasize that our definition of active constraints does not require constraints to have corresponding dual multipliers that are nonzero. }
\end{definition}
\begin{assumption}\label{Ass:MF} \textbf{\upshape{(standing)}}~~
The Mangasarian-Fromovitz constraint qualification is satisfied for all $x\in \mathbb{R}^n$.
\end{assumption}
From the definition of $T_C(x)$ it follows that every $\delta x\in T_C(x)$ satisfies $\nabla h(x)\delta x=0$ and $\nabla g_i(x)\T \delta x\geq 0$, for all $i\in I_{x}$. Assumption~\ref{Ass:MF} ensures that the converse is also true, which guarantees that all stationary points of \eqref{eq:fundProb} satisfy the corresponding Karush-Kuhn-Tucker conditions.
We further introduce the set
\begin{equation}
V_\alpha(x):=\{ v\in \mathbb{R}^n~|~\nabla h(x)\T v + \alpha h(x)=0,\quad \nabla g_{i}(x)\T v + \alpha g_i(x)\geq 0,~~\forall i\in I_x\}, \label{def:Va}
\end{equation}
where $\alpha\geq 0$ is a positive scalar. The role of $\alpha$ will be discussed below. As a result of the constraint qualification, the set $V_\alpha(x)$ reduces to the tangent cone $T_C(x)$ of the set $C$ for any $x\in C$. Moreover, for a fixed $x\in \mathbb{R}^n$, $V_\alpha(x)$ is a convex polyhedral set, involving only the active constraints $I_x$.

\hchange{For some of the results we will require convexity:}
\begin{assumption}\label{Ass:Conv}
\hchange{Let $C$ be convex and $f$ strongly convex with strong convexity constant $\mu>0$.}
\end{assumption}
\hchange{We will explicitly state when Assumption~\ref{Ass:Conv} will be needed.}

With the notation in place, we are ready to state our main results. We start with a general framework based on a continuous-time gradient flow which will be used as a starting point for our discrete algorithm. \hchange{The following proposition highlights that trajectories satisfying the continuous-time gradient flow converge to stationary points, even when the objective function $f$ or the set $C$ are nonconvex, or when the initial condition $x(0)$ is infeasible. The discrete algorithm that we investigate subsequently will be a simple Euler discretization of the continuous-time gradient flow.  Thus the analysis of the continuous-time flow will be important for understanding the algorithm's behavior for small step sizes in addition to its intrinsic interest.}

\begin{proposition} \label{Prop:GF} (constrained gradient flow)
Let $x:[0,\infty) \rightarrow \mathbb{R}^n$ be an absolutely continuous trajectory with a piecewise continuous derivative. Then, for any $x(0)\in C$, the following are equivalent:
\begin{align}
\dot{x}(t)&=- \nabla f(x(t)) + R(t), \quad -R(t) \in N_C(x(t)), ~\qquad \qquad \forall t\in [0,\infty)~\text{a.e.}, \label{eq:posLeveltmp}\\
\dot{x}(t)^+&=- \nabla f(x(t)) + R(t), \quad -R(t) \in \partial \psi_{V_\alpha(x(t))}(\dot{x}(t)^+), \quad \forall t\in [0,\infty),\label{eq:velLeveltmp}\\
\dot{x}(t)^+&= \argmin_{v \in V_\alpha(x(t))} \frac{1}{2} |v+\nabla f(x(t))|^2, \qquad \qquad \qquad \qquad \qquad~ \forall t\in [0,\infty), \label{eq:velLeveltmp2}
\end{align}
where $\dot{x}(t)^+$ denotes the right-hand derivative of $x$ at $t$. 

For any $x(0)\in \mathbb{R}^{n}$, \eqref{eq:velLeveltmp} and \eqref{eq:velLeveltmp2} are equivalent and lead to a unique trajectory $x(t)$ (if it exists) which is guaranteed to converge to the set of stationary points of \eqref{eq:fundProb} (for $\alpha>0$); that is, $x(t) \rightarrow C$ as $t\rightarrow \infty$ and
\begin{equation*}
\lim_{t\rightarrow \infty} |-\nabla f(x(t)) + R(t)| =0, 
\end{equation*} 
where $R(t)$ is defined in \eqref{eq:velLeveltmp}. Moreover, if the stationary points are isolated, the trajectory $x(t)$ converges to a single stationary point.

\hchange{If Assumption~\ref{Ass:Conv} (convexity) is satisfied} and $\alpha \leq 2\mu$, the trajectory satisfying \eqref{eq:velLeveltmp} and \eqref{eq:velLeveltmp2} converges exponentially:
\begin{equation}
(h(x(0)),\min\{0,g(x(0))\})\T \lambda^* e^{-\alpha t} \leq f(x(t))-f^*\leq (f(x(0))-f^*) e^{-2 \mu t},\label{eq:boundinfeas}
\end{equation}
for all $x(0)\in \mathbb{R}^n$, where $f^*$ is the value of the minimizer in \eqref{eq:fundProb} and $\lambda^*$ is a multiplier that satisfies the Karush-Kuhn-Tucker conditions \hchange{of \eqref{eq:fundProb}}.
\end{proposition}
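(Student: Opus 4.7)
The plan is to address the three assertions in order: (i) the equivalence of the three formulations for $x(0)\in C$, (ii) existence, uniqueness, and convergence to stationary points for arbitrary $x(0)\in\mathbb{R}^n$, and (iii) the exponential rates under Assumption~\ref{Ass:Conv}. The starting observation for (i) is that \eqref{eq:velLeveltmp} and \eqref{eq:velLeveltmp2} are connected by the standard characterization of projection onto a closed convex set: $\dot{x}^+$ minimizes $\tfrac{1}{2}|v+\nabla f(x)|^2$ over the polyhedron $V_\alpha(x)$ if and only if $-R=-(\dot{x}^+ + \nabla f(x))\in N_{V_\alpha(x)}(\dot{x}^+) = \partial\psi_{V_\alpha(x)}(\dot{x}^+)$. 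To bridge these to \eqref{eq:posLeveltmp}, I note that for $x\in C$ one has $h(x)=0$ and $g_i(x)=0$ for every $i\in I_x$, so $V_\alpha(x)$ collapses to the polyhedral cone $\{v:\nabla h(x)^T v=0,~\nabla g_i(x)^T v\geq 0,~\forall i\in I_x\}$, which under Assumption~\ref{Ass:MF} coincides with $T_C(x)$. Since $\dot{x}^+\in T_C(x)$ enforces viability $x(t)\in C$, the Moreau decomposition $-\nabla f=\dot{x}^+ + q$ with $q\in N_C(x)$ and $q\cdot\dot{x}^+=0$ yields both implications between the position- and velocity-level formulations.

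For the general case $x(0)\in\mathbb{R}^n$, existence and uniqueness of a right-continuous trajectory satisfying \eqref{eq:velLeveltmp2} follow from standard sweeping-process arguments, using Assumption~\ref{Ass:MF} to ensure that $V_\alpha(x)$ is non-empty and inner semicontinuous so that the projection is single-valued and piecewise continuous, with isolated switching times where $I_x$ changes. Two Lyapunov computations along the flow then yield convergence. Evaluating the defining equalities and active inequalities of $V_\alpha$ gives $\tfrac{d}{dt} h(x(t))=-\alpha h(x(t))$ and, for $i\in I_{x(t)}$, $\tfrac{d}{dt} g_i(x(t))\geq -\alpha g_i(x(t))$, so $h(x(t))=h(x(0))e^{-\alpha t}$ and $\min\{0,g(x(t))\}\geq e^{-\alpha t}\min\{0,g(x(0))\}$ componentwise, giving $x(t)\to C$ exponentially for $\alpha>0$. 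Once the trajectory reaches $C$, the variational inequality $R\cdot(v-\dot{x}^+)\geq 0$ evaluated at $v=0\in V_\alpha(x)$ forces $R\cdot\dot{x}^+\leq 0$, so $\tfrac{d}{dt} f(x) = -|\dot{x}^+|^2 + R\cdot\dot{x}^+ \leq -|\dot{x}^+|^2$; coercivity of $f$ bounds the trajectory, integrability of $|\dot{x}^+|^2$ combined with uniform continuity on tails forces $|\dot{x}^+(t)|\to 0$, which is precisely the stationarity residual $|{-\nabla f(x(t))+R(t)}|$. A LaSalle-type argument upgrades this to convergence to a single stationary point when the stationary set is discrete.

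For the exponential rates under Assumption~\ref{Ass:Conv}, the upper bound uses $V(t)=f(x(t))-f^*$ as a Lyapunov function. Once $x\in C$, the Moreau decomposition gives $R\cdot\dot{x}^+=0$ and hence $\dot V=-|\dot{x}^+|^2$; convexity of $C$ ensures $x^*-x\in T_C(x)=V_\alpha(x)$, so plugging $v=x^*-x$ into the variational inequality defining $\dot{x}^+$ and combining with $\mu$-strong convexity of $f$ yields, after a Cauchy-Schwarz and AM-GM step, the Polyak-{\L}ojasiewicz estimate $|\dot{x}^+|^2\geq 2\mu(f(x)-f^*)$; Gr\"onwall's inequality then gives the stated $e^{-2\mu t}$ decay. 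The lower bound follows from weak duality applied at $x^*$: convex $C$, strongly convex $f$, and Assumption~\ref{Ass:MF} guarantee a KKT multiplier $\lambda^*$ with $f(x)-f^*\geq (h(x),\min\{0,g(x)\})^T\lambda^*$ for all $x\in\mathbb{R}^n$ (obtained by minimizing the Lagrangian in $x$ and using $\lambda^*_g\geq 0$ to replace $g(x)$ by $\min\{0,g(x)\}$); substituting the exponential decay of $h$ and $\min\{0,g\}$ from part (ii), with $\alpha\leq 2\mu$ ensuring that the lower and upper bounds do not cross, completes the proof.

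The principal obstacle is the infeasible phase in part (ii): when $x\notin C$, one has in general $0\notin V_\alpha(x)$, so the identity $R\cdot\dot{x}^+\leq 0$ can fail and $f$ may increase transiently, meaning $f$ alone is not a valid Lyapunov function. Handling this requires a combined Lyapunov function that pairs $f$ with the (exponentially decaying) constraint residual $(h,\min\{0,g\})$, and this is precisely where the freedom to choose $\alpha>0$ becomes essential in controlling the trade-off. A secondary difficulty is the discontinuity of the set-valued map $x\mapsto V_\alpha(x)$ across changes of the active set, which forces the analysis to be carried out piecewise and right-hand derivatives to be matched carefully across switching instants.
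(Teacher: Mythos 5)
Your outline of the equivalences and your weak-duality derivation of the lower bound in \eqref{eq:boundinfeas} are sound (the paper instead invokes the sensitivity interpretation of $\lambda^*$ under perturbation of the constraint right-hand sides, but the two arguments deliver the same inequality). The genuine gap is in the convergence proof for general $x(0)\in\mathbb{R}^n$, which is the technical core of the proposition. Your Lyapunov argument is predicated on ``once the trajectory reaches $C$,'' but for an infeasible start the trajectory never reaches $C$ in finite time: by \eqref{eq:evolConstr} the residuals $h(x(t))$ and $\min\{0,g(x(t))\}$ only decay like $e^{-\alpha t}$. You correctly identify the two resulting obstacles --- that $R^{\mathsf{T}}\dot{x}^+\leq 0$ can fail off $C$ so $f$ is not a Lyapunov function, and that $\dot{x}^+$ is discontinuous so ``uniform continuity on tails'' (hence classical Barbalat) is unavailable --- but you do not resolve either of them; naming a ``combined Lyapunov function'' is not the same as producing one. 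The paper's resolution is concrete: complementary slackness in \eqref{eq:statDual} yields the exact identity $\frac{\diff}{\dt}f(x(t))^+=-|\dot{x}(t)^+|^2-\alpha\lambda(t)^{\mathsf{T}}\bar{g}(x(t))$ valid on and off $C$, one then works with the dual objective $\xi_{\text{d}}(t)=-\tfrac12|\dot{x}(t)^+|^2-\alpha\lambda(t)^{\mathsf{T}}\bar{g}(x(t))$ rather than with $f$ or $|\dot x^+|^2$ alone, uses Lemma~\ref{Lemma:cont1} (dual feasibility of $\lambda(t)$ at nearby later times) to show that $\xi_{\text{d}}$ can only jump \emph{upward} at discontinuities, and closes with the one-sided Barbalat variant of Lemma~\ref{Lemma:cont2}. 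None of this machinery appears in your proposal, and without it the claim $|\dot{x}(t)^+|\to 0$ is unsupported.

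The same feasibility confusion infects your upper exponential bound: \eqref{eq:boundinfeas} is asserted for all $x(0)\in\mathbb{R}^n$, yet your derivation assumes $R^{\mathsf{T}}\dot{x}^+=0$ and $x^*-x\in V_\alpha(x)$, both of which are specific to $x\in C$ (off $C$ the set $V_\alpha(x)$ is not a cone and the correct feasible candidate is $\alpha(x^*-x)$, not $x^*-x$). The paper instead derives $-|\dot{x}^+|^2\leq -2\mu(f(x)-f^*)+2\mu\,\lambda^{\mathsf{T}}\bar{g}(x)$ from strong convexity of the Lagrangian (Lemma~\ref{Lem:discrete2}), so that $\frac{\diff}{\dt}f^+\leq -2\mu(f-f^*)+(2\mu-\alpha)\lambda^{\mathsf{T}}\bar{g}$; the hypothesis $\alpha\leq 2\mu$ is used precisely to make the correction term $(2\mu-\alpha)\lambda^{\mathsf{T}}\bar{g}\leq 0$, not to keep the upper and lower bounds from ``crossing'' as you suggest. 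You would need to repair both the convergence argument and the rate argument along these lines before the proof is complete.
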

We make the following remarks:
\begin{itemize}
\item 
\hchange{We note that the differential inclusion \eqref{eq:posLeveltmp} is also known as a \emph{gradient inclusion}, since its right-hand side amounts to the negative subgradient of $f+\psi_C$. It has been extensively studied in the mathematical community \citep[see, e.g.,][Ch.~3]{AubinDifferentialInclusions}. 
If $C$ is convex, the subgradient of $f+\psi_C$ is maximally montone \citep[see, e.g.,][p.~354, Theorem~20.25]{ConvexAnalysisBauschke} and as a result, existence and uniqueness of absolutely continuous trajectories satisfying \eqref{eq:posLeveltmp} can be guaranteed \citep[see, e.g.,] [Theorem~10.3.1, p. 399]{AubinSetValued}.} 
\item The additional assumptions on $\dot{x}$ are used for establishing the equivalence between \eqref{eq:posLeveltmp} and \eqref{eq:velLeveltmp}. Convergence results for \eqref{eq:velLeveltmp} and \eqref{eq:velLeveltmp2} similar to those of Proposition~\ref{Prop:GF} can still be obtained when the restrictions on $\dot{x}$ are relaxed. We also note that by applying the theory of \citet{Filippov}, \eqref{eq:velLeveltmp} and \eqref{eq:velLeveltmp2} can be extended to a differential inclusion that is guaranteed to have an absolutely continuous solution. We refer the reader who is interested in existence results to the work of \citet{Filippov} and \citet{AubinDifferentialInclusions}. The equivalence between \eqref{eq:posLeveltmp} and \eqref{eq:velLeveltmp} under weaker assumptions on $\dot{x}$ is discussed in \citet{Brogliato}, which also provides a short existence proof (requiring, however, that $C$ is convex).
\item The variable $R(t)$ in \eqref{eq:posLeveltmp} can be regarded as a reaction force that imposes the constraint $x(t) \in C$ for all $t\in [0,\infty)$ (by definition, the normal cone is empty if $x(t)\not \in C$). We therefore say that \eqref{eq:posLeveltmp} includes the constraint on the position level. In contrast, the reaction force $R(t)$ in \eqref{eq:velLeveltmp} enforces $\dot{x}(t)^+ \in V_{\alpha}(x(t))$ for all $t\in [0,\infty)$, which reduces to $\dot{x}(t)^+ \in T_C(x(t))$ for $x(t)\in C$. The condition $\dot{x}(t)^+ \in V_{\alpha}(x(t))$ can be viewed as an extension of $\dot{x}(t)^+ \in T_C(x(t))$ to allow also for $x(t)\not\in C$. Interpreting \eqref{eq:velLeveltmp} as a stationarity condition for $\dot{x}(t)^+$ yields \eqref{eq:velLeveltmp2}. We therefore say that \eqref{eq:velLeveltmp} and \eqref{eq:velLeveltmp2} impose the constraints on the velocity level.
\item The intuition behind the equivalence of \eqref{eq:posLeveltmp}, \eqref{eq:velLeveltmp}, and \eqref{eq:velLeveltmp2} can be summarized in the following way. For an absolutely continuous trajectory $x(t)$, the constraint $x(t)\in C$ for all $t\in [0,\infty)$ is equivalent to $\dot{x}(t)^+ \in V_\alpha(x(t))$ for all $t\in [0,\infty)$, $x(0)\in C$~\citep[Remark 2.5]{Moreau}.\footnote{Constraint qualification is needed for the equivalence to hold.} If we think of $x(t)$ as the position of a point mass, and $\dot{x}(t)^+$ as its velocity, this can be stated as follows: A constraint on the position of the point mass induces a constraint on its velocity. Conversely, the constraint $\dot{x}(t)^+\in V_\alpha(x(t))$ on the velocity ensures that the position constraint is satisfied for all times $t\geq 0$, provided that $x(0)\in C$.
\item The reformulation \eqref{eq:velLeveltmp2} emphasizes that at each point in time, the velocity is chosen to match unconstrained gradient flow as closely as possible, subject to the velocity constraint $\dot{x}(t)^+\in V_\alpha(x(t))$.
This can be seen as an analogue of the principle of least constraint in mechanics \citep[Ch.~9]{Glocker}.
\item 
Imposing $\dot{x}(t)^+\in V_\alpha(x(t))$ for all $t\in [0,\infty)$, yields, by definition of the set $V_\alpha(x)$ and by applying Gr\"onwall's inequality,
\begin{equation}
g_i(x(t))\geq g_i(x(0)) e^{-\alpha t}, \quad i\in I_{x(0)}, \quad h(x(t))=h(x(0)) e^{-\alpha t}, \label{eq:evolConstr}
\end{equation}
for all $t\in [0,\infty)$. Consequently, the constant $\alpha$ controls how quickly the constraint violations decay. \hchange{We note that there are two competing objectives: reducing the objective function and converging to the feasible set. The variable $\alpha$ controls the tradeoff between these two objectives; for small $\alpha$, the emphasis is on reducing the objective function, for large $\alpha$, the emphasis is on converging to the feasible set. We will also see that in discrete time, $\alpha$ is required to satisfy $\alpha T \leq 1$ for guaranteeing convergence ($T$ is the step size).}
\item By reformulating the constraint on the velocity level as in \eqref{eq:velLeveltmp} and \eqref{eq:velLeveltmp2}, the velocity $\dot{x}(t)^+$ can by computed by relying on a local and linear approximation of the set $C$ via $\dot{x}(t)^+\in V_\alpha(x(t))$, which includes only the active constraints $I_{x(t)}$. Hence, even for a nonconvex optimization problem such as \eqref{eq:fundProb}, the optimization given by the right-hand side of \eqref{eq:velLeveltmp2} is convex. 
\end{itemize}

By replacing $x(t)^+$ with $(x_{k+1}-x_{k})/T$ and $x(t)$ with $x_k$ in \eqref{eq:velLeveltmp} or \eqref{eq:velLeveltmp2}, we obtain the following discrete algorithm:
\begin{equation}
x_{k+1}=x_{k} - T \nabla f(x_k) + T R_k, \quad -R_k \in \partial \psi_{V_\alpha(x_k)}((x_{k+1}-x_k)/T), \quad k=0,1,2,\dots, \label{eq:disAlgorithm}
\end{equation}
which for any $x_0 \in \mathbb{R}^n$, leads to well-defined (unique) iterates, as long as the Mangasarian-Fromovitz constraint qualification is satisfied for all $x\in \mathbb{R}^n$. As in the continuous-time setting, the discrete algorithm relies on a local approximation of the feasible set at each iteration, which includes only the active constraints $I_{x_k}$. Projections or optimization over the entire feasible set $C$ (at each iteration) are therefore avoided. While this reduces computation, it also complicates the analysis. 

It is important to note that \eqref{eq:disAlgorithm} can be reformulated in a number of equivalent ways.  The choice made in Algorithm~\ref{Alg:one} is particularly suitable for numerical implementation.

\begin{algorithm}
\begin{algorithmic}
\Require $x_0 \in \mathbb{R}^n$, TOL, MAXITER, $T>0$, $\alpha T \in (0,1]$
\State $k=0$
\While{$k<\text{MAXITER}$}
\State Determine the set of closed constraints $I_{x_k}$ 
\vspace{-2pt}\State Define $W_k:=(\nabla h(x_k), \nabla g_i(x_k)_{i\in I_{x_k}})$ and $D_k:=\mathbb{R}^{n_\text{h}} \times \mathbb{R}_{\geq 0}^{|I_{x_k}|}$
\State Define $\bar{g}_k:=(h(x_k),g_i(x_k)_{i\in I_{x_k}})$
\State Find $\lambda_k\in D_k$ such that $-\lambda_k\in \partial \psi_{D_k}(W_k\T W_k \lambda_k - W_k\T \nabla f(x_k) + \alpha \bar{g}_k)$ (see Section~\ref{Sec:FixedPoint}, \eqref{eq:statdual3})
\State Perform the update $x_{k+1} = x_k - T\nabla f(x_k) + T W_k \lambda_k$
\If{$|x_{k+1}-x_k| \leq T\cdot \text{TOL}$} 
\State \Return $x_{k+1}$
\EndIf
\State $k\leftarrow k+1$
\EndWhile
\end{algorithmic}
\caption{Implementation of the gradient descent scheme \eqref{eq:disAlgorithm}.}
\label{Alg:one}
\end{algorithm}

The following definitions will be useful for characterizing the behavior and the convergence rate of \eqref{eq:disAlgorithm}. We start by introducing the function $v:\mathbb{R}^n \rightarrow \mathbb{R}^n$, which assigns the velocity $v(x)$ to each $x\in \mathbb{R}^n$:
\begin{equation}
v(x):=\argmin_{v\in V_\alpha(x)} \frac{1}{2} |v+\nabla f(x)|^2. \label{eq:vdef}
\end{equation}
Clearly, in continuous time, \eqref{eq:velLeveltmp} and \eqref{eq:velLeveltmp2} evolve as $\dot{x}(t)^+=v(x(t))$, whereas in discrete time, \eqref{eq:disAlgorithm} imposes $(x_{k+1}-x_k)/T=v(x_k)$. As a result of the constraint qualification, strong duality holds \hchange{(see Lemma~\ref{Lemma:SD} in Section~\ref{Sec:Continuous-Time} for details)} and we obtain the following dual \hchange{(the dual corresponds to \eqref{eq:vdef} and is not directly related to \eqref{eq:fundProb})}:
\begin{equation}
d(x):=\max_{\lambda \in D_x} l(x,\lambda) - \frac{1}{2 \alpha} |\nabla_x l(x,\lambda)|^2,  \label{eq:dual2}
\end{equation}
where $\nabla_x$ denotes the gradient with respect to $x$, and the Lagrangian $l: \mathbb{R}^n \times (\mathbb{R}^{n_\text{h}} \times \mathbb{R}^{n_\text{g}}_{\geq 0}) \rightarrow \mathbb{R}$ is defined as
\begin{equation}
l(x,\lambda):= f(x)- \lambda\T \bar{g}(x), \label{eq:Lag}
\end{equation}
with $\bar{g}(x):=(h(x),g(x))$. The set $D_x$ in \eqref{eq:dual2} is given by 
\begin{equation*}
D_x:=\{ \lambda \in \mathbb{R}^{n_\text{h}} \times \mathbb{R}^{n_\text{g}}_{\geq 0} ~|~\lambda_{n_\text{h}+i}=0, \forall i\not \in I_x\},
\end{equation*}
and includes only multipliers $\lambda_i \neq 0$ that correspond to equality constraints or active inequality constraints, defined by $i\in I_x$. The multipliers $\lambda_{n_\text{h}+i}$, which correspond to inactive inequality constraints, i.e., $i\not\in I_x$, are set to zero, and can therefore be eliminated from the outset when solving \eqref{eq:dual2} (as is done in Algorithm~\ref{Alg:one}). In general, there might be multiple $\lambda \in D_x$ that attain the maximum in \eqref{eq:dual2}. We will denote any one of them by $\lambda(x)$. As a consequence of Lagrange duality, $\lambda(x)$ is related to the minimizer of \eqref{eq:vdef} by
\begin{equation}
v(x)=-\nabla_x l(x,\lambda(x))=-\nabla f(x) + \nabla\bar{g}(x)\lambda(x).
\end{equation}
We note that the variable $R(t)$ in \eqref{eq:velLeveltmp} or $R_k$ in \eqref{eq:disAlgorithm} can therefore be expressed as $\nabla \bar{g}(x(t)) \lambda(x(t))$ and $\nabla \bar{g}(x_k) \lambda(x_k)$, respectively. 

\hchange{In general, the multipliers $\lambda(x)$ that result from \eqref{eq:dual2} are different than the multipliers $\lambda^*$ that arise from the Karush-Kuhn-Tucker conditions of \eqref{eq:fundProb} and only agree when $x=x^*$.}

\hchange{The function $d$ as defined in \eqref{eq:dual2} will be important for the analysis of \eqref{eq:disAlgorithm} and it will be shown that under suitable assumptions, $d(x_k)$ is monotonically increasing in $k$ and converges to $f^*$. Moreover, if Assumption~\ref{Ass:Conv} (convexity) holds, $f^*$ is an upper bound on $d(x)$ and $f^*-d(x)$ bounds $|x-x^*|^2$, the distance of $x$ to the optimizer of \eqref{eq:fundProb}. A proof of this fact is included in Appendix~\ref{App:d} along with other properties of $d$. This makes $d$ a natural choice for evaluating the progress of \eqref{eq:disAlgorithm}. We note that neither $f(x_k)$ nor $\bar{g}(x_k)$ alone are suitable, since these are not monotonic in $k$. Indeed, since we allow for infeasible iterates, $f(x_k)$ might increase over the course of the optimization and different constraints in $\bar{g}(x_k)$ might turn on and off.}

The maximum curvature of $f$ (the Lipschitz constant of $\nabla f$) limits the maximum admissible step size of gradient descent in the unconstrained case. We will see that the maximum curvature of $l(\cdot,\lambda)$ (for a fixed $\lambda$) will play a similar role for \eqref{eq:disAlgorithm}. \hchange{We denote by $\bar{\mu}_l(\lambda)$ and $\bar{L}_l(\lambda)$ the strong convexity and smoothness constant of $l(\cdot,\lambda): \mathbb{R}^n \rightarrow \mathbb{R}^n$ (for a fixed $\lambda$).} In case $C$ is convex and $f$ is strongly convex, the strong convexity constant $\mu$ of $f$ is a natural lower bound for $\bar{\mu}(\lambda)$, $\lambda \in \mathbb{R}^{n_\text{h}}\times \mathbb{R}^{n_\text{g}}_{\geq 0}$, which is attained for $\lambda=0$.

With this notation in place, we are now ready to state the main results that characterize \eqref{eq:disAlgorithm}.

\begin{proposition}\label{Prop:GD} (constrained gradient descent)
\hchangeII{Let Assumption~\ref{Ass:Conv} (convexity) be satisfied.} Then, for any $x_0\in \mathbb{R}^n$, the iterates $x_k$ of \eqref{eq:disAlgorithm} are well-defined (unique) and guaranteed to converge to the minimizer of \eqref{eq:fundProb} for 
\begin{equation*}
    T \leq \frac{2}{L_l+\mu}, \quad \alpha < \mu,
\end{equation*}
where $L_l$ is such that $L_l\geq \bar{L}_l(\lambda(x))$ for all $x\in \mathbb{R}^n$.\footnote{In case $g$ is affine, $L_l=L$, where $L$ is the smoothness constant of $f$.} The sequence $d(x_k)$ is monotonically increasing in $k$ and converges to $f^*$.

The velocity $(x_{k+1}-x_k)/T$ converges with
\begin{align*}
    \min_{j\in \{0,1,\dots,k\}} |-\nabla f(x_j)+R_j|^2 \leq \frac{f^*-d(x_0)}{c_1 (k+1)}, \quad \forall k\geq 0, \quad \forall x_0\in \mathbb{R}^n,
\end{align*}
where $c_1=T(\mu/\alpha -1) (1-\mu T/2)>0$ is constant, and for every $x_0 \in \mathbb{R}^n$ there exists a constant $N$ large enough such that
\begin{equation*}
    |-\nabla f(x_k) + R_k|^2\leq \frac{2}{c_1} (1-c_2 T)^{k-N} (f^*-d(x_{N})),
\end{equation*}
\hchange{where $c_2=2 \alpha (1-\mu T/2) (\mu-\alpha)/(L_l-\alpha)>0$ is constant.} \hchange{Similar bounds hold for the iterates $x_k$, that is,}
\begin{equation*}
    \min_{j\in \{0,1,\dots,k\}} |x^*-x_j|^2 \leq \frac{L_l/\alpha -1}{c_1 (\mu-\alpha)} ~  \frac{f^*-d(x_0)}{k+1}, \quad \forall k\geq N,
\end{equation*}
\hchange{and}
\begin{equation*}
    |x^*-x_k|^2 \leq \frac{2 (L_\text{l}/\alpha -1)}{c_1 (\mu-\alpha)} (1-c_2 T)^{k-N} (f^*-d(x_N)), \quad \forall k\geq N.
\end{equation*}
\end{proposition}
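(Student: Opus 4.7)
The plan is to use $f^* - d(x)$ as a Lyapunov function and to establish a per-step inequality of the form $d(x_{k+1}) - d(x_k) \geq c_1 |v(x_k)|^2$, from which every claim in the proposition follows either by telescoping or by coupling with a Polyak--{\L}ojasiewicz-type bound on $d$. Well-definedness is the easy piece: Assumption~\ref{Ass:MF} makes $V_\alpha(x_k)$ a non-empty closed convex polyhedron, so the strictly convex quadratic program~\eqref{eq:vdef} has a unique minimiser $v(x_k)$, and strong duality (Lemma~\ref{Lemma:SD}) provides at least one $\lambda_k \in D_{x_k}$ attaining the maximum in~\eqref{eq:dual2}; the multiplier need not be unique, but $R_k = \nabla \bar g(x_k)\lambda_k$ is.

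The heart of the proof is the one-step inequality. Writing $d(x_k) = l(x_k,\lambda_k) - \tfrac{1}{2\alpha}|v(x_k)|^2$ via $\nabla_x l(x_k,\lambda_k) = -v(x_k)$, I would lower-bound $d(x_{k+1})$ through weak duality using a carefully chosen $\hat\lambda \in D_{x_{k+1}}$. Since the active set $I_{x_k}$ can differ from $I_{x_{k+1}}$, $\lambda_k$ need not be admissible at $x_{k+1}$, so I define $\hat\lambda$ by zeroing out the components of $\lambda_k$ indexed by $i \notin I_{x_{k+1}}$; as $\lambda_{k,i} \geq 0$ and $g_i(x_{k+1}) > 0$ for all such $i$, this zeroing only \emph{increases} $l(x_{k+1},\cdot)$, while the associated perturbation of $\nabla_x l(x_{k+1},\cdot)$ is controllable because the step-size condition $\alpha T \leq 1$, combined with the velocity constraint $\nabla g_i(x_k)^\top v(x_k) + \alpha g_i(x_k) \geq 0$, bounds how rapidly constraints can transition between active and inactive. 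Applying $L_l$-smoothness of $l(\cdot,\hat\lambda)$ and $\mu$-strong convexity along the increment $x_{k+1} - x_k = T v(x_k)$, and optimising the resulting coefficients over $T \leq 2/(L_l+\mu)$ and $\alpha < \mu$, produces $c_1 = T(\mu/\alpha - 1)(1 - \mu T/2)$, which is positive under the stated step size since then $\mu > \alpha$ and $\mu T \leq 1$.

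The remaining statements follow mechanically. Telescoping the descent inequality yields monotonicity of $d(x_k)$, the convergence $d(x_k) \to f^*$ (using $d \leq f^*$ from Appendix~\ref{App:d}), and the sublinear bound on $\min_{j\leq k}|v(x_j)|^2 = \min_{j\leq k}|-\nabla f(x_j) + R_j|^2$. For the geometric rate I would establish a PL-type bound of the form $f^* - d(x) \leq \tfrac{L_l - \alpha}{2\alpha^2}|v(x)|^2$ from the explicit form of $d$, the KKT conditions for~\eqref{eq:fundProb}, and strong convexity of $l(\cdot,\lambda)$; plugged into the descent inequality, this yields $f^* - d(x_{k+1}) \leq (1 - c_2 T)(f^* - d(x_k))$ with exactly the advertised $c_2 = 2\alpha(1 - \mu T/2)(\mu - \alpha)/(L_l - \alpha)$. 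The threshold $N$ reflects that the PL-type bound may require the active set to have settled near its terminal configuration, an eventuality that the continuous analysis of Proposition~\ref{Prop:GF} makes plausible in finite iteration count. The iterate bounds then come from combining these velocity estimates with the strong-convexity inequality $|x - x^*|^2 \leq \tfrac{L_l/\alpha - 1}{\mu - \alpha}|v(x)|^2$, obtained by comparing $l(\cdot, \lambda(x))$ at $x$ with $l(\cdot, \lambda^*)$ at $x^*$ via the KKT conditions of~\eqref{eq:fundProb}.

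The main obstacle is unquestionably the descent step. The discontinuous evolution of the active set blocks the textbook strategy of plugging $\lambda_k$ straight into the dual at $x_{k+1}$, and Assumption~\ref{Ass:Conv} together with the restriction $\alpha T \leq 1$ are both used in essential ways to bound the damage done by the passage $\lambda_k \to \hat\lambda$. Once that bookkeeping is in place, the rest of the argument closely mirrors the classical analysis of gradient descent on a smooth strongly convex function, with $l(\cdot,\lambda(x_k))$ playing the role of a slowly varying smooth strongly convex surrogate.
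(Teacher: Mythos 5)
Your overall architecture coincides with the paper's: $d$ as a (discontinuous) Lyapunov function, dual feasibility of the multiplier at the next iterate via $\alpha T\leq 1$ and convexity, a one-step descent inequality with the constant $c_1=T(\mu/\alpha-1)(1-\mu T/2)$ obtained from strong convexity and smoothness of $l(\cdot,\lambda_k)$ along $x_{k+1}-x_k=Tv(x_k)$, telescoping for the sublinear rate, and a quadratic-growth/PL-type coupling for the linear rate. One simplification you are missing: your surrogate $\hat\lambda$ is vacuous. Lemma~\ref{Lem:discrete} shows that under $\alpha T\leq 1$ and convexity of $C$, complementary slackness forces $\lambda_{ki}\bar g_i(x_{k+1})\leq(1-\alpha T)\lambda_{ki}\bar g_i(x_k)\leq 0$, so every constraint with $\lambda_{ki}>0$ is still active at $k+1$ and $\lambda_k\in D_{x_{k+1}}$ already; there is no perturbation of $\nabla_x l$ to control, and budgeting for one would only weaken your constants.

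The genuine gap is in the linear-rate and iterate-convergence part. The inequality $f^*-d(x)\leq\frac{L_l-\alpha}{2\alpha^2}|v(x)|^2$ that you propose is false in general: the correct bound (Lemma~\ref{Lem:discrete2}) is $f^*_{I_x}-d(x)\leq\frac{L_l-\alpha}{2\alpha^2}|v(x)|^2$, where $f^*_{I_x}\leq f^*$ is the optimal value of the relaxed problem retaining only the currently active inequality constraints. When constraints that are active at $x^*$ are inactive at $x$, the gap $f^*-d(x)$ can be arbitrarily larger than $|v(x)|^2$ (the one-dimensional example of Section~\ref{Sec:Example} exhibits $d$ unbounded below on the region where the binding constraint is open). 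The same issue afflicts your bound $|x-x^*|^2\lesssim|v(x)|^2$, which holds only with $x^*_{I_x}$ in place of $x^*$. You acknowledge that the PL bound ``may require the active set to have settled,'' but calling that ``plausible'' from the continuous-time analysis is not a proof, and it is precisely here that the paper does real work: it orders the finitely many values $f^*_I$ into nested level sets $S_0\supset\dots\supset S_q$ of $d$, shows each is invariant (by the descent inequality) and attractive at rate $1-c_2T$ toward the next threshold $f^*_{j+1}$, and then proves by contradiction that each $S_j\setminus S_{j+1}$ with $j<q-1$ is traversed in finite time, using that the exponentially decaying velocity makes $x_k$ Cauchy and that upper semicontinuity of $d$ forces the limit to satisfy $v(\bar x)=0$, hence $\bar x=x^*$, contradicting $d(x_k)<f^*_{j+1}$. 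Only this yields the finite $N$, the eventual identification $I_{x_k}=I_{x^*}$ (so that $x^*_{I_{x_k}}=x^*$), and hence the stated bounds on $|x_k-x^*|$. Without that level-set argument your proof establishes monotonicity of $d(x_k)$, summability of $|v_k|^2$, and the $\mathcal{O}(1/k)$ velocity bound, but not convergence of $x_k$ to $x^*$ nor the geometric rates.
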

The following remarks are important:
\begin{itemize}
\item Algorithm \eqref{eq:disAlgorithm} does not anticipate any constraints that could potentially be violated at future iterations \hchangeII{(since $V_\alpha(x)$ is a local approximation of $C$, see Figure~\ref{Fig:illustration1}, that involves only $i\in I_{x}$)}. Unlike in the continuous-time case, where constraint violations decrease exponentially over time (see \eqref{eq:evolConstr}), a constraint may therefore open up, and close again a few iterations later. Nevertheless, the algorithm is guaranteed to converge at nearly a linear rate, which we find remarkable.
\item The convergence rate is dimension independent, which distinguishes the algorithm from interior-point methods, for example, where $\mathcal{O}(\sqrt{n_\text{g}})$ Newton-iterations are required to decrease the value of the objective function by a constant factor.
\item In the important special case where constraints are affine, all the above results hold for $L_l=L$, where $L$ is the smoothness constant of $f$. The constant $L_l$ is related to the maximum curvature of the Lagrangian, which seems a natural generalization from the unconstrained to the constrained case. 
\item Another important special case is given for a single nonlinear inequality constraint ($n_\text{g}=1$, $n_\text{h}=0$). We then obtain
\begin{equation*}
\lambda(x)=\begin{cases} \frac{\nabla g(x)\T \nabla f(x)-\alpha g(x)}{|\nabla g(x)|^2} &\text{for}~g(x)\leq 0,~~\nabla g(x)\T \nabla f(x)-\alpha g(x)\geq 0,\\
0 &\text{otherwise}.\end{cases}
\end{equation*}
In this case, the constant $L_l$ is given by the largest eigenvalue of the Hessian $\diff^2 l/\diff x^2=\diff^2 f/\diff x^2 - \lambda(x)~\diff^2 g/\diff x^2$ over all $x\in \mathbb{R}^n$.
\item The restriction $\alpha < \mu$ on the constant $\alpha$ is likely to be conservative. We observed in numerical experiments that a choice $\alpha T$ close to unity yields faster convergence. The restriction $\alpha T\leq 1$ is, however, necessary for convergence.
\item The convergence analysis will point to immediate extensions and variants of \eqref{eq:disAlgorithm}, which include line-search strategies, or alternations between gradient updates of the Lagrangian with fixed multipliers (which are computationally inexpensive) and updates of the multipliers according to \eqref{eq:dual2}. These extensions will be discussed in Section~\ref{Sec:Extension}.
\item \hchange{The results show that velocity and position converge at an exponential rate for large $k$. In fact, more detailed exponential convergence results that also apply to small $k$ will be derived in Section~\ref{Sec:Discrete}. However, as a result of the discontinuities in \eqref{eq:disAlgorithm} (constraints are not anticipated), these are more complicated to state and will require additional notation.}

\item \hchange{When choosing $\alpha=\mu/2$, for example, the exponential rate of convergence, $c_2 T$, scales with $(\mu/L_{l})^2$ for large $L_{l}/\mu$. This is in contrast to projected gradient descent, where the rate scales with $\mu/L$ for large $L/\mu$, where $L$ is the smoothness constant of $f$. This is, however, an artefact of the analysis and a simple argument (see Appendix~\ref{App:AsymptoticRate}) provides a tighter asymptotic rate of convergence of \eqref{eq:disAlgorithm}, which scales in fact with $\mu/L_{l}$ for large $L_{l}/\mu$.}

\item \hchange{By following the analysis of Appendix~\ref{App:ProofLambda}, we conclude that for a single strongly concave constraint function $g$, the constant $L_{l}$ is of the form $L (1+L_g/\mu_g~\text{const})$, where $\mu_g$ denotes the strong convexity constant of $-g$, $L_g$ the smoothness constant of $-g$, and $\text{const}$ is independent of $L, \mu, \mu_g$, and $L_g$. This means that $L_{l}$ is affected by the ratio $L_g/\mu_g$,} \hchangeII{which matches our intuition, since for larger $L_g/\mu_g$, the approximation quality of our local, sparse convex approximations of the feasible set deteriorates.}
\end{itemize}

The remainder of the article is concerned with proving Proposition~\ref{Prop:GF} and Proposition~\ref{Prop:GD}, providing context for both algorithms, discussing a particular implementation of Algorithm~\ref{Alg:one}, and illustrating the algorithms with numerical examples.
\section{The Continuous-Time Case}\label{Sec:Continuous-Time}
\hchange{The following section is concerned with proving Proposition~\ref{Prop:GF}. This will be done in several smaller steps, which are presented in the following subsections. Each part will be important to understand the continuous-time gradient flow \eqref{eq:velLeveltmp2} and its discrete-time counterpart \eqref{eq:disAlgorithm}.}

\subsection{Equivalences between position and velocity constraints in continuous time}
As mentioned in Section~\ref{Sec:Summary}, the constraint $x(t)\in C$ for all $t\in [0,\infty)$ can be reformulated as a constraint on the velocity, i.e., $\dot{x}(t)^+\in V_\alpha(x(t))$. This forms the basis for the equivalence between \eqref{eq:posLeveltmp} and \eqref{eq:velLeveltmp}:
\begin{proposition}\label{Prop:velLevel} (Similar to \citet[Prop.~5.1]{Moreau}, \citet[Ch.~7]{Glocker}) Let $x: [0,\infty) \rightarrow \mathbb{R}^n$, $x(0) \in C$, be an absolutely continuous trajectory that has a piecewise continuous derivative. Then, $x(t)$ satisfies \eqref{eq:posLeveltmp} if and only if it satisfies \eqref{eq:velLeveltmp}:
\begin{equation*}
\dot{x}(t)^+=-\nabla f(x(t)) + R(t), \quad -R(t)\in \partial \psi_{V_\alpha(x(t))}(\dot{x}(t)^+), \quad \forall t\in [0,\infty).
\end{equation*}
\end{proposition}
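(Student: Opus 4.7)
The plan is to prove both implications by leveraging two ingredients already developed in the paper: the Mangasarian--Fromovitz constraint qualification (Assumption~\ref{Ass:MF}), which guarantees $V_\alpha(x) = T_C(x)$ whenever $x \in C$, and the Gronwall-type bound sketched in the remark containing \eqref{eq:evolConstr}, which controls the evolution of $g\circ x$ and $h\circ x$ along any trajectory obeying a velocity constraint. For the direction \eqref{eq:posLeveltmp} $\Rightarrow$ \eqref{eq:velLeveltmp}, I would first observe that $-R(t) \in N_C(x(t))$ is possible only when $x(t) \in C$ (the normal cone being empty otherwise), so combining the almost-everywhere condition with $x(0) \in C$ and absolute continuity forces $x(t) \in C$ for every $t \geq 0$. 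Consequently $V_\alpha(x(t)) = T_C(x(t))$, and applying Clarke's definition to the sequence $x(t+t_j) \in C$ with $t_j \downarrow 0$ yields $\dot{x}(t)^+ \in T_C(x(t))$. What remains is $-R(t) \in N_{V_\alpha(x(t))}(\dot{x}(t)^+)$, which by the standard identity $N_K(v) = K^\circ \cap \{v\}^\perp$ for a convex cone $K$ and $v \in K$ reduces to $-R(t) \in N_C(x(t))$ together with the orthogonality $R(t)\T \dot{x}(t)^+ = 0$. The orthogonality is extracted from the piecewise continuity of $\dot{x}$: at points of continuity $\dot{x}(t)^+ = \dot{x}(t)^-$, and Clarke's definition applied to the backward sequence $x(t-t_j) \in C$ yields $-\dot{x}(t)^- \in T_C(x(t))$, so that both $\pm \dot{x}(t)^+$ lie in $T_C(x(t))$; pairing these with $-R(t) \in N_C(x(t))$ forces $R(t)\T \dot{x}(t)^+ = 0$ at every point of continuity of $\dot{x}$. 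Setting $R(t) := \dot{x}(t)^+ + \nabla f(x(t))$ pointwise then supplies the reaction force required in \eqref{eq:velLeveltmp}.

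For the converse, \eqref{eq:velLeveltmp} $\Rightarrow$ \eqref{eq:posLeveltmp}, the inclusion $-R(t) \in \partial \psi_{V_\alpha(x(t))}(\dot{x}(t)^+)$ encodes both $\dot{x}(t)^+ \in V_\alpha(x(t))$ and a normal-cone condition. Inserting the defining inequalities $\nabla g_i(x(t))\T \dot{x}(t)^+ + \alpha g_i(x(t)) \geq 0$ for $i \in I_{x(t)}$ and $\nabla h(x(t))\T \dot{x}(t)^+ + \alpha h(x(t)) = 0$ into the Gronwall argument behind \eqref{eq:evolConstr} yields $g_i(x(t)) \geq 0$ for every $i$ and $h(x(t)) = 0$ for all $t \geq 0$ (one must handle initially inactive constraints by splitting time according to the first instant they become active and reusing the argument; constraint qualification rules out degenerate pile-ups). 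Thus $x(t) \in C$ and $V_\alpha(x(t)) = T_C(x(t))$, and the inclusion $N_{T_C(x(t))}(\dot{x}(t)^+) \subseteq N_C(x(t))$ hands over $-R(t) \in N_C(x(t))$. Equation \eqref{eq:posLeveltmp} then follows almost everywhere from $\dot{x}(t) = \dot{x}(t)^+$ at points of continuity.

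The main obstacle is the orthogonality step in the forward direction: the condition $-R \in N_C(x)$ is a priori strictly weaker than $-R \in N_{T_C(x)}(\dot{x}(t)^+)$, and closing this gap requires tangent information from both the right and the left limits of $\dot{x}$. This is exactly what the piecewise-continuity assumption on $\dot{x}$ provides. Once orthogonality is in hand, both implications collapse to the constraint qualification, the Gronwall bound \eqref{eq:evolConstr}, and the elementary convex-analytic identity $N_K(v) = K^\circ \cap \{v\}^\perp$ for convex cones.
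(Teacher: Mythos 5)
Your proposal follows essentially the same route as the paper's proof: both directions rest on (i) propagating feasibility so that $V_\alpha(x(t))=T_C(x(t))$ for all $t$, (ii) the two-sided tangent argument at continuity points of $\dot{x}$ (both $\dot{x}(t)$ and $-\dot{x}(t)$ lie in $T_C(x(t))$, which together with $-R(t)\in N_C(x(t))$ forces $R(t)\T \dot{x}(t)=0$), and (iii) the convex-cone identity $\partial\psi_K(v)=N_K(v)=K^\circ\cap\{v\}^\perp$, which the paper phrases instead as a scaling argument on $-R(t)\T(v-\dot{x}(t)^+)\leq 0$. Your Gr\"onwall-based feasibility argument in the converse direction is interchangeable with the paper's integral/contradiction argument, and the inclusion $N_{T_C(x)}(v)\subseteq N_C(x)$ you invoke is exactly what the paper's rescaling step establishes.

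The one step you leave open is in the direction \eqref{eq:posLeveltmp} $\Rightarrow$ \eqref{eq:velLeveltmp}: your orthogonality $R(t)\T\dot{x}(t)^+=0$, and hence the subdifferential inclusion, is obtained only at points where $\dot{x}$ is continuous, whereas \eqref{eq:velLeveltmp} is asserted for \emph{all} $t\in[0,\infty)$, including the (finitely many per bounded interval) instants where $\dot{x}$ jumps; there $\dot{x}(t_0)^+\neq\dot{x}(t_0)^-$ and the two-sided tangent argument is unavailable. The paper closes this by observing that the inclusion holds on every open interval of continuity and then passing to the right-hand limit $t\downarrow t_0$ at a discontinuity instant $t_0$, using continuity of $x$ and the fact that $\dot{x}(t)\rightarrow\dot{x}(t_0)^+$. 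You should add this limiting step (or an equivalent argument) to cover the jump instants; with that addition the proof is complete and matches the paper's.
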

\begin{proof}
The proof is adapted from \citet[Prop.~5.1]{Moreau}. We start by assuming that $x(t)$ satisfies \eqref{eq:velLeveltmp}. The fact that the subdifferential of the indicator function is non-empty implies that $\dot{x}(t)^+\in V_\alpha(x(t))$ for all $t\in [0,\infty)$. Combined with $x(0)\in C$, we therefore have $x(t)\in C$ for all $t\in [0,\infty)$, and $V_\alpha(x(t))=T_C(x(t))$. \hchange{This follows by contradiction: Let $t$ be such that $\bar{g}_i(x(t))<0$. Then, by continuity of $\bar{g}_i(x(t))$ and the fact that $x(0)\in C$, there exists $0\leq t_0 < t$ such that $\bar{g}_i(x(t_0))=0$ and $\bar{g}_i(x(s))<0$ for all $s\in (t_0,t]$. This also means that}
\begin{equation*}
    \bar{g}_i(x(t))=\int_{t_0}^{t} \nabla \bar{g}_i(x(s))\T \dot{x}(s)^+ \text{d} s \geq 0,
\end{equation*}
\hchange{since, by virtue of $\dot{x}(s)^+ \in V_\alpha(x(s))$ for all $s\in [t_0,t]$, the integrand is guaranteed to be non-negative. This leads to the desired contradiction and ensures that $x(t)\in C$ for all $t\in [0,\infty)$.}
In addition, it follows from the definition of the subdifferential that $-R(t)\T (v-\dot{x}(t)^+) \leq 0$ for all $v\in T_C(x(t))$. Due to the fact that $T_C(x(t))$ is a cone, this implies $-R(t)\T v \leq 0$ for all $v\in T_C(x(t))$ (otherwise we could derive a contradiction by scaling an appropriate $v\in T_C(x(t))$), or in other words, $-R(t)\in N_C(x(t))$. This shows that any $x(t)$ with $x(0)\in C$ satisfying \eqref{eq:velLeveltmp} also satisfies \eqref{eq:posLeveltmp}.

In order to show the converse we start by assuming that $x(t)$ satisfies \eqref{eq:posLeveltmp}. We consider any interval $(t_0,t_1)$ where $\dot{x}(t)$ is continuous. By definition of the tangent cone, we have $\lim_{\dt \rightarrow 0} (x(t+\dt)-x(t))/\dt =\dot{x}(t) \in T_C(x(t))$ and $\lim_{\dt \rightarrow 0} (x(t-\dt)-x(t))=-\dot{x}(t) \in T_C(x(t))$ for all $t\in (t_0,t_1)$. Thus, from $-R(t)\in N_C(x(t))$ it follows that $-R(t)\T \dot{x}(t)\leq 0$ and $R(t)\T \dot{x}(t)\leq 0$, which implies that $-R(t)\T \dot{x}(t)=0$ for all $t\in (t_0,t_1)$. In addition, by definition of the normal cone, it follows that $-R(t)\T v \leq 0$ for all $v\in T_C(x(t))$. Combining these two facts results in $-R(t)\T (v-\dot{x}(t)) \leq 0$ for all $v\in T_C(x(t))$ and all $t\in (t_0,t_1)$. Hence, $-R(t)\in \partial \psi_{T_C(x(t))}(\dot{x}(t))$ for all $t\in (t_0,t_1)$, which implies \eqref{eq:velLeveltmp} for any time interval where $\dot{x}(t)$ is continuous. By taking the right-limit $t \downarrow t_0$, we conclude that $-R(t_0)^+\in\partial \psi_{T_C(x(t_0))}(\dot{x}(t_0)^+)$, $\dot{x}(t_0)^+ = -\nabla f(x(t_0)) + R(t_0)^+$, since $x(t)$ is continuous. Thus, \eqref{eq:velLeveltmp} holds for $t=t_0$, and therefore also at any other time instant where $\dot{x}(t)$ is discontinuous.
\end{proof}
Three important points are worth mentioning:
\begin{itemize}
\item The piecewise continuity assumptions on $\dot{x}$ are only used for showing that \eqref{eq:posLeveltmp} implies \eqref{eq:velLeveltmp}; absolute continuity of $x$ is enough for the converse to hold (provided the constraint qualifications are satisfied). \item When the solution $x(t)$ slides along the boundary of the constraint ($\dot{x}(t)$ is continuous), the reaction force is necessarily orthogonal to the velocity. From the point of view of classical mechanics, this means that the constraint reaction forces are passive and do not exert any power (at almost every time instant). This directly implies that the function $f(x(t))$ necessarily decreases along the trajectories of \eqref{eq:posLeveltmp}.
\item The condition \eqref{eq:velLeveltmp} describes the forward evolution of $x(t)$ by prescribing the right-hand derivative of $\dot{x}$ at each point in time. An equivalent formulation for the backwards evolution also exists. We will concentrate on the forward evolution, since we are interested in minimizing $f$.
\end{itemize}

The above proposition proves the equivalence between \eqref{eq:posLeveltmp} and \eqref{eq:velLeveltmp} as stated in Proposition~\ref{Prop:GF}. \hchange{For proving the convergence results of Proposition~\ref{Prop:GF}, the following intermediate steps will be useful.}

\subsection{Intermediate results}
\hchange{The first result establishes strong duality between \eqref{eq:vdef} and \eqref{eq:dual2} and summarizes the stationarity condition of \eqref{eq:dual2}, while the second result points to an important property of the multipliers $\lambda(t)$.}

\begin{lemma}\label{Lemma:SD}
\hchange{Strong duality holds for \eqref{eq:vdef}. For $\alpha\geq 0$, the dual of \eqref{eq:vdef} can be restated as}
\begin{equation}
    \max_{\lambda \in D_{x}} -\frac{1}{2} |\nabla \bar{g} (x) \lambda -\nabla f(x)|^2 - \alpha \lambda\T \bar{g}(x), \label{eq:dual}
\end{equation}
\hchange{and as a result, $\lambda(x)$ satisfies the following stationarity conditions}:
\begin{equation}
\nabla \bar{g}(x)\T \nabla \bar{g}(x) \lambda(x) - \nabla \bar{g}(x)\T \nabla f(x) + \alpha \bar{g}(x) \in \partial \psi_{D_{x}}(\lambda(x)). \label{eq:statDual}
\end{equation}
\end{lemma}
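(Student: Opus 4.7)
The plan is to treat \eqref{eq:vdef} as a convex quadratic program whose objective is strictly convex in $v$ (Hessian equal to the identity) and whose feasible set $V_\alpha(x)$ is defined by affine equalities and inequalities. Before invoking duality, I would verify that $V_\alpha(x)$ is non-empty for every $x\in\mathbb{R}^n$: the linear independence of the columns of $\nabla h(x)$ (Assumption~\ref{Ass:MF}) lets us solve $\nabla h(x)^{\TT} v_1=-\alpha h(x)$, and adding a sufficiently large positive multiple of the Mangasarian--Fromovitz direction $w$ produces a vector in $V_\alpha(x)$. Because the primal constraints are affine and the objective is strongly convex, classical convex-QP duality immediately yields strong duality for \eqref{eq:vdef}.

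Next I would derive the dual explicitly by forming the Lagrangian with $\lambda\in D_x$ (so that multipliers for inactive inequalities are fixed at zero and active-inequality multipliers are non-negative),
\[
L(v,\lambda)=\tfrac12\,|v+\nabla f(x)|^{2}-\lambda^{\TT}\bigl(\nabla\bar g(x)^{\TT} v+\alpha\bar g(x)\bigr).
\]
The stationary-in-$v$ condition $\nabla_{v}L=0$ gives the closed-form minimizer $v^{\star}(\lambda)=\nabla\bar g(x)\lambda-\nabla f(x)$; substituting back and completing the square produces
\[
\min_{v} L(v,\lambda)=-\tfrac12\bigl|\nabla\bar g(x)\lambda-\nabla f(x)\bigr|^{2}+\tfrac12\,|\nabla f(x)|^{2}-\alpha\lambda^{\TT}\bar g(x).
\]
The $\tfrac12|\nabla f(x)|^{2}$ term is independent of $\lambda$ and therefore drops out of the maximization over $D_x$, leaving exactly \eqref{eq:dual}.

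Finally, to obtain \eqref{eq:statDual} I would apply first-order optimality to the concave dual \eqref{eq:dual} over the convex polyhedral cone $D_x$. Differentiating the dual objective in $\lambda$ yields $-\bigl[\nabla\bar g(x)^{\TT}\nabla\bar g(x)\lambda-\nabla\bar g(x)^{\TT}\nabla f(x)+\alpha\bar g(x)\bigr]$, and the standard maximum principle for a concave function over a convex set places the negative of this gradient in the normal cone $\partial\psi_{D_x}(\lambda(x))$; this is precisely \eqref{eq:statDual}.

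I do not anticipate a serious obstacle: the entire argument is a routine application of convex-QP duality together with first-order optimality. The one place where the paper's machinery is genuinely needed is in checking non-emptiness of $V_\alpha(x)$, which is exactly where Assumption~\ref{Ass:MF} enters; everything downstream is book-keeping.
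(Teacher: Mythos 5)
Your proposal follows essentially the same route as the paper: you construct the same feasible point $\bar v+\xi w$ from Assumption~\ref{Ass:MF} that the paper uses to verify Slater's condition (the paper insists on strict feasibility of the active inequalities, whereas you invoke the affine-constraint refinement of strong duality, which is an immaterial difference here), and the explicit Lagrangian computation and first-order condition are exactly the steps the paper dismisses as ``immediate.'' One small caveat: the first-order condition for a \emph{maximum} of a concave function over a convex set places the gradient itself, not its negative, in $\partial\psi_{D_x}(\lambda(x))$, which gives $-\bigl[\nabla\bar g(x)\T\nabla\bar g(x)\lambda(x)-\nabla\bar g(x)\T\nabla f(x)+\alpha\bar g(x)\bigr]\in\partial\psi_{D_x}(\lambda(x))$ — the form actually used later in \eqref{eq:statdual} — so your ``negative of the gradient'' reproduces the sign of \eqref{eq:statDual} as printed only by a compensating slip; since $D_x$ is a cone and only the componentwise complementarity is used downstream, nothing breaks, but the principle should be stated with the correct sign.
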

\begin{proof}
\hchange{We start by showing that Slater's condition holds for $V_\alpha(x)$ as a consequence of the constraint qualification; i.e., for any $x\in \mathbb{R}^n$, there exists a $v\in \mathbb{R}^n$ such that $\nabla h(x)\T v+ \alpha h(x)=0$ and $\nabla g_{i}(x)\T v + \alpha g_{i}(x)>0$ for all $i\in I_x$. }

We pick a $\bar{v}\in \mathbb{R}^n$ such that $\nabla h(x)\T \bar{v}=-\alpha h(x)$. Due to the fact that the columns of $\nabla h(x)$ are linearly independent (see Assumption~\ref{Ass:MF}), such a $\bar{v}$ exists. Thus, for a sufficiently large constant $\xi>0$, we have that $\nabla h(x)\T (\bar{v}+\xi w) =-\alpha h(x)$, $\nabla g_i(x)\T (\bar{v}+ \xi w) > -\alpha g_{i}(x)$ for all $i\in I_x$, where $w\in \mathbb{R}^n$ satisfies $\nabla h(x)\T w = 0$ and $\nabla g_i(x)\T w > 0$ for all $i\in I_x$. By assumption (see Assumption~\ref{Ass:MF}) such a $w$ exists. Thus, $v=\bar{v}+ \xi w$ satisfies the required conditions.

\hchange{Strong duality follows from the fact that \eqref{eq:vdef} is convex and Slater's condition holds. The rest is immediate.}
\end{proof}
%
%
\hchange{The following lemma establishes that $\lambda(t)$ is a feasible candidate for the dual \eqref{eq:dual2} (or \eqref{eq:dual}) at time $t_0>t$ provided that $t$ is close enough to $t_0$. A similar discrete-time result will be derived in Section~\ref{Sec:Discrete}. These result are fundamental for the convergence analysis of our algorithms.}
\begin{lemma}\label{Lemma:cont1}
Let the assumptions of Proposition~\ref{Prop:GF} be satisfied. Then, for every $t_0>0$, there exists $\delta>0$ such that $\lambda(t) \in D_{x(t_0)}$ for all $t\in (t_0-\delta,t_0)$.
\end{lemma}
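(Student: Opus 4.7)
The plan is a short direct continuity argument. I would first unpack what membership $\lambda(t)\in D_{x(t_0)}$ actually requires: the equality-multiplier block (the first $n_\text{h}$ entries) is unrestricted, the inequality-multiplier block must lie in $\mathbb{R}^{n_\text{g}}_{\geq 0}$, and in addition the component $\lambda(t)_{n_\text{h}+i}$ must vanish for every $i\not\in I_{x(t_0)}$. The sign constraints and the equality-block constraints are automatic from the fact that $\lambda(t)\in D_{x(t)}$ by definition of $\lambda(\cdot)$, so the only nontrivial thing to check is that $\lambda(t)_{n_\text{h}+i}=0$ for indices $i$ with $g_i(x(t_0))>0$, and for $t$ sufficiently close to $t_0$.

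I would then fix such an index $i$. Because $g_i$ is continuously differentiable (Section~\ref{Sec:Summary}) and $x$ is absolutely continuous, the composition $t\mapsto g_i(x(t))$ is continuous. Combined with $g_i(x(t_0))>0$, this yields a $\delta_i>0$ such that $g_i(x(t))>0$ for every $t\in(t_0-\delta_i,t_0+\delta_i)$. On this interval $i\not\in I_{x(t)}$, and therefore the very definition of $D_{x(t)}$ together with $\lambda(t)\in D_{x(t)}$ forces $\lambda(t)_{n_\text{h}+i}=0$. Finiteness of $n_\text{g}$ ensures that the collection of indices $\{\,i\,:\,i\not\in I_{x(t_0)}\,\}$ is finite, so $\delta:=\min_i\delta_i$ (taken over this finite collection, with $\delta$ arbitrary if the collection is empty) is strictly positive, and for every $t\in(t_0-\delta,t_0)$ we obtain $\lambda(t)\in D_{x(t_0)}$ as required.

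I do not foresee a genuine obstacle; the only point worth double-checking is well-definedness of the selection $\lambda(t)$. This is supplied by Lemma~\ref{Lemma:SD}: Assumption~\ref{Ass:MF} yields Slater's condition for $V_\alpha(x(t))$, hence strong duality and attainment of the maximum in \eqref{eq:dual2}, so at every $t$ the set of dual maximisers in $D_{x(t)}$ is nonempty and any choice $\lambda(t)$ taken from it is admissible for the argument above. Once this is in hand, the lemma reduces to the openness of the strict-positivity condition on each $g_i(x(\cdot))$, which is what I exploit.
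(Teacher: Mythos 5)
Your argument is correct and is essentially identical to the paper's proof: both rely on the continuity of $g_i\circ x$ to show that constraints inactive at $t_0$ remain inactive on an interval $(t_0-\delta,t_0)$, so that $I_{x(t)}\subset I_{x(t_0)}$ there and hence $\lambda(t)\in D_{x(t_0)}$. Your additional remarks on the well-definedness of the selection $\lambda(t)$ and on which membership conditions are automatic are fine but not needed beyond what the paper already establishes via Lemma~\ref{Lemma:SD}.
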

\begin{proof}
We fix $t_0>0$ and consider the set of inequality constraints that are inactive at $t_0$; that is, $g_i(x(t_0))>0$. Due to the continuity of $x$ and $g$ there exists an interval $ (t_0-\delta,t_0)$, where $\delta >0$ is small enough, such that $g_i(x(t))>0$ for all $t\in (t_0-\delta,t_0)$ and for all $i\not\in I_{x(t_0)}$. As a result, $I_{x(t)} \subset I_{x(t_0)}$ for all $t\in (t_0-\delta,t_0)$ and the result follows.
\end{proof}

\subsection{Convergence results}
\hchange{The following section provides the remaining statements of Proposition~\ref{Prop:GF}; i.e., showing the equivalence between \eqref{eq:velLeveltmp} and \eqref{eq:velLeveltmp2}, showing that the solutions of \eqref{eq:posLeveltmp}, \eqref{eq:velLeveltmp}, and \eqref{eq:velLeveltmp2} converge to stationary points of \eqref{eq:fundProb} and deriving convergence rates if $C$ is convex and $f$ is strongly convex.}

\begin{claim}\label{Claim:CTconv}
Let the assumptions of Proposition~\ref{Prop:GF} be satisfied. For any $x(0)\in \mathbb{R}^{n}$, \eqref{eq:velLeveltmp} and \eqref{eq:velLeveltmp2} are equivalent and lead to a unique trajectory $x(t)$, which is guaranteed to converge to the set of stationary points of \eqref{eq:fundProb} (for $\alpha>0$). Moreover, if the stationary points are isolated, the trajectory $x(t)$ converges to a single stationary point.
\end{claim}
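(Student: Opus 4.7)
The plan is to split the statement into four substeps: the equivalence of \eqref{eq:velLeveltmp} and \eqref{eq:velLeveltmp2}, local existence and uniqueness of the forward trajectory, convergence to the set of stationary points for $\alpha>0$, and convergence to a single point when stationary points are isolated.

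First I would handle the equivalence. For fixed $x$, the optimization in \eqref{eq:velLeveltmp2} is a strictly convex quadratic projection onto the non-empty polyhedron $V_\alpha(x)$ (non-emptiness is guaranteed by Assumption~\ref{Ass:MF}; the Slater-style construction used in the proof of Lemma~\ref{Lemma:SD} already produces a feasible velocity). Strict convexity yields a unique minimizer, and its first-order optimality condition reads $\nabla f(x)+v \in -\partial \psi_{V_\alpha(x)}(v)$. Setting $R := v+\nabla f(x)$ turns this into exactly \eqref{eq:velLeveltmp}, and conversely any pair $(v,R)$ solving \eqref{eq:velLeveltmp} solves the KKT system of \eqref{eq:velLeveltmp2}. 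Hence both formulations induce the same velocity field $v(x)$ defined in \eqref{eq:vdef}.

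Second, for existence and uniqueness of $x(t)$, I would piece together classical ODE arguments across active-set changes. Fix $t_0$: on the interval $(t_0-\delta,t_0)$ guaranteed by Lemma~\ref{Lemma:cont1}, the active set $I_{x(t)}$ is contained in $I_{x(t_0)}$, and by continuity of $g$ the active set is locally constant between such switching times. On each such interval $V_\alpha(x)$ is cut out by a fixed collection of smooth constraints, making the projection $v(x)=P_{V_\alpha(x)}(-\nabla f(x))$ locally Lipschitz in $x$ by standard sensitivity analysis of parametric quadratic programs under MFCQ. Picard–Lindelöf then yields a unique $C^1$ solution locally; I extend by concatenation across switching times, where the right derivative $\dot{x}(t)^+$ is well defined by \eqref{eq:velLeveltmp2} using $I_{x(t)}$. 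Using the coercivity of $f$ together with the bound $|v(x)|\leq |\nabla f(x)|+|\nabla\bar g(x)\lambda(x)|$ and \eqref{eq:evolConstr} (which keeps $\bar g(x(t))$ uniformly bounded), no finite-time blow-up occurs, giving a global absolutely continuous trajectory with piecewise continuous derivative.

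Third, for convergence to stationary points I would use a Lyapunov/LaSalle argument based on differentiating $f$ along $x(t)$. Using $v(x)=-\nabla f(x)+\nabla\bar g(x)\lambda(x)$ and the complementarity $\lambda(x)^{\TT}(\nabla\bar g(x)^{\TT} v(x)+\alpha\bar g(x))=0$ from \eqref{eq:statDual}, one obtains
\begin{equation*}
\tfrac{d}{dt} f(x(t)) = \nabla f(x)^{\TT} v(x) = -|v(x)|^2 -\alpha\,\lambda(x)^{\TT}\bar g(x).
\end{equation*}
By \eqref{eq:evolConstr}, the infeasibility $|\bar g(x(t))-\Pi_{\mathbb{R}^{n_\text{h}}\times\mathbb{R}_{\geq 0}^{n_\text{g}}}(\bar g(x(t)))|$ decays like $e^{-\alpha t}$, and $\lambda(x(t))$ remains bounded on bounded trajectories (MFCQ gives a uniform bound on dual multipliers, again via the argument in the proof of Lemma~\ref{Lemma:SD}). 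Hence the second term is integrable, and because $f$ is bounded below on trajectories (coercivity plus decaying infeasibility), we conclude $\int_0^\infty |v(x(t))|^2\,dt<\infty$. Combined with Lipschitz control of $\tfrac{d}{dt}|v(x)|^2$ on compact time windows, this forces $v(x(t))\to 0$. Every accumulation point $x^*$ of $x(t)$ then lies in $C$ (since $x(t)\to C$) and satisfies $v(x^*)=0$, which via strong duality (Lemma~\ref{Lemma:SD}) is exactly the KKT system of \eqref{eq:fundProb}; so $x^*$ is stationary. For the isolated case, I would finish with the standard argument: the trajectory is precompact, its $\omega$-limit set is connected, and a connected subset of an isolated set is a single point.

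The main obstacle is handling the discontinuity of the velocity field $v(x)$ across active-set transitions rigorously enough to get uniqueness of the forward trajectory. The key observation that rescues the argument is Lemma~\ref{Lemma:cont1}: from any $t_0$, the active set is constant on a one-sided interval, so no Zeno-like accumulation of switchings can happen in backward time along a given solution, and a symmetric forward argument combined with MFCQ-based Lipschitz sensitivity of $v$ on each fixed active-set region prevents ambiguity at switching instants. A secondary subtlety is ensuring that the Lagrange multipliers $\lambda(x(t))$ remain uniformly bounded across switches so that the complementarity identity above produces an integrable perturbation; this is where the uniform MFCQ hypothesis (Assumption~\ref{Ass:MF}) is essential.
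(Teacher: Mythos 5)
Your skeleton matches the paper's (equivalence via the stationarity condition of the strictly convex program, the energy identity $\frac{\diff}{\dt}f(x(t))^+=-|\dot{x}(t)^+|^2-\alpha\lambda(t)\T\bar{g}(x(t))$, exponential decay of infeasibility, accumulation points are stationary, connectedness for the isolated case), but there is a genuine gap at the decisive step. You deduce $v(x(t))\rightarrow 0$ from $\int_0^\infty |v(x(t))|^2\,\dt<\infty$ ``combined with Lipschitz control of $\tfrac{d}{dt}|v(x)|^2$ on compact time windows.'' That control does not exist: $v(\cdot)$ is discontinuous in $x$ (the active set $I_x$ changes the definition of $V_\alpha(x)$ abruptly, as the one-dimensional example in Section~\ref{Sec:Example} makes explicit), so $t\mapsto |v(x(t))|^2$ has jumps whenever constraints switch on or off, and an $L^2$ function with uncontrolled jumps need not converge to zero. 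The paper's proof is built precisely around this obstacle: it works with the \emph{dual} value $\xi_\text{d}(t)=-\tfrac12|\dot{x}(t)^+|^2-\alpha\lambda(t)\T\bar{g}(x(t))$, uses Lemma~\ref{Lemma:cont1} to show that $\lambda(t)$ is dual-feasible at the slightly later time $t_0$, hence $\xi_\text{d}$ can only jump \emph{upward} at discontinuities and decreases at a uniformly bounded rate in between, and then invokes a bespoke one-sided variant of Barbalat's lemma (Lemma~\ref{Lemma:cont2}, Appendix~\ref{App:Barbalat}). Without this sign information on the jumps your argument fails. A related, smaller gap: once you have $v(x(t_j))\rightarrow 0$ and $x(t_j)\rightarrow\bar{x}$, you cannot conclude $v(\bar{x})=0$ by continuity, since $v$ is not continuous; the paper closes this via upper semicontinuity of the dual function $d$.

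Your existence-and-uniqueness construction is also not sound as stated. Lemma~\ref{Lemma:cont1} gives only the one-sided backward containment $I_{x(t)}\subset I_{x(t_0)}$; it does not make the active set locally constant, and it does not exclude an accumulation of switching times, so the Picard--Lindel\"of-plus-concatenation argument is not justified (and $v$ is not locally Lipschitz across active-set boundaries, which is where the interesting dynamics happen). The paper deliberately avoids this: Proposition~\ref{Prop:GF} asserts uniqueness only ``if it exists,'' derives uniqueness of the trajectory from pointwise uniqueness of $\dot{x}(t)^+$, and defers existence to the cited literature (Filippov, Aubin--Cellina, Brogliato). Your connectedness argument for the isolated case is fine and is in fact a slightly cleaner route than the paper's sampled-sequence argument, but it does not rescue the two issues above.
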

\begin{proof}
The equivalence between \eqref{eq:velLeveltmp} and \eqref{eq:velLeveltmp2} follows from the fact that \eqref{eq:velLeveltmp} corresponds to the stationarity condition of \eqref{eq:velLeveltmp2}, which, by strong convexity and non-emptiness of $V_\alpha(x(t))$, uniquely defines $\dot{x}(t)^+$ for each $t\in (0,\infty)$. This implies that $x(t)$ is unique.

We argue next that $x(t)\rightarrow C$ for $t\rightarrow \infty$, and that, as a result, $x(t)$ and $\lambda(t)$ are bounded. According to \eqref{eq:evolConstr}, the constraint violations at time $t$ can be bounded by 
$g_i(x(t))\geq g_i(x(0)) e^{-\alpha t}$ for all $i\in I_{x(0)}$ and $|h(x(t))| \leq |h(x(0))| e^{-\alpha t}$. We therefore conclude that $x(t) \rightarrow C$ for $t\rightarrow \infty$. The fact that $C$ is bounded and $x$ is continuous implies that $x(t)$ is bounded for all $t\geq 0$. As a result, there exist bounded dual variables $\lambda(t)$ satisfying \eqref{eq:dual}.

The stationarity condition \eqref{eq:statDual} implies that
\begin{equation*}
\lambda(t)\T \nabla \bar{g}(x(t))\T \left[ \nabla \bar{g}(x(t)) \lambda(t) - \nabla f(x(t))\right] + \alpha \lambda(t)\T \bar{g}(x(t)) = 0,
\end{equation*}
due to complementary slackness. This can be restated as $-R(t)\T\dot{x}(t)^+=\alpha \lambda(t)\T \bar{g}(x(t))$, which, in view of \eqref{eq:velLeveltmp}, yields
\begin{equation}
\frac{\diff}{\dt} f(x(t))^+ = -|\dot{x}(t)^+|^2 - \alpha \lambda(t)\T \bar{g}(x(t)). \label{eq:dec2}
\end{equation}
We further note that $f(x(t))$ is bounded below, which, by taking the integral of the right-hand side of \eqref{eq:dec2}, implies
\begin{equation}
\int_{0}^{\infty} - |\dot{x}(t)^+|^2 - \alpha \lambda(t)\T \bar{g}(x(t)) \diff t > - \infty. \label{eq:cond3}
\end{equation}
We note that the integrand is closely related to the objective function in \eqref{eq:dual}, which we denote as $\xi_\text{d}(t)$:
\begin{equation*}
\xi_\text{d}(t):=-\frac{1}{2}|\dot{x}(t)^+|^2 - \alpha \lambda(t)\T \bar{g}(x(t)).
\end{equation*}
From the fact that $\lambda(t)$ is bounded and that $-\lambda(t)\T \bar{g}(x(t))$ decays exponentially, we conclude that $\limsup_{t \rightarrow \infty} \xi_\text{d}(t) \leq 0$. From \eqref{eq:cond3} it also follows that the integral of $\xi_\text{d}$ over $\mathbb{R}_{\geq 0}$ is bounded below.

We will now establish that $\lim_{t\rightarrow \infty} \xi_\text{d}(t) =0$ by applying a variant of Barbalat's lemma; see Lemma~\ref{Lemma:cont2} in Appendix~\ref{App:Barbalat}. We start by observing that $\lambda$ inherits the continuity properties of $\dot{x}^+$, due to the fact that $\nabla \bar{g}(x(t))\lambda(t)=\nabla f(x(t))+\dot{x}(t)^+$. This means that $\lambda$ is piecewise continuous, and for each time $t_0>0$, $\lambda(t_0)=\lim_{t\downarrow t_0} \lambda(t)$. The same applies for $\xi_\text{d}$. We now characterize the discontinuities of $\xi_\text{d}$ and provide a lower bound on its derivative, whenever it exists. We fix $t_0>0$. By virtue of Lemma~\ref{Lemma:cont1}, we conclude that $\lambda(t)$ is a feasible candidate for \eqref{eq:dual} at time $t_0$ as long as $t\in (t_0-\delta,t_0)$ for sufficiently small $\delta>0$. This means
\begin{align}
\xi_\text{d}(t_0)&\geq -\frac{1}{2} |\nabla \bar{g} (x(t_0)) \lambda(t) - \nabla f(x(t_0))|^2 - \alpha \bar{g}(x(t_0))\T \lambda(t), \nonumber\\
&\geq \xi_\text{d}(t) - r_1(t_0) |x(t_0)-x(t)| - r_2(t_0) |x(t_0)-x(t)|^2, \label{eq:proof1tmp}
\end{align}
for all $t\in (t_0-\delta,t_0)$, where $r_1(t_0)\geq 0$ and $r_2(t_0)\geq 0$ are related to the remainder terms of a first-order Taylor expansion of $\nabla_x l(x,\lambda(t))$ and $\lambda(t)\T \bar{g}(x)$ with respect to $x$ at $(x(t),\lambda(t))$. The fact that $x(t)$ and $\lambda(t)$ are bounded implies that $r_1(t_0)$ and $r_2(t_0)$ are likewise bounded (uniformly) for all $t_0>0$. Furthermore, $\dot{x}(t)^+$ is bounded, which implies the existence of a constant $\bar{r}_1>0$ (independent of $t_0$) such that
\begin{equation*}
\xi_\text{d}(t_0) \geq \xi_\text{d}(t) - \bar{r}_1 \delta,
\end{equation*}
for sufficiently small $\delta$ and all $t\in (t_0-\delta,t_0)$.
We can now distinguish two cases, depending on whether $\xi_\text{d}$ is continuous at $t_0$ or not. If $\xi_\text{d}$ is discontinuous at $t_0$, we obtain $\xi_\text{d}(t_0) \geq \xi_\text{d}(t_0)^-$. The other case yields $\xi_\text{d}(t_2)\geq \xi_\text{d}(t_1) - \bar{r}_1 (t_2-t_1)$, for all $t_2 \geq t_1$, as long as $\xi_\text{d}$ is continuous on $(t_1,t_2)$. 

We are now ready to apply Lemma~\ref{Lemma:cont2} (see Appendix~\ref{App:Barbalat}), which implies that $\lim_{t\rightarrow \infty} \xi_\text{d}(t)=0$. As a result of the exponential convergence of $\lambda(t)\T \bar{g}(x(t))$, we obtain $\lim_{t\rightarrow \infty} |\dot{x}(t)^+|=0$. \hchange{Let $\bar{x}\in C$ be an accumulation point of $x(t)$, which means that there exists a sequence $x(t_j)$, $j>0$ with $x(t_j) \rightarrow \bar{x}$. From the analysis of $\xi_\text{d}(t)$ we infer that $d(x(t_j))\rightarrow f(\bar{x})$, and from the fact that the function $d$ is upper semicontinuous \citep[see][Thm.~1.17, p.~16]{RockafellarWets} we conclude $f(\bar{x})=\lim_{j\rightarrow \infty} d(x(t_j))\leq d(\bar{x})=f(\bar{x})-|v(\bar{x})|^2/(2\alpha)$. This implies $v(\bar{x})=0$ and shows that $\bar{x}$ is a stationary point of \eqref{eq:fundProb}.}

It remains to show that $x(t)$ converges to a single stationary point in case that the stationary points are isolated. To that end, we consider the sequence $x(k)$, $k>0$. Due to the fact that $\dot{x}(t)^+$ converges, we can find, for every $\epsilon>0$, an integer $N>0$ such that $|x(k+1)-x(k)|<\epsilon$ for all $k>N$. Choosing $\epsilon$ small enough implies that $x(k)$ necessarily converges to a single stationary point, which we denote by $x_\text{s}$ (this would otherwise contradict the fact that the stationary points are isolated). Moreover, $|x(t)-x_\text{s}|\leq |x(t)-x(k_t)| + |x(k_t)-x_\text{s}|$, where $k_t$ is the largest integer such that $k_t<t$. We conclude $\lim_{t\rightarrow \infty} x(t)=x_\text{s}$ by observing that $|x(t)-x(k_t)|$ is bounded by the supremum of $|\dot{x}(\tau)^+|$ over $\tau \in (k_t,t)$, which becomes arbitrarily small for large $t$.
\end{proof}

\begin{claim}
Let the assumptions of Proposition 2 be satisfied including Assumption~\ref{Ass:Conv} (convexity) and let $\alpha\leq 2\mu$. Then the following holds:
\begin{equation*}
(h(x(0)),\min\{0,g(x(0))\})\T \lambda^* e^{-\alpha t} \leq f(x(t))-f^*\leq (f(x(0))-f^*) e^{-2 \mu t}, 
\end{equation*}
for all $x(0)\in \mathbb{R}^n$, where $x(t)$ satisfies \eqref{eq:velLeveltmp} and \eqref{eq:velLeveltmp2}, $f^*$ is the optimal cost in \eqref{eq:fundProb} and $\lambda^*$ is a corresponding multiplier that satisfies the Karush-Kuhn-Tucker conditions \hchange{of \eqref{eq:fundProb}}.
\end{claim}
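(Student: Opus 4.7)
The plan is to handle the two inequalities separately and to use quite different machinery for each.

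For the lower bound, I would start by combining strong convexity of $f$ at the optimizer $x^*$ with the KKT stationarity $\nabla f(x^*)=\nabla\bar g(x^*)\lambda^*$. Under Assumption~\ref{Ass:Conv} the set $C$ is convex, so (with the standard convention) each $g_i$ is concave and $h$ is affine. With $\lambda_g^*\geq 0$ and complementary slackness this gives the componentwise estimate $\nabla f(x^*)^\top(x-x^*)=\lambda^{*\top}\nabla\bar g(x^*)^\top(x-x^*)\geq \lambda^{*\top}\bar g(x)$ for every $x\in\mathbb R^n$. The $\mu$-strong convexity of $f$ then yields $f(x)-f^*\geq \lambda^{*\top}\bar g(x)+(\mu/2)|x-x^*|^2\geq \lambda^{*\top}\bar g(x)$. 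Applying this to $x=x(t)$ I would then invoke the constraint-decay estimate \eqref{eq:evolConstr}: $h(x(t))=h(x(0))e^{-\alpha t}$ and, for $i\in I_{x(0)}$, $g_i(x(t))\geq g_i(x(0))e^{-\alpha t}=\min\{0,g_i(x(0))\}e^{-\alpha t}$. For $i\notin I_{x(0)}$, I would argue by first-exit-time that $g_i(x(t))$ cannot go negative (applying \eqref{eq:evolConstr} starting from the first time $g_i(x(\cdot))$ hits zero), hence $\lambda_i^* g_i(x(t))\geq 0=\lambda_i^*\min\{0,g_i(x(0))\}e^{-\alpha t}$. Summing these component-wise estimates, multiplied by the nonnegative multipliers $\lambda_i^*$, gives $\lambda^{*\top}\bar g(x(t))\geq (h(x(0)),\min\{0,g(x(0))\})^\top\lambda^* e^{-\alpha t}$, producing the lower bound.

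For the upper bound, I would set up a Lyapunov argument using $V(t)=f(x(t))-f^*$. The central observation is that, under Assumption~\ref{Ass:Conv}, the vector $\alpha(x^*-x(t))$ lies in $V_\alpha(x(t))$: the affine $h$ gives exact cancellation $\nabla h(x(t))^\top \alpha(x^*-x(t))+\alpha h(x(t))=-\alpha h(x(t))+\alpha h(x(t))=0$, while concavity of each $g_i$ together with $g_i(x^*)\geq 0$ gives $\nabla g_i(x(t))^\top\alpha(x^*-x(t))+\alpha g_i(x(t))\geq \alpha g_i(x^*)\geq 0$. Using this competitor in the variational characterization \eqref{eq:velLeveltmp2} yields $|\dot x(t)^++\nabla f(x(t))|^2\leq |\alpha(x^*-x(t))+\nabla f(x(t))|^2$. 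Expanding, substituting $\dot V(t)^+=\nabla f(x(t))^\top\dot x(t)^+$ from the chain rule, and applying $\mu$-strong convexity of $f$ to bound $\nabla f(x(t))^\top(x^*-x(t))\leq -V(t)-(\mu/2)|x^*-x(t)|^2$ produces a differential inequality of the form $\dot V(t)^++2\mu V(t)\leq (2\mu-\alpha)\,\lambda(t)^\top\bar g(x(t))$, after also using \eqref{eq:dec2} and the strong-convexity-driven lower bound $|\dot x(t)^+|^2\geq 2\mu(V(t)-\lambda(t)^\top\bar g(x(t)))$ obtained via Young's inequality. Combining with the hypothesis $\alpha\leq 2\mu$ and controlling the residual $(2\mu-\alpha)\,\lambda(t)^\top\bar g(x(t))$ (nonpositive on the inequality part since $\lambda_{g,i}(t)\geq 0$ and $g_i(x(t))\leq 0$ for $i\in I_{x(t)}$, and controllable on the equality part through the exponential estimate $|h(x(t))|=|h(x(0))|e^{-\alpha t}$ together with boundedness of $\lambda(t)$) yields $\dot V(t)^+\leq -2\mu V(t)$, and Grönwall's inequality then gives the stated exponential bound.

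The main obstacle is the upper bound. The term $\lambda(t)^\top\bar g(x(t))$ is only partially sign-definite: its active-inequality part is automatically $\leq 0$ by complementary slackness at the subproblem level, but the equality part $\lambda_h(t)^\top h(x(t))$ has no definite sign when $x(t)\notin C$. The delicate step will be to show that this equality contribution cannot obstruct the $e^{-2\mu t}$ bound; I expect this to hinge on combining the explicit $e^{-\alpha t}$ decay of $h(x(t))$ from \eqref{eq:evolConstr} with the boundedness of the dual trajectory (guaranteed by the Mangasarian--Fromovitz qualification and the arguments already used in the proof of Claim~\ref{Claim:CTconv}).
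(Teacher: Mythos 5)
Your lower bound is correct but takes a genuinely different route from the paper. The paper treats the trajectory as a feasible point of a perturbed program whose right-hand sides are $(h(x(0)),\min\{0,g(x(0))\})e^{-\alpha t}$ (feasibility being exactly \eqref{eq:evolConstr}) and then invokes the standard sensitivity inequality for the perturbation function, citing Rockafellar, to get $f^*(t)-f^*\geq (h(x(0)),\min\{0,g(x(0))\})\T \lambda^* e^{-\alpha t}$. You instead prove the pointwise inequality $f(x)-f^*\geq {\lambda^*}\T\bar g(x)$ directly from convexity, KKT stationarity and complementary slackness at $x^*$, and then feed in \eqref{eq:evolConstr} componentwise; your first-exit-time argument for the initially inactive constraints is the same device used in the proof of Proposition~\ref{Prop:velLevel}. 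Both arguments encode the same idea---the multiplier measures sensitivity to constraint perturbations---but yours is self-contained where the paper's is a citation; either is acceptable.

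The upper bound is where your proposal has a genuine gap. After some redundancy (the competitor $\alpha(x^*-x(t))\in V_\alpha(x(t))$ is not needed once you combine \eqref{eq:dec2} with $|\dot x(t)^+|^2\geq 2\mu\,(f(x(t))-f^*-\lambda(t)\T\bar g(x(t)))$, which is \eqref{eq:importantUBd2} and comes from strong convexity of $l(\cdot,\lambda)$ rather than Young's inequality), you arrive at exactly the paper's differential inequality $\frac{\diff}{\dt} f(x(t))^+\leq -2\mu(f(x(t))-f^*)+(2\mu-\alpha)\,\lambda(t)\T\bar g(x(t))$. The paper finishes by asserting the residual term is nonpositive; you correctly observe that only its active-inequality part is sign-definite and that the equality part $\lambda_h(t)\T h(x(t))$ is not when $x(t)\notin C$, but you then defer the resolution to ``combining the $e^{-\alpha t}$ decay of $h$ with boundedness of $\lambda(t)$.'' That step cannot be completed in the form you propose: Gr\"onwall applied to $\dot V\leq -2\mu V + Ce^{-\alpha t}$ with $C>0$ gives $V(t)\leq V(0)e^{-2\mu t}+\tfrac{C}{2\mu-\alpha}\left(e^{-\alpha t}-e^{-2\mu t}\right)$, an additive correction that does not vanish and in fact dominates $e^{-2\mu t}$ whenever $\alpha<2\mu$, so the clean multiplicative bound $(f(x(0))-f^*)e^{-2\mu t}$ does not follow this way. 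To close the argument you must actually establish $\lambda(t)\T\bar g(x(t))\leq 0$ along the trajectory---immediate when $x(0)\in C$ (then $h\equiv 0$ and the active inequality terms are nonpositive), or when there are no equality constraints---which is the sign assertion the paper relies on. As written, your upper-bound argument does not close.
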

\begin{proof} 
We will use \eqref{eq:dec2} as a starting point for deriving the upper bound. From \eqref{eq:importantUBd2} (see Lemma~\ref{Lem:discrete2}, Appendix~\ref{App:d}) we conclude that
\begin{equation*}
-|\dot{x}(t)^+|^2 \leq - 2\mu (f(x(t))-f^*) + 2 \mu \lambda(t)\T \bar{g}(t).
\end{equation*}
Thus, inserting the upper bound on $-|\dot{x}(t)^+|^2$ in \eqref{eq:dec2}, we obtain
\begin{equation*}
\frac{\diff}{\dt} f(x(t))^+ \leq - 2\mu (f(x(t))-f(x^*)) + (2\mu - \alpha) \lambda(t)\T \bar{g}(x(t)).
\end{equation*}
For $\alpha \leq 2\mu$, the term $(2\mu-\alpha) \lambda(t)\T \bar{g}(x(t))$ is certainly negative (or vanishes completely if $x(0)\in C$), which readily proves the upper bound.

The lower bound follows from a perturbation analysis. For a given $x(0)\in C$, we define 
\begin{equation*}
f^*(t):=\min_{z\in \mathbb{R}^n} f(z), \quad \text{s.t.} \quad  h(z)=h(x(0))e^{-\alpha t}, \quad g(z)\geq \min\{0, g(x(0))\} e^{-\alpha t},
\end{equation*} 
which is of the form \eqref{eq:fundProb}, with the sole difference that the right-hand side of the constraints has been replaced with the vector $(h(x(0)),\min\{0,g(x(0))\}) \exp(-\alpha t)$. The trajectory $x(t)$ is guaranteed to be feasible with respect to these modified constraints, which implies that $f^*(t)\leq f(x(t))$. The minimum is attained for all $t\in [0,\infty)$, due to the fact that $f$ is bounded below and the modified set of feasible points is closed. A multiplier $\lambda^*$ satisfying the Karush-Kuhn-Tucker conditions of \eqref{eq:fundProb} captures the sensitivity of the cost function with respect to perturbations of the right-hand side of the constraints. More precisely, $-\lambda^*$ is guaranteed to satisfy the following inequality~\citep[see, e.g.,][p.~277]{RockafellarConvex}:
\begin{equation*}
f^*(t)-f^*\geq (h(x(0)),\min\{0,g(x(0))\})\T \lambda^*  \exp(-\alpha t).
\end{equation*}
The lower bound of \eqref{eq:boundinfeas} in Proposition~\ref{Prop:GF} then follows from the fact that $f(x(t))\geq f^*(t)$ for all $t\in [0,\infty)$.
\end{proof}
\section{A First Example}\label{Sec:Example}
In this section we present an example that illustrates the behavior of \eqref{eq:velLeveltmp} and \eqref{eq:disAlgorithm}. We consider the following problem:
\begin{equation}
    \min_{x\in \mathbb{R}} \frac{1}{10} (x+1)^2, \quad \text{s.t.} \quad x\in [0,2], \label{eq:simpEx}
\end{equation}
which has the unique minimum $x^*=0$. The function $f$ is therefore given by $(x+1)^2/10$, whereas $g_1(x)=x$ and $g_2(x)=2-x$. It will be instructive to plot the function $\nabla_x l(x,\lambda(x))=\nabla f(x) - R(x)$, where the multiplier $\lambda(x)$ is obtained from \eqref{eq:dual}. This yields a continuous-time gradient flow that is given by $\dot{x}(t)^+=-\nabla_x l(x(t),\lambda(t))$, whereas the discrete-time version is given by $x_{k+1}-x_{k}=-T \nabla_x l(x_k,\lambda_k)$, where $\lambda(t)$ and $\lambda_k$ are implicitly dependent on $x(t)$ and $x_k$, respectively. Furthermore, we can interpret $\nabla_x l(x,\lambda(x))$ as the gradient of a continuous function $F_\alpha: \mathbb{R} \rightarrow \mathbb{R}$, with $F_\alpha(0)=f^*$. We also plot the function $d(x)$ as defined in \eqref{eq:dual2}.

\newlength{\figurewidth}
\newlength{\figureheight}

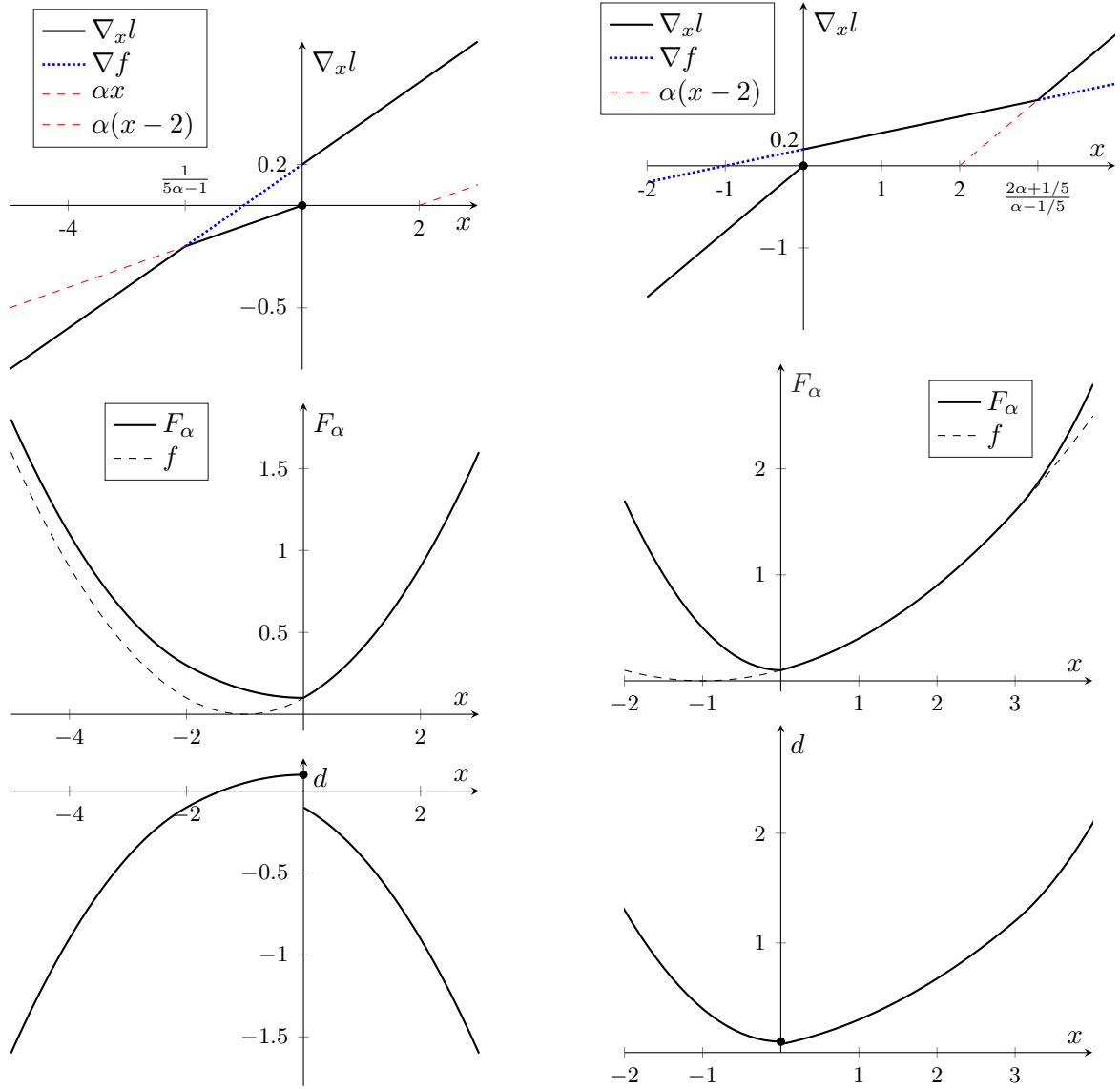
\begin{figure}
    
    \setlength{\figurewidth}{.45\columnwidth}
    \setlength{\figureheight}{.3\columnwidth}
    
    \begin{minipage}[l]{.45\columnwidth}
    \centering
%
%
\begin{tikzpicture}

\begin{axis}[%
width=0.951\figurewidth,
height=\figureheight,
at={(0\figurewidth,0\figureheight)},
scale only axis,
xmin=-5,
xmax=3,
xlabel style={font=\color{white!15!black}},
xtick={-4,2},
xticklabels={-4,2},
extra x ticks={-2},
extra x tick style={
    xticklabel style={yshift=0.5ex, anchor=south}
},
extra x tick labels={$\frac{1}{5\alpha -1}$},
ytick={-0.5,0.2},
xlabel={$x$},
xlabel near ticks,
ymin=-0.8,
ymax=0.8,
ylabel style={font=\color{white!15!black}},
ylabel={$\nabla_x l$},
ylabel near ticks,
axis background/.style={fill=white},
axis x line=middle,
axis y line=middle,
xlabel style={at={(axis description cs:0.97,0.49)},anchor=north},
legend style={legend cell align=left, align=left, draw=white!15!black},
legend style={at={(0.05,1.1)},anchor=north west}
]
\addplot [color=black,thick]
  table[row sep=crcr]{%
-5	-0.8\\
-4.9	-0.78\\
-4.8	-0.76\\
-4.7	-0.74\\
-4.6	-0.72\\
-4.5	-0.7\\
-4.4	-0.68\\
-4.3	-0.66\\
-4.2	-0.64\\
-4.1	-0.62\\
-4	-0.6\\
-3.9	-0.58\\
-3.8	-0.56\\
-3.7	-0.54\\
-3.6	-0.52\\
-3.5	-0.5\\
-3.4	-0.48\\
-3.3	-0.46\\
-3.2	-0.44\\
-3.1	-0.42\\
-3	-0.4\\
-2.9	-0.38\\
-2.8	-0.36\\
-2.7	-0.34\\
-2.6	-0.32\\
-2.5	-0.3\\
-2.4	-0.28\\
-2.3	-0.26\\
-2.2	-0.24\\
-2.1	-0.22\\
-2	-0.2\\
-1.9	-0.19\\
-1.8	-0.18\\
-1.7	-0.17\\
-1.6	-0.16\\
-1.5	-0.15\\
-1.4	-0.14\\
-1.3	-0.13\\
-1.2	-0.12\\
-1.1	-0.11\\
-1	-0.1\\
-0.9	-0.09\\
-0.8	-0.08\\
-0.7	-0.07\\
-0.6	-0.06\\
-0.5	-0.05\\
-0.4	-0.04\\
-0.3	-0.03\\
-0.2	-0.02\\
-0.1	-0.01\\
0	0\\
};
\addlegendentry{$\nabla_x l$}

\addplot [color=black, thick, forget plot]
  table[row sep=crcr]{%
0.001	0.2002\\
0.101	0.2202\\
0.201	0.2402\\
0.301	0.2602\\
0.401	0.2802\\
0.501	0.3002\\
0.601	0.3202\\
0.701	0.3402\\
0.801	0.3602\\
0.901	0.3802\\
1.001	0.4002\\
1.101	0.4202\\
1.201	0.4402\\
1.301	0.4602\\
1.401	0.4802\\
1.501	0.5002\\
1.601	0.5202\\
1.701	0.5402\\
1.801	0.5602\\
1.901	0.5802\\
2.001	0.6002\\
2.101	0.6202\\
2.201	0.6402\\
2.301	0.6602\\
2.401	0.6802\\
2.501	0.7002\\
2.601	0.7202\\
2.701	0.7402\\
2.801	0.7602\\
2.901	0.7802\\
3.001	0.8002\\
};

\addplot [color=black,only marks, mark size=1.5pt,forget plot]
 table[row sep=crcr]{%
 0 0\\
 };

\addplot [color=blue, line width=1pt, densely dotted]
  table[row sep=crcr]{%
-2	-0.2\\
-1.9	-0.18\\
-1.8	-0.16\\
-1.7	-0.14\\
-1.6	-0.12\\
-1.5	-0.1\\
-1.4	-0.08\\
-1.3	-0.06\\
-1.2	-0.04\\
-1.1	-0.02\\
-1	0\\
-0.9	0.02\\
-0.8	0.04\\
-0.7	0.06\\
-0.6	0.08\\
-0.5	0.1\\
-0.4	0.12\\
-0.3	0.14\\
-0.2	0.16\\
-0.1	0.18\\
0	0.2\\
};
\addlegendentry{$\nabla f$}

\addplot [color=red, dashed]
  table[row sep=crcr]{%
-5	-0.5\\
-4.9	-0.49\\
-4.8	-0.48\\
-4.7	-0.47\\
-4.6	-0.46\\
-4.5	-0.45\\
-4.4	-0.44\\
-4.3	-0.43\\
-4.2	-0.42\\
-4.1	-0.41\\
-4	-0.4\\
-3.9	-0.39\\
-3.8	-0.38\\
-3.7	-0.37\\
-3.6	-0.36\\
-3.5	-0.35\\
-3.4	-0.34\\
-3.3	-0.33\\
-3.2	-0.32\\
-3.1	-0.31\\
-3	-0.3\\
-2.9	-0.29\\
-2.8	-0.28\\
-2.7	-0.27\\
-2.6	-0.26\\
-2.5	-0.25\\
-2.4	-0.24\\
-2.3	-0.23\\
-2.2	-0.22\\
-2.1	-0.21\\
-2	-0.2\\
};
\addlegendentry{$\alpha x$}

\addplot [color=red, dashed]
  table[row sep=crcr]{%
2	0\\
2.1	0.01\\
2.2	0.02\\
2.3	0.03\\
2.4	0.04\\
2.5	0.05\\
2.6	0.06\\
2.7	0.07\\
2.8	0.08\\
2.9	0.09\\
3	0.1\\
3.1	0.11\\
};
\addlegendentry{$\alpha (x-2)$}

\end{axis}
\end{tikzpicture}
%
%
\begin{tikzpicture}

\begin{axis}[%
width=0.951\figurewidth,
height=\figureheight,
at={(0\figurewidth,0\figureheight)},
scale only axis,
xmin=-5,
xmax=3,
xlabel style={font=\color{white!15!black}},
xlabel={$x$},
ymin=-.1,
ymax=1.9,
ylabel style={font=\color{white!15!black}},
ylabel={$F_\alpha$},
axis background/.style={fill=white},
axis x line=middle,
axis y line=middle,
legend style={legend cell align=left, align=left, draw=white!15!black},
legend style={at={(0.2,1.0)},anchor=north west},
]
\addplot [color=black,thick]
  table[row sep=crcr]{%
-5	1.8\\
-4.9	1.721\\
-4.8	1.644\\
-4.7	1.569\\
-4.6	1.496\\
-4.5	1.425\\
-4.4	1.356\\
-4.3	1.289\\
-4.2	1.224\\
-4.1	1.161\\
-4	1.1\\
-3.9	1.041\\
-3.8	0.984\\
-3.7	0.929\\
-3.6	0.876\\
-3.5	0.825\\
-3.4	0.776\\
-3.3	0.729\\
-3.2	0.684\\
-3.1	0.641\\
-3	0.6\\
-2.9	0.561\\
-2.8	0.524\\
-2.7	0.489\\
-2.6	0.456\\
-2.5	0.425\\
-2.4	0.396\\
-2.3	0.369\\
-2.2	0.344\\
-2.1	0.321\\
-2	0.3\\
-1.9	0.2805\\
-1.8	0.262\\
-1.7	0.2445\\
-1.6	0.228\\
-1.5	0.2125\\
-1.4	0.198\\
-1.3	0.1845\\
-1.2	0.172\\
-1.1	0.1605\\
-1	0.15\\
-0.9	0.1405\\
-0.8	0.132\\
-0.7	0.1245\\
-0.6	0.118\\
-0.5	0.1125\\
-0.4	0.108\\
-0.3	0.1045\\
-0.2	0.102\\
-0.1	0.1005\\
0	0.1\\
0.0999999999999996	0.121\\
0.2	0.144\\
0.3	0.169\\
0.4	0.196\\
0.5	0.225\\
0.6	0.256\\
0.7	0.289\\
0.8	0.324\\
0.9	0.361\\
1	0.4\\
1.1	0.441\\
1.2	0.484\\
1.3	0.529\\
1.4	0.576\\
1.5	0.625\\
1.6	0.676\\
1.7	0.729\\
1.8	0.784\\
1.9	0.841\\
2	0.9\\
2.1	0.961\\
2.2	1.024\\
2.3	1.089\\
2.4	1.156\\
2.5	1.225\\
2.6	1.296\\
2.7	1.369\\
2.8	1.444\\
2.9	1.521\\
3	1.6\\
};
\addlegendentry{$F_\alpha$}

\addplot [color=black, dashed]
  table[row sep=crcr]{%
-5	1.6\\
-4.9	1.521\\
-4.8	1.444\\
-4.7	1.369\\
-4.6	1.296\\
-4.5	1.225\\
-4.4	1.156\\
-4.3	1.089\\
-4.2	1.024\\
-4.1	0.961\\
-4	0.9\\
-3.9	0.841\\
-3.8	0.784\\
-3.7	0.729\\
-3.6	0.676\\
-3.5	0.625\\
-3.4	0.576\\
-3.3	0.529\\
-3.2	0.484\\
-3.1	0.441\\
-3	0.4\\
-2.9	0.361\\
-2.8	0.324\\
-2.7	0.289\\
-2.6	0.256\\
-2.5	0.225\\
-2.4	0.196\\
-2.3	0.169\\
-2.2	0.144\\
-2.1	0.121\\
-2	0.1\\
-1.9	0.081\\
-1.8	0.064\\
-1.7	0.049\\
-1.6	0.036\\
-1.5	0.025\\
-1.4	0.016\\
-1.3	0.009\\
-1.2	0.00400000000000001\\
-1.1	0.001\\
-1	0\\
-0.9	0.001\\
-0.8	0.004\\
-0.7	0.009\\
-0.6	0.016\\
-0.5	0.025\\
-0.4	0.036\\
-0.3	0.049\\
-0.2	0.064\\
-0.1	0.081\\
0	0.1\\
};
\addlegendentry{$f$}

\end{axis}
\end{tikzpicture}
%
%
\begin{tikzpicture}

\begin{axis}[%
width=0.951\figurewidth,
height=\figureheight,
at={(0\figurewidth,0\figureheight)},
scale only axis,
xmin=-5,
xmax=3,
xlabel style={font=\color{white!15!black}},
xlabel={$x$},
ymin=-1.8,
ymax=0.2,
ylabel style={font=\color{white!15!black}},
ylabel={$d$},
axis background/.style={fill=white},
axis x line=middle,
axis y line=middle,
legend style={legend cell align=left, align=left, draw=white!15!black}
]
\addplot [color=black,thick,forget plot]
  table[row sep=crcr]{%
-5	-1.6\\
-4.9	-1.521\\
-4.8	-1.444\\
-4.7	-1.369\\
-4.6	-1.296\\
-4.5	-1.225\\
-4.4	-1.156\\
-4.3	-1.089\\
-4.2	-1.024\\
-4.1	-0.961\\
-4	-0.9\\
-3.9	-0.841\\
-3.8	-0.784\\
-3.7	-0.729\\
-3.6	-0.676\\
-3.5	-0.625\\
-3.4	-0.576\\
-3.3	-0.529\\
-3.2	-0.484\\
-3.1	-0.441\\
-3	-0.4\\
-2.9	-0.361\\
-2.8	-0.324\\
-2.7	-0.289\\
-2.6	-0.256\\
-2.5	-0.225\\
-2.4	-0.196\\
-2.3	-0.169\\
-2.2	-0.144\\
-2.1	-0.121\\
-2	-0.1\\
-1.9	-0.0805000000000001\\
-1.8	-0.062\\
-1.7	-0.0445\\
-1.6	-0.028\\
-1.5	-0.0125\\
-1.4	0.00199999999999999\\
-1.3	0.0155\\
-1.2	0.028\\
-1.1	0.0395\\
-1	0.05\\
-0.9	0.0595\\
-0.8	0.068\\
-0.7	0.0755\\
-0.6	0.082\\
-0.5	0.0875\\
-0.4	0.092\\
-0.3	0.0955\\
-0.2	0.098\\
-0.1	0.0995\\
0	0.1\\
};

\addplot [color=black,only marks, mark size=1.5pt,forget plot]
 table[row sep=crcr]{%
 0 0.1\\
 };

\addplot [color=black,thick,forget plot]
  table[row sep=crcr]{%
0.001	-0.1002001\\
0.101	-0.1212201\\
0.201	-0.1442401\\
0.301	-0.1692601\\
0.401	-0.1962801\\
0.501	-0.2253001\\
0.601	-0.2563201\\
0.701	-0.2893401\\
0.801	-0.3243601\\
0.901	-0.3613801\\
1.001	-0.4004001\\
1.101	-0.4414201\\
1.201	-0.4844401\\
1.301	-0.5294601\\
1.401	-0.5764801\\
1.501	-0.6255001\\
1.601	-0.6765201\\
1.701	-0.729540099999999\\
1.801	-0.7845601\\
1.901	-0.8415801\\
2.001	-0.9006001\\
2.101	-0.9616201\\
2.201	-1.0246401\\
2.301	-1.0896601\\
2.401	-1.1566801\\
2.501	-1.2257001\\
2.601	-1.2967201\\
2.701	-1.3697401\\
2.801	-1.4447601\\
2.901	-1.5217801\\
3.001	-1.6008001\\
};

\end{axis}
\end{tikzpicture}%
    \end{minipage}\hfill
    \begin{minipage}[r]{.45\columnwidth}
    \centering
%
%
\begin{tikzpicture}

\begin{axis}[%
width=0.951\figurewidth,
height=\figureheight,
at={(0\figurewidth,0\figureheight)},
scale only axis,
xmin=-2,
xmax=3.99,
xlabel style={font=\color{white!15!black}},
xlabel={$x$},
xtick={-2,-1,1,2,3},
xticklabels={-2,-1,1,2,$\frac{2\alpha + 1/5}{\alpha -1/5}$},
ytick={-1},
extra y ticks={0.2},
extra y tick style={
    yticklabel style={xshift=-3ex, yshift=.8ex, anchor=west}
},
extra y tick labels={0.2},
ymin=-2,
ymax=1.99,
ylabel style={font=\color{white!15!black}},
ylabel={$\nabla_x l$},
axis background/.style={fill=white},
axis x line=middle,
axis y line=middle,
legend style={legend cell align=left, align=left, draw=white!15!black},
legend style={at={(-0.1,1.0)},anchor=north west}
]
\addplot [color=black,thick]
  table[row sep=crcr]{%
-2	-1.6\\
-1.9	-1.52\\
-1.8	-1.44\\
-1.7	-1.36\\
-1.6	-1.28\\
-1.5	-1.2\\
-1.4	-1.12\\
-1.3	-1.04\\
-1.2	-0.96\\
-1.1	-0.88\\
-1	-0.8\\
-0.9	-0.72\\
-0.8	-0.64\\
-0.7	-0.56\\
-0.6	-0.48\\
-0.5	-0.4\\
-0.4	-0.32\\
-0.3	-0.24\\
-0.2	-0.16\\
-0.1	-0.08\\
0	0\\
};
\addlegendentry{$\nabla_x l$}

\addplot [color=black,thick,forget plot]
  table[row sep=crcr]{%
0.001	0.2002\\
0.101	0.2202\\
0.201	0.2402\\
0.301	0.2602\\
0.401	0.2802\\
0.501	0.3002\\
0.601	0.3202\\
0.701	0.3402\\
0.801	0.3602\\
0.901	0.3802\\
1.001	0.4002\\
1.101	0.4202\\
1.201	0.4402\\
1.301	0.4602\\
1.401	0.4802\\
1.501	0.5002\\
1.601	0.5202\\
1.701	0.5402\\
1.801	0.5602\\
1.901	0.5802\\
2.001	0.6002\\
2.101	0.6202\\
2.201	0.6402\\
2.301	0.6602\\
2.401	0.6802\\
2.501	0.7002\\
2.601	0.7202\\
2.701	0.7402\\
2.801	0.7602\\
2.901	0.7802\\
3.001	0.8008\\
3.101	0.8808\\
3.201	0.9608\\
3.301	1.0408\\
3.401	1.1208\\
3.501	1.2008\\
3.601	1.2808\\
3.701	1.3608\\
3.801	1.4408\\
3.901	1.5208\\
4.001	1.6008\\
};

\addplot [color=black,only marks, mark size=1.5pt,forget plot]
 table[row sep=crcr]{%
 0 0\\
 };

\addplot [color=blue, line width=1pt, densely dotted]
  table[row sep=crcr]{%
-3	-0.4\\
-2.9	-0.38\\
-2.8	-0.36\\
-2.7	-0.34\\
-2.6	-0.32\\
-2.5	-0.3\\
-2.4	-0.28\\
-2.3	-0.26\\
-2.2	-0.24\\
-2.1	-0.22\\
-2	-0.2\\
-1.9	-0.18\\
-1.8	-0.16\\
-1.7	-0.14\\
-1.6	-0.12\\
-1.5	-0.1\\
-1.4	-0.08\\
-1.3	-0.06\\
-1.2	-0.04\\
-1.1	-0.02\\
-1	0\\
-0.9	0.02\\
-0.8	0.04\\
-0.7	0.06\\
-0.6	0.08\\
-0.5	0.1\\
-0.4	0.12\\
-0.3	0.14\\
-0.2	0.16\\
-0.1	0.18\\
0	0.2\\
};
\addlegendentry{$\nabla f$}

\addplot [color=blue, line width=1pt, densely dotted,forget plot]
  table[row sep=crcr]{%
3	0.8\\
3.1	0.82\\
3.2	0.84\\
3.3	0.86\\
3.4	0.88\\
3.5	0.9\\
3.6	0.92\\
3.7	0.94\\
3.8	0.96\\
3.9	0.98\\
4	1\\
};

\addplot [color=red, dashed]
  table[row sep=crcr]{%
2	0\\
2.1	0.0800000000000001\\
2.2	0.16\\
2.3	0.24\\
2.4	0.32\\
2.5	0.4\\
2.6	0.48\\
2.7	0.559999999999999\\
2.8	0.64\\
2.9	0.72\\
3	0.8\\
};
\addlegendentry{$\alpha (x-2)$}

\end{axis}
\end{tikzpicture}
%
%
\begin{tikzpicture}

\begin{axis}[%
width=0.951\figurewidth,
height=\figureheight,
at={(0\figurewidth,0\figureheight)},
scale only axis,
xmin=-2,
xmax=3.99,
xlabel style={font=\color{white!15!black}},
xlabel={$x$},
axis x line=middle,
axis y line=middle,
ymin=-.1,
ymax=2.99,
ylabel style={font=\color{white!15!black}},
ylabel={$F_\alpha$},
axis background/.style={fill=white},
legend style={legend cell align=left, align=left, draw=white!15!black},
legend style={at={(0.65,.95)},anchor=north west},
]
\addplot [color=black,thick]
  table[row sep=crcr]{%
-2	1.7\\
-1.9	1.544\\
-1.8	1.396\\
-1.7	1.256\\
-1.6	1.124\\
-1.5	1\\
-1.4	0.884\\
-1.3	0.776\\
-1.2	0.676\\
-1.1	0.584\\
-1	0.5\\
-0.9	0.424\\
-0.8	0.356\\
-0.7	0.296\\
-0.6	0.244\\
-0.5	0.2\\
-0.4	0.164\\
-0.3	0.136\\
-0.2	0.116\\
-0.0999999999999999	0.104\\
0	0.1\\
0.1	0.121\\
0.2	0.144\\
0.3	0.169\\
0.4	0.196\\
0.5	0.225\\
0.6	0.256\\
0.7	0.289\\
0.8	0.324\\
0.9	0.361\\
1	0.4\\
1.1	0.441\\
1.2	0.484\\
1.3	0.529\\
1.4	0.576\\
1.5	0.625\\
1.6	0.676\\
1.7	0.729\\
1.8	0.784\\
1.9	0.841\\
2	0.9\\
2.1	0.961\\
2.2	1.024\\
2.3	1.089\\
2.4	1.156\\
2.5	1.225\\
2.6	1.296\\
2.7	1.369\\
2.8	1.444\\
2.9	1.521\\
3	1.6\\
3.1	1.684\\
3.2	1.776\\
3.3	1.876\\
3.4	1.984\\
3.5	2.1\\
3.6	2.224\\
3.7	2.356\\
3.8	2.496\\
3.9	2.644\\
4	2.8\\
};
\addlegendentry{$F_\alpha$}

\addplot [color=black, dashed]
  table[row sep=crcr]{%
3	1.6\\
3.1	1.681\\
3.2	1.764\\
3.3	1.849\\
3.4	1.936\\
3.5	2.025\\
3.6	2.116\\
3.7	2.209\\
3.8	2.304\\
3.9	2.401\\
4	2.5\\
};
\addlegendentry{$f$}

\addplot [color=black, dashed,forget plot]
  table[row sep=crcr]{%
-2	0.1\\
-1.9	0.081\\
-1.8	0.064\\
-1.7	0.049\\
-1.6	0.036\\
-1.5	0.025\\
-1.4	0.016\\
-1.3	0.00899999999999999\\
-1.2	0.004\\
-1.1	0.001\\
-1	0\\
-0.9	0.001\\
-0.8	0.004\\
-0.7	0.009\\
-0.6	0.016\\
-0.5	0.025\\
-0.4	0.036\\
-0.3	0.049\\
-0.2	0.064\\
-0.1	0.081\\
0	0.1\\
};

\end{axis}
\end{tikzpicture}
%
%
\begin{tikzpicture}

\begin{axis}[%
width=0.951\figurewidth,
height=\figureheight,
at={(0\figurewidth,0\figureheight)},
scale only axis,
xmin=-2,
xmax=3.99,
xlabel style={font=\color{white!15!black}},
xlabel={$x$},
ymin=0,
ymax=2.99,
ylabel style={font=\color{white!15!black}},
ylabel={$d$},
axis background/.style={fill=white},
axis x line=middle,
axis y line=middle,
legend style={legend cell align=left, align=left, draw=white!15!black}
]
\addplot [color=black,thick,forget plot]
  table[row sep=crcr]{%
-3	2.8\\
-2.9	2.623\\
-2.8	2.452\\
-2.7	2.287\\
-2.6	2.128\\
-2.5	1.975\\
-2.4	1.828\\
-2.3	1.687\\
-2.2	1.552\\
-2.1	1.423\\
-2	1.3\\
-1.9	1.183\\
-1.8	1.072\\
-1.7	0.967\\
-1.6	0.868\\
-1.5	0.775\\
-1.4	0.688\\
-1.3	0.607\\
-1.2	0.532\\
-1.1	0.463\\
-1	0.4\\
-0.9	0.343\\
-0.8	0.292\\
-0.7	0.247\\
-0.6	0.208\\
-0.5	0.175\\
-0.4	0.148\\
-0.3	0.127\\
-0.2	0.112\\
-0.1	0.103\\
0	0.1\\
};

\addplot [color=black,only marks, mark size=1.5pt,forget plot]
 table[row sep=crcr]{%
 0 0.1\\
 };

\addplot [color=black,thick,forget plot]
  table[row sep=crcr]{%
0.001	0.075150075\\
0.101	0.090915075\\
0.201	0.108180075\\
0.301	0.126945075\\
0.401	0.147210075\\
0.501	0.168975075\\
0.601	0.192240075\\
0.701	0.217005075\\
0.801	0.243270075\\
0.901	0.271035075\\
1.001	0.300300075\\
1.101	0.331065075\\
1.201	0.363330075\\
1.301	0.397095075\\
1.401	0.432360075\\
1.501	0.469125075\\
1.601	0.507390075\\
1.701	0.547155075\\
1.801	0.588420075\\
1.901	0.631185075\\
2.001	0.675450075\\
2.101	0.721215075\\
2.201	0.768480075\\
2.301	0.817245075\\
2.401	0.867510075\\
2.501	0.919275075\\
2.601	0.972540075\\
2.701	1.027305075\\
2.801	1.083570075\\
2.901	1.141335075\\
3.001	1.2006003\\
3.101	1.2636603\\
3.201	1.3327203\\
3.301	1.4077803\\
3.401	1.4888403\\
3.501	1.5759003\\
3.601	1.6689603\\
3.701	1.7680203\\
3.801	1.8730803\\
3.901	1.9841403\\
4.001	2.1012003\\
4.101	2.2242603\\
4.201	2.3533203\\
4.301	2.4883803\\
4.401	2.6294403\\
4.501	2.7765003\\
4.601	2.9295603\\
4.701	3.0886203\\
4.801	3.2536803\\
4.901	3.4247403\\
5.001	3.6018003\\
};

\end{axis}
\end{tikzpicture}%
    \end{minipage}
    \caption{This figure shows the values of $\nabla_x l$, $F_\alpha$, and $d$ for $\alpha=1/10$ (left column) and $\alpha=4/5$ (right column). Top row: The solid thick black line represents $\nabla_x l$, which is discontinuous at the origin, where it takes the value zero (the origin is the minimizer of \eqref{eq:simpEx}). For values $x\leq 0$, $\nabla_x l$ is given by $\min \{ \nabla f(x), \alpha x\}$ and for values $x\geq 2$, $\nabla_x l$ is given by $\max \{ \nabla f(x), \alpha (x-2) \}$, which is represented by the lines in blue and in red. Middle row: The solid thick black line represents $F_\alpha$, which is continuous and has its minimum at the origin (the origin is the minimizer of \eqref{eq:simpEx}). The objective function $f$ is indicated with dashed lines. Last row: The function $d$ is discontinuous at the origin for $\alpha \neq 1/5$, unbounded below for $\alpha < 1/5$, and unbounded above for $\alpha > 1/5$. For $\alpha < 1/5$, $d(x)$ is upper bounded by $f_{I_{x}}^*$, that is, $d(x) \leq f^* =f^*_{\{1\}}= 0.1$ for $x\leq 0$ and $d(x)\leq f^*_{\{ \}} = f^*_{\{2\}}=0$ for $x>0$, where $g_1(x)=x$ and $g_2(x)=2-x$. As we will show in Section~\ref{Sec:Discrete}, this holds more generally provided that $f$ and $C$ are convex.}
    \label{fig:simpExample}
\end{figure}

The plots are shown in Figure~\ref{fig:simpExample} for two different $\alpha$. The left column is prototypical for $\alpha \leq 1/5$, the right column for $\alpha >1/5$, where $1/5$ amounts to the Hessian of $f$. It is important to note that $\nabla_x l$ is discontinuous at the origin, but nonetheless unique. In the continuous-time case, the discontinuity at the origin is less of an issue, since the solutions to $\dot{x}(t)^+=-\nabla_x l (x(t),\lambda(t))$ approach the origin either from $x(t)>0$ or from $x(t)<0$ and never cross the origin. When the solution approaches the origin from negative values, $x(t)<0$, the velocity $\dot{x}(t)$ continuously reduces to zero for $t\rightarrow \infty$. If the solutions approach the origin from positive values, $x(t)>0$, the velocity continuously reduces to $\dot{x}(t)^-=-0.2$ at which point it instantly drops to zero. Hence, if $x(t)$ approaches the origin from positive values, the convergence is in finite time. The origin is therefore a stable and attractive equilibrium in the sense of Lyapunov.

In discrete time, the situation changes drastically. Starting from a generic initial condition, $x_0>0$, the solution to $x_{k+1}=x_{k} -T \nabla_x l (x_k,\lambda_k)$ crosses the origin and eventually always approaches the origin from $x_k<0$ (provided that $\alpha$ and $T$ are small enough). For small $\alpha$ and $T$, the origin can therefore be viewed as a semi-permeable membrane; solutions cross from $x_k>0$ to $x_{k+1}<0$, but not vice versa. The origin is \emph{not} a stable equilibrium, since trajectories starting arbitrarily close to the origin will jump to a negative $x_1$, such that $|x_1|\geq |0.2 T - (1-0.2 T) x_0|\approx 0.2 T$. (Hence, no matter how small we choose $\delta>0$, there exists an initial condition $x_0$ with $|x_0|<\delta$ such that $|x_k|\geq 0.1T$ for some $k\geq 0$.) We therefore conclude that any attempt to find a continuous Lyapunov function for proving convergence in discrete time is doomed to fail. Indeed, as we will show in the following, proving convergence of \eqref{eq:disAlgorithm} hinges on the analysis of the \emph{discontinuous} function $d(x)$, which can be shown to be monotonically increasing along trajectories $x_k$ for small enough $\alpha$ and $T$. The analysis can also be interpreted as choosing an appropriate sequence of nested invariant sets, which generalizes the above discussion of the origin acting as a semi-permeable membrane. Each of these invariant sets can then be shown to be attractive, whereby trajectories traverse most of these invariant sets in finite time.

We would like to emphasize that even though the origin is \emph{not} stable in the sense of Lyapunov (in discrete time), it is still \emph{attractive}; that is, $x_k$ converges to origin for small enough $\alpha$ and $T$. From Figure~\ref{fig:simpExample}, it follows that $\alpha T \leq 1$ is necessary for ensuring that trajectories approach the origin from $x_k<0$ for large $k$. If $\alpha T>1$, we observe oscillations about the origin. We further note that already the analysis of a two-dimensional problem with multiple linear constraints appears to be very challenging due to the discontinuity of $\nabla_x l$ and the discrete nature of \eqref{eq:disAlgorithm}, which results in a multitude of different constraints that may or may not become active over the course of the optimization.

\section{The Discrete-Time Case}\label{Sec:Discrete}
This section analyzes the convergence of algorithm \eqref{eq:disAlgorithm} to stationary points of \eqref{eq:fundProb}. In contrast to the continuous-time setting, where a trajectory starting from $x(0)\in C$ is guaranteed to remain feasible, a discrete trajectory $x_k$ may become infeasible in the course of the optimization, even if $x_0\in C$. This is due to the finite length of each step of the discrete algorithm and the fact that only the active constraints $I_{x_k}$ are taken into account. While this potentially saves computation and distinguishes our algorithm from other methods, it also complicates the analysis. As we discussed in the previous section, while trajectories still converge to the minimizer of \eqref{eq:fundProb} (assuming convexity and appropriately chosen parameters $T$ and $\alpha$), the minimizer may not correspond to a stable equilibrium in the sense of Lyapunov.

In Section~\ref{Sec:Example}, we saw that for $\alpha T \leq 1$, the solutions $x_k$ of algorithm \eqref{eq:disAlgorithm} cross the origin from $x_k>0$ to $x_{k+1}<0$, but not vice versa. The property is crucial for guaranteeing convergence, as it excludes oscillations about the origin. We can therefore visualize the boundary of the feasible set as a semi-permeable membrane; trajectories can pass from the feasible to the infeasible region, but not the other way. The following lemma will be the first step in making this observation precise.

\begin{lemma}\label{Lem:discrete}
Let $C$ be convex. Provided that $\alpha T \leq 1$, the inequality constraints at time $k$ for which the corresponding $\lambda_{ki}$ is nonzero will remain active at time $k+1$. In other words, $\lambda_{ki}>0$ implies $g_i(x_{k+1})\leq 0$.
\end{lemma}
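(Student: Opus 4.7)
The plan is to combine the stationarity condition \eqref{eq:statDual} for the dual variable $\lambda_k$ with complementary slackness to pin down the first-order increment of $g_i$ along the step, and then use concavity of $g_i$ (a consequence of $C$ being convex) to translate this increment information into sign information on $g_i(x_{k+1})$.

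First, since $\lambda_k\in D_{x_k}$ forces $\lambda_{ki}=0$ for every inactive index $i\not\in I_{x_k}$, the hypothesis $\lambda_{ki}>0$ immediately gives $i\in I_{x_k}$, and hence $g_i(x_k)\leq 0$. Next, I would unpack the subdifferential condition \eqref{eq:statDual} in the inequality block: the components of $\lambda_k$ corresponding to inequalities lie in $\mathbb{R}_{\geq 0}^{|I_{x_k}|}$, and the subdifferential of the indicator of this orthant has a zero component at any strictly positive coordinate of $\lambda_k$. Combined with primal feasibility $v(x_k)\in V_\alpha(x_k)$, this is precisely complementary slackness and yields the \emph{equality}
\begin{equation*}
\nabla g_i(x_k)\T v(x_k) + \alpha\, g_i(x_k) = 0
\end{equation*}
for every index $i$ with $\lambda_{ki}>0$. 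Substituting $v(x_k)=(x_{k+1}-x_k)/T$ I obtain $\nabla g_i(x_k)\T (x_{k+1}-x_k) = -\alpha T\, g_i(x_k)$.

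The second step uses convexity of $C$. Under the standing Mangasarian--Fromovitz assumption together with Assumption~\ref{Ass:Conv}, each inequality constraint function $g_i$ may be taken concave, so the global first-order inequality
\begin{equation*}
g_i(x_{k+1})\leq g_i(x_k) + \nabla g_i(x_k)\T (x_{k+1}-x_k) = (1-\alpha T)\, g_i(x_k)
\end{equation*}
holds. Since $g_i(x_k)\leq 0$ and $\alpha T\in(0,1]$, the factor $(1-\alpha T)$ is non-negative and I conclude $g_i(x_{k+1})\leq 0$.

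The main obstacle, which is mild, is the careful extraction of the \emph{equality} (rather than merely the primal feasibility inequality $\nabla g_i(x_k)\T v(x_k)+\alpha g_i(x_k)\geq 0$) from the subdifferential inclusion \eqref{eq:statDual}, and verifying that the convexity of $C$ together with the constraint qualification permits each individual $g_i$ to be taken concave so that the one-step linear bound is valid globally. The role of the assumption $\alpha T\leq 1$ becomes transparent at the last step: it is exactly what makes the contraction factor $(1-\alpha T)$ non-negative, which is what preserves the sign of $g_i(x_k)$ under the step.
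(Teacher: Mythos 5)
Your proposal is correct and follows essentially the same route as the paper's proof: extract complementary slackness from the stationarity condition \eqref{eq:statDual}, substitute $v(x_k)=(x_{k+1}-x_k)/T$, apply the first-order concavity bound $g_i(x_{k+1})\leq g_i(x_k)+\nabla g_i(x_k)\T(x_{k+1}-x_k)=(1-\alpha T)g_i(x_k)$, and conclude from $g_i(x_k)\leq 0$ and $1-\alpha T\geq 0$. The only cosmetic difference is that the paper keeps the factor $\lambda_{ki}$ multiplied through rather than dividing it out, and, like you, it simply takes as given that convexity of $C$ allows the description by concave $g$ and affine $h$.
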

\begin{proof}
The stationarity condition \eqref{eq:statDual}, which applies in the same way to the discrete algorithm \eqref{eq:disAlgorithm} (it suffices to replace $x(t)$ with $x_k$, $\lambda(t)$ by $\lambda_k$, and $\dot{x}(t)^+=(x_{k+1}-x_k)/T$), implies that $\lambda_{ki} \nabla \bar{g}_i(x_{k})\T (x_{k+1}-x_k) = -\alpha T \bar{g}_i(x_{k}) \lambda_{ki}$ for all $i\in \{1,2,\dots,n_\text{h}+n_\text{g}\}$ (complementary slackness). \hchange{Due to the fact that $C$ is convex, there exist linear functions $h$ and concave functions $g$ which describe $C$.} Thus, it follows that
\begin{align*}
h(x_{k+1})&= h(x_k) + \nabla h(x_k)\T (x_{k+1}-x_k),\\
g(x_{k+1})&\leq g(x_k) + \nabla g(x_k)\T (x_{k+1}-x_k).
\end{align*}
Combined with the fact that $\lambda_k\in \mathbb{R}^{n_\text{h}} \times \mathbb{R}^{n_\text{g}}_{\geq 0}$, this implies
\begin{equation*}
\lambda_{ki} \bar{g}_i(x_{k+1}) \leq (1-\alpha T) \lambda_{ki} \bar{g}_i(x_k),
\end{equation*}
for any $i \in \{1,2,\dots,n_\text{h}+n_\text{g}\}$. The result follows by noting that $\lambda_{ki} \bar{g}_i(x_k) \leq 0$ and $1-\alpha T\geq 0$.
\end{proof}
Lemma~\ref{Lem:discrete} implies that $\lambda_k \in D_{x_{k+1}}$, ensuring that $\lambda_k$ is a feasible candidate for \eqref{eq:dual2}, or \eqref{eq:dual}, at time $k+1$. Lemma~\ref{Lem:discrete} can therefore be viewed as the discrete-time version of Lemma~\ref{Lemma:cont1}. As in the continuous-time case, Lemma~\ref{Lem:discrete} will be of paramount importance for proving convergence.


We are now ready to prove Proposition~\ref{Prop:GD}. We will divide the proof into several smaller claims:

\begin{claim}\label{Claim:1}
Let the assumption of Proposition~\ref{Prop:GD} be satisfied. Then, 
the sequence $d(x_k)$ is monotonically increasing and bounded above by $f^*$.
\end{claim}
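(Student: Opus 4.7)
\textbf{Proof plan for Claim~\ref{Claim:1}.}

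The key observation that drives monotonicity is that, by Lemma~\ref{Lem:discrete}, the multiplier $\lambda_k$ obtained at iteration $k$ belongs to $D_{x_{k+1}}$, so it is a feasible (but not necessarily optimal) candidate for the dual problem that defines $d(x_{k+1})$. Hence
\begin{equation*}
d(x_{k+1}) \;\geq\; l(x_{k+1},\lambda_k) - \frac{1}{2\alpha}\,|\nabla_x l(x_{k+1},\lambda_k)|^2,
\end{equation*}
while $d(x_k) = l(x_k,\lambda_k) - \frac{1}{2\alpha}|\nabla_x l(x_k,\lambda_k)|^2$ by optimality of $\lambda_k$ at $x_k$. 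The plan is therefore to show that the right-hand side above dominates $d(x_k)$, by exploiting the fact that the update rule $x_{k+1} = x_k - T\nabla_x l(x_k,\lambda_k)$ (which follows from the duality relation $v(x_k) = -\nabla_x l(x_k,\lambda_k)$ recorded below \eqref{eq:Lag}) is a single gradient-descent step on the function $y\mapsto l(y,\lambda_k)$.

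To carry this out, I would exploit the fact that, under Assumption~\ref{Ass:Conv}, $C$ convex forces $h$ to be affine and $g$ to be concave, so that for $\lambda_k\in D_{x_k}$ with $\lambda_{k,g}\ge 0$, the function $l(\cdot,\lambda_k) = f - \lambda_k^\TT \bar{g}$ is strongly convex with constant $\bar{\mu}_l(\lambda_k)\ge \mu$ and smooth with constant $\bar{L}_l(\lambda_k)\le L_l$. Writing $g_k := \nabla_x l(x_k,\lambda_k)$, I would combine two standard inequalities:
\begin{align*}
l(x_k,\lambda_k) - l(x_{k+1},\lambda_k) &\;\leq\; T\bigl(1-\tfrac{\mu T}{2}\bigr)|g_k|^2 \quad \text{(from }\bar{\mu}_l\text{-strong convexity)},\\
|g_{k+1}|^2 &\;\leq\; \Bigl(1 - \tfrac{2\mu L_l T}{\mu + L_l}\Bigr)|g_k|^2 \quad \text{(from the smoothness/strong-convexity co-coercivity inequality),}
\end{align*}
valid under the step-size condition $T\le 2/(L_l+\mu)$. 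Substituting and requiring
\begin{equation*}
T\bigl(1-\tfrac{\mu T}{2}\bigr) \;\leq\; \frac{1}{2\alpha}\cdot\frac{2\mu L_l T}{\mu+L_l}
\end{equation*}
reduces, after cancelling $T$ and simplifying, to $\alpha(\mu+L_l)(1-\mu T/2) \le \mu L_l$, which at the boundary $T = 2/(\mu+L_l)$ collapses exactly to $\alpha \le \mu$ and is easier to satisfy for smaller $T$. Hence the standing hypotheses $\alpha < \mu$ and $T\le 2/(L_l+\mu)$ give the desired inequality
\begin{equation*}
l(x_{k+1},\lambda_k) - \frac{1}{2\alpha}|g_{k+1}|^2 \;\geq\; l(x_k,\lambda_k) - \frac{1}{2\alpha}|g_k|^2 \;=\; d(x_k),
\end{equation*}
and combined with the first display yields $d(x_{k+1})\ge d(x_k)$.

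For the upper bound, I would use strong convexity of $l(\cdot,\lambda_k)$ in the form $\min_{y}l(y,\lambda_k) \ge l(x_k,\lambda_k) - \tfrac{1}{2\bar{\mu}_l(\lambda_k)}|g_k|^2$. Since $\alpha < \mu \le \bar{\mu}_l(\lambda_k)$, this gives
\begin{equation*}
d(x_k) = l(x_k,\lambda_k) - \tfrac{1}{2\alpha}|g_k|^2 \;\leq\; l(x_k,\lambda_k) - \tfrac{1}{2\bar{\mu}_l(\lambda_k)}|g_k|^2 \;\leq\; \min_{y\in\mathbb{R}^n}l(y,\lambda_k).
\end{equation*}
A weak-duality argument for the original problem \eqref{eq:fundProb} completes the proof: for any $y\in C$ one has $h(y)=0$ and $g(y)\ge 0$, so $\lambda_k^\TT\bar{g}(y) = \lambda_{k,g}^\TT g(y) \ge 0$ (using $\lambda_{k,g}\ge 0$), giving $l(y,\lambda_k)\le f(y)$ and therefore $\min_y l(y,\lambda_k) \le \min_{y\in C} f(y) = f^*$.

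The principal obstacle is the algebraic content of Step~3: carefully matching the loss of objective value under one gradient step against the contraction of the gradient norm so that the precise threshold $\alpha<\mu$ emerges. The co-coercivity-style inequality for smooth strongly convex functions is essential here, and one must be careful that the step-size bound in the proposition is stated with $(L_l,\mu)$ rather than with the tighter $(\bar{L}_l(\lambda_k),\bar{\mu}_l(\lambda_k))$, which means the inequalities must be used in their looser forms throughout.
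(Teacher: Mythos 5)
Your overall architecture is the same as the paper's: both proofs hinge on Lemma~\ref{Lem:discrete} to certify $\lambda_k\in D_{x_{k+1}}$, lower-bound $d(x_{k+1})$ by evaluating the dual objective at the suboptimal candidate $\lambda_k$, and then analyze one gradient step of $y\mapsto l(y,\lambda_k)$. Your upper-bound argument ($d(x_k)\le \min_y l(y,\lambda_k)\le f^*$ via strong convexity plus weak duality) is correct and is essentially the paper's Lemma~\ref{Lem:discrete2}.

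The monotonicity step, however, does not close as written. Your own reduction shows you need $\alpha(\mu+L_l)\bigl(1-\mu T/2\bigr)\le \mu L_l$, and you assert this is ``easier to satisfy for smaller $T$.'' It is the opposite: the left-hand side \emph{increases} as $T$ decreases, and as $T\to 0$ the condition becomes $\alpha\le \mu L_l/(\mu+L_l)$, which is strictly stronger than $\alpha<\mu$. Concretely, with $\mu=L_l=1$, $\alpha=0.9<\mu$, and $T$ small, your condition reads $1.8\le 1$. So under the stated hypotheses $\alpha<\mu$ and $T\le 2/(L_l+\mu)$, your chain of inequalities succeeds only at the maximal step size. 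The culprit is the co-coercivity contraction $|g_{k+1}|^2\le\bigl(1-\tfrac{2\mu L_l T}{\mu+L_l}\bigr)|g_k|^2$, which is tight at $T=2/(\mu+L_l)$ but too weak below it.

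The repair---and the route the paper actually takes---is to bound the gradient decrease spectrally rather than by co-coercivity. Writing $g_{k+1}=(I-TM_k)g_k$ with $M_k=\int_0^1 \Delta_x l\bigl(x_k+t(x_{k+1}-x_k),\lambda_k\bigr)\,\mathrm{d}t$ symmetric with spectrum in $[\mu,L_l]$, one gets
\begin{equation*}
|g_{k+1}|^2\;\le\;\max\bigl\{(1-\mu T)^2,\,(1-L_l T)^2\bigr\}\,|g_k|^2\;=\;(1-\mu T)^2|g_k|^2
\quad\text{for }T\le 2/(\mu+L_l),
\end{equation*}
hence $|g_k|^2-|g_{k+1}|^2\ge 2\mu T(1-\mu T/2)|g_k|^2$. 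Paired with your first inequality $l(x_k,\lambda_k)-l(x_{k+1},\lambda_k)\le T(1-\mu T/2)|g_k|^2$, the factor $T(1-\mu T/2)$ appears on both sides, and the requirement collapses to $\alpha\le\mu$ for \emph{every} admissible $T$, recovering the paper's bound $d(x_{k+1})\ge d(x_k)+T(1-\mu T/2)(\mu/\alpha-1)\,|g_k|^2$.
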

\begin{proof}
The fact that $d(x_k)$ is bounded above by $f^*$ follows from Lemma~\ref{Lem:discrete2} (see Appendix~\ref{App:d}). We note that due to Lemma~\ref{Lem:discrete}, the multiplier $\lambda_{k}$ is a feasible candidate for the dual \eqref{eq:dual} (or \eqref{eq:dual2}) at time $k+1$; that is, $\lambda_{k} \in D_{x_{k+1}}$. This means that
\begin{align*}
d(x_{k+1}) \geq l(x_{k+1},\lambda_k) - \frac{1}{2\alpha} |\nabla_x l(x_{k+1},\lambda_k)|^2.
\end{align*}
Due to the strong convexity of $l(\cdot, \lambda_k)$, for a fixed $\lambda_k$, it follows that
\begin{equation*}
l(x_{k+1},\lambda_k) \geq l(x_k,\lambda_k) + T \nabla_x l(x_k,\lambda_k)\T v_k + \frac{\mu}{2} T^2 |v_k|^2= l(x_k,\lambda_k) -T |v_k|^2 + \frac{\mu}{2} T^2 |v_k|^2.
\end{equation*}
Moreover, by using Taylor's theorem, we can relate the gradient $\nabla_x l(x_{k+1},\lambda_k)$ to the gradient $\nabla_x l(x_k,\lambda_k)$ in the following way:
\begin{equation*}
\nabla_x l(x_{k+1},\lambda_k)=\nabla_x l(x_k,\lambda_k) + T \Delta_x l(\xi_k,\lambda_k)~~ v_k,
\end{equation*}
where $\Delta_x l$ denotes the second derivative of $l$ with respect to $x$, and $\xi_k$ lies between $x_k$ and $x_{k+1}$. Hence, we obtain the following lower bound for $d(x_{k+1})$:
\begin{align*}
d(x_{k+1})\geq d(x_k) + \frac{T}{\alpha} v_k\T \Delta_x l~ v_k - \frac{T^2}{2\alpha} v_k\T (\Delta_x l)^2~~ v_k - T |v_k|^2 + \frac{\mu}{2} T^2 |v_k|^2,
\end{align*}
where the arguments of the Hessian $\Delta_x l(\xi_k,x_k)$ have been omitted to simplify notation. We note that the Hessian $\Delta_x l$ is positive definite due to the convexity of $l(\cdot, \lambda_k)$ and has eigenvalues that are lower bounded by $\mu$ and upper bounded by $L_l$. Moreover, the matrix $(\Delta_x l)^2$ has the same eigenvectors as $\Delta_x l$, which means that
\begin{equation*}
v_k\T \left( \Delta_x l T - \frac{1}{2} \Delta_x l^2 T^2 \right) v_k \geq |v_k|^2 \min_{s \in [\mu T, L_l T]} s - s^2/2.
\end{equation*}
It can be shown that this minimum is lower bounded by $\mu T ( 1- \mu T/2)$ as long as $T\leq 2/(L_l+\mu)$.\footnote{The choice $T=2/(L_l+\mu)$ corresponds to the maximizer of $\min_{s \in [\mu T, L_l T]} s-s^2/2$ with respect to $T$.} This yields
\begin{equation}
d(x_{k+1})\geq d(x_k) + \underbrace{T \left(1-\frac{\mu T}{2}\right) \left(\frac{\mu}{\alpha} - 1\right)}_{=c_1} |v_k|^2. \label{eq:claim1}
\end{equation} 
From $T\leq 2/(L_l+\mu)$ and $\alpha < \mu$ we conclude that $c_1>0$, which proves the claim.
\end{proof}

\begin{claim}\label{Claim:convV}
Let the assumptions of Proposition~\ref{Prop:GD} be satisfied. The velocity $(x_{k+1}-x_k)/T$ is guaranteed to converge \hchange{and satisfies}
\begin{align*}
    \min_{j\in \{0,1,\dots,k\}} |-\nabla f(x_j)+R_j|^2 \leq \frac{f^*-d(x_0)}{c_1 (k+1)}, \quad \forall k\geq 0, \quad \forall x_0\in \mathbb{R}^n,
\end{align*}
where $c_1=T(\mu/\alpha -1) (1-\mu T/2)>0$ is constant.
\end{claim}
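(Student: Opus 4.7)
The plan is to combine the per-step progress bound from Claim~\ref{Claim:1} with the uniform upper bound $d(x_k) \leq f^*$ and then telescope. Concretely, Claim~\ref{Claim:1} establishes
\begin{equation*}
d(x_{k+1}) - d(x_k) \geq c_1 |v_k|^2, \qquad v_k = (x_{k+1}-x_k)/T = -\nabla f(x_k)+R_k,
\end{equation*}
and Lemma~\ref{Lem:discrete2} in Appendix~\ref{App:d} guarantees $d(x_k)\leq f^*$ for every $k$. This is essentially a discrete analogue of the decrease identity \eqref{eq:dec2} that drove the continuous-time convergence argument.

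First I would sum the inequality from the previous claim over $j=0,\dots,k$, yielding
\begin{equation*}
d(x_{k+1}) - d(x_0) \geq c_1 \sum_{j=0}^{k} |v_j|^2.
\end{equation*}
Using $d(x_{k+1}) \leq f^*$ on the left and dividing by $c_1 (k+1)$ gives the average bound
\begin{equation*}
\frac{1}{k+1} \sum_{j=0}^{k} |v_j|^2 \leq \frac{f^* - d(x_0)}{c_1 (k+1)},
\end{equation*}
from which the minimum over $j\in\{0,\dots,k\}$ is bounded by the same quantity. Substituting $v_j = -\nabla f(x_j)+R_j$ delivers the stated min-rate bound for any $x_0\in\mathbb{R}^n$.

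For the convergence statement on $(x_{k+1}-x_k)/T$ itself, I would observe that since the partial sums $\sum_{j=0}^{k}|v_j|^2$ are monotone and bounded above by $(f^*-d(x_0))/c_1<\infty$, the series $\sum_{j\geq 0} |v_j|^2$ converges, and hence its general term satisfies $|v_j|\to 0$ as $j\to\infty$. This yields convergence of the velocity to zero.

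There is no real obstacle here: the claim is a straightforward telescoping consequence of the monotone progress lemma already proved, paired with the $d\leq f^*$ upper bound from Appendix~\ref{App:d}. The only small point worth stressing is that $c_1>0$ is guaranteed by the hypotheses $T\leq 2/(L_l+\mu)$ and $\alpha<\mu$, so the division is legitimate; and that summability of $|v_j|^2$ does imply $|v_j|\to 0$, which gives the qualitative convergence claim in addition to the quantitative $\mathcal{O}(1/k)$ rate on the running minimum.
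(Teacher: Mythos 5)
Your proposal is correct and follows essentially the same route as the paper: telescope the per-step bound $d(x_{j+1})-d(x_j)\geq c_1|v_j|^2$ from Claim~\ref{Claim:1}, cap $d(x_{k+1})$ by $f^*$ via Lemma~\ref{Lem:discrete2}, bound the running minimum by the average, and deduce $|v_j|\to 0$ from summability of $|v_j|^2$. No gaps.
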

\begin{proof}
The result follows from Claim~\ref{Claim:1} by expanding $d(x_{k+1})$ as a telescoping sum,
\begin{align*}
f^*\geq d(x_{k+1}) &= d(x_0) + \sum_{j=0}^{k} d(x_{j+1})-d(x_j) \\
&\geq d(x_0) + c_1\sum_{j=0}^{k} |v_j|^2,
\end{align*}
where \eqref{eq:claim1} has been used for the last step. \hchange{The fact that the sum of squares of $|v_k|$ is bounded implies convergence of $v_k$ to zero for large $k$. We further obtain}
\begin{align*}
    f^* &\geq d(x_0) + c_1 (k+1) \min_{j\in \{0,1,\dots,k\}} |-\nabla f(x_j) + R_j|^2,
\end{align*}
\hchange{which implies the desired inequality.}
\end{proof}

\hchange{In order to prove convergence of $x_k$ to $x^*$, we will consider modifications of \eqref{eq:fundProb}, where some inequality constraints are removed. The resulting optimal costs are denoted by}
\begin{equation}
f^*_I:=\min_{x\in \mathbb{R}^n} f(x) \quad \text{s.t.} \quad  h(x)=0, \quad g_i(x)\geq 0, \quad i\in I,\label{eq:fseq}
\end{equation}
\hchange{where $I$ is any subset of $\{1,\dots,n_\text{g}\}$. The minimum in \eqref{eq:fseq} is guaranteed to be attained, due to the assumptions on $f$ and $C$. It is clear that $f^*_{\{\}} \leq f^*_I \leq f^*$ and we will use $x_I^*$ to denote any minimizer of \eqref{eq:fseq} with $\lambda_I^*$ the corresponding multipliers that satisfy the Karush-Kuhn-Tucker conditions of \eqref{eq:fseq}.}

\begin{claim}\label{Claim:3}
Let the assumptions of Proposition~\ref{Prop:GD} be satisfied. Each level set $\{x\in \mathbb{R}^n~|~d(x)\geq f_{I}^*\}$, where $I$ is any subset of $\{1,2,\dots,n_\text{g}\}$, is closed, invariant and attractive. 

\hchange{Let these level sets be labelled in the order $S_0 \supset S_1 \supset \dots \supset S_q$, where $q\leq 2^{n_\text{g}}$ and where $S_0=\mathbb{R}^n$, $S_1$ corresponds to $d(x)\geq f_{\{ \} }^*$ and $S_q$ to $d(x)\geq f^*$. We further denote the $f_I^*$ corresponding to $S_j$ by $f_j^*$ for $j=1,\dots, q$, and therefore $f_1^*=f_{\{ \} }^* < f_2^* < \dots < f_q^*=f^*$. On each of these level sets, the velocity converges at a linear rate, that is, for any integer $j$ with $0\leq j < q$,}
\begin{equation*}
|v(x_k)|^2 \leq \frac{1}{c_1} (1-c_2 T)^{k-k_0} (f_{j+1}^*-d(x_{k_0})), \quad \forall k\geq k_0:~x_k, x_{k+1}\in S_j\setminus S_{j+1},
\end{equation*}
\hchange{where $c_1=T(\mu/\alpha -1) (1-\mu T/2)>0$ and $c_2=2 \alpha (1-\mu T/2) (\mu-\alpha)/(L_l-\alpha)$ are constant, and $0<c_2 T<1$.}
\end{claim}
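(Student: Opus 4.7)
My plan is to first establish the three structural properties claimed for the level sets and then derive the geometric rate on each annulus. Closedness of $\{x \mid d(x) \geq f^*_I\}$ follows directly from the upper semicontinuity of $d$ stated in Appendix~\ref{App:d}, since these are superlevel sets. Invariance is immediate from Claim~\ref{Claim:1}: since $d(x_k)$ is monotonically nondecreasing along the iterates, once $d(x_{k_0}) \geq f^*_I$ holds, it persists for all $k \geq k_0$. Because the family $\{f^*_I : I \subseteq \{1,\dots,n_\text{g}\}\}$ takes at most $2^{n_\text{g}}$ distinct values, the corresponding level sets form a finite totally-ordered chain, which I label $S_0 \supset S_1 \supset \dots \supset S_q$ in order of increasing threshold $f^*_j$, with $S_0 = \mathbb{R}^n$ and $S_q = \{d(x) \geq f^*\}$.

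Next, I would reduce the velocity estimate to a Polyak--{\L}ojasiewicz-type bound. Claim~\ref{Claim:1} gives $d(x_{k+1})-d(x_k) \geq c_1|v(x_k)|^2$, while $x_{k+1} \in S_j \setminus S_{j+1}$ forces $d(x_{k+1}) < f^*_{j+1}$, so
\[
    c_1|v(x_k)|^2 \;\leq\; f^*_{j+1}-d(x_k).
\]
Hence the claimed geometric bound on $|v(x_k)|^2$ follows at once from geometric decay of the gap $f^*_{j+1}-d(x_k)$. Rearranging Claim~\ref{Claim:1} once more, the contraction $f^*_{j+1}-d(x_{k+1}) \leq (1-c_2 T)(f^*_{j+1}-d(x_k))$ in turn reduces to a PL-type inequality of the form
\[
    |v(x)|^2 \;\geq\; \frac{2\alpha^2}{L_l - \alpha}\,\bigl(f^*_{j+1}-d(x)\bigr), \qquad x \in S_j\setminus S_{j+1},
\]
since a direct computation shows that $c_1 \cdot 2\alpha^2/(L_l-\alpha) = c_2 T$. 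Iterating this contraction from $k_0$ to $k$ and substituting back into the bound above yields the stated estimate for $|v(x_k)|^2$.

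The main obstacle is the PL-type inequality itself. To prove it, I would test the dual objective in \eqref{eq:dual2} at (a suitable feasible surrogate for) the KKT multiplier $\lambda^*_{j+1}$ of the reduced problem \eqref{eq:fseq} that defines $f^*_{j+1}$. Strong convexity of $l(\cdot,\lambda^*_{j+1})$ with constant $\mu$, together with smoothness $L_l$ and the stationarity at $x^*_{j+1}$, gives $l(x,\lambda^*_{j+1}) \geq f^*_{j+1}$ up to a correction controlled by $\nabla_x l(x,\lambda^*_{j+1})$; the $-|\nabla_x l|^2/(2\alpha)$ term in \eqref{eq:dual2} then produces the $1/\alpha$ factor, and the spectrum of the Hessian of $l$ yields the $(L_l-\alpha)^{-1}$ prefactor. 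The delicate point, and the part I expect to require the most care, is reconciling the active set $I_x$ at an iterate $x \in S_j\setminus S_{j+1}$ with the constraints active at $x^*_{j+1}$, so that the candidate multiplier lies in $D_x$; here Lemma~\ref{Lem:discrete} and the monotonicity of $d$ play a key role. Finally, attractivity of each $S_{j+1}$ follows from the geometric decay of the gap together with upper semicontinuity of $d$: if the iterates remained in $S_j\setminus S_{j+1}$ indefinitely, then $d(x_k) \to f^*_{j+1}$, and any accumulation point $\bar x$ would satisfy $d(\bar x) \geq f^*_{j+1}$, i.e., $\bar x \in S_{j+1}$.
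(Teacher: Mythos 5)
Your overall architecture matches the paper's: closedness from upper semicontinuity, invariance from the monotonicity in Claim~\ref{Claim:1}, and the linear rate from combining $d(x_{k+1})-d(x_k)\geq c_1|v(x_k)|^2$ with a Polyak--{\L}ojasiewicz-type bound $|v(x)|^2\geq \tfrac{2\alpha^2}{L_l-\alpha}(f^*_{j+1}-d(x))$ on the annulus; your constant check $c_1\cdot 2\alpha^2/(L_l-\alpha)=c_2T$ is also correct. The gap is precisely where you flag it: your plan to establish the PL inequality by testing the dual \eqref{eq:dual2} at (a surrogate for) $\lambda^*_{j+1}$ founders on dual feasibility, and the tools you invoke to repair it do not do the job. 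The multiplier of the reduced problem defining $f^*_{j+1}$ can have nonzero components on constraints that are inactive at the current iterate $x\in S_j\setminus S_{j+1}$, so it need not lie in $D_x$; Lemma~\ref{Lem:discrete} only controls the propagation of the iterate's \emph{own} multipliers from step $k$ to $k+1$ and says nothing about whether a foreign multiplier is admissible at $x_k$.

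The missing idea is to not reconcile active sets at all. Lemma~\ref{Lem:discrete2} (which is already proved in Appendix~\ref{App:d}) gives exactly the PL inequality you need, but with $f^*_{I_x}$ on the right-hand side: its proof tests the dual at $\lambda^*_{I_x}$, which lies in $D_x$ by construction since it corresponds to the reduced problem whose constraints are those in $I_x$. The passage from $f^*_{I_{x_k}}$ to $f^*_{j+1}$ is then purely combinatorial: the upper bound of Lemma~\ref{Lem:discrete2} together with $v(x_k)\neq 0$ and $\alpha<\mu$ gives $f^*_j\leq d(x_k)<f^*_{I_{x_k}}$, and since $f^*_{I_{x_k}}$ can only take the finitely many values $f^*_1<\dots<f^*_q$, it must satisfy $f^*_{I_{x_k}}\geq f^*_{j+1}$. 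Substituting this into the $f^*_{I_{x_k}}$-version of the PL inequality yields the $f^*_{j+1}$-version, and the rest of your argument goes through. One further omission: the claim asserts $0<c_2T<1$, which requires a separate verification (the paper checks $c_2T\leq 4\alpha\mu/(L_l+\mu)^2<1$ using $T\leq 2/(L_l+\mu)$ and $\alpha<\mu\leq L_l$); without it the geometric factor $(1-c_2T)^{k-k_0}$ is not known to be a contraction.
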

\begin{proof}
We conclude from \citet[Theorem~1.17, p.~16]{RockafellarWets} that $d$ is upper semi-continuous, which means that the level sets $\{ x\in \mathbb{R}^n ~|~d(x)\geq f_I^*\}$ are closed. \hchange{The fact that these are invariant follows directly from Claim~\ref{Claim:1}. For proving attractiveness and obtaining the linear rate, we start from \eqref{eq:claim1} and apply the lower bound on $d(x_k)$ provided by Lemma~\ref{Lem:discrete2} (see Appendix~\ref{App:d}).} This yields
\begin{equation}
d(x_{k+1})\geq d(x_k) + \frac{c_1}{L_l/(2\alpha^2) (1-\alpha/L_l)} (f^*_{I_{x_k}}-d(x_k)). \label{eq:claimtmp}
\end{equation}
\hchange{We consider the dynamics on one of the level sets $S_j$, that is, $x_k \in S_j \setminus S_{j+1}$, where $0\leq j<q$. From Lemma~\ref{Lem:discrete2} and the fact that $v(x_k) \neq 0$ we infer that }
\begin{equation*}
    f_j^* \leq d(x_k) < f_{I_{x_k}}^* \qquad \Rightarrow \qquad f_{I_{x_k}}^* \geq f_{j+1}^*,
\end{equation*}
\hchange{ as long as $x_k$ remains on $S_j \setminus S_{j+1}$, where $f^*_0$ is defined as $-\infty$. This follows from the fact that there are only finitely many $f_i^*$ and therefore $f_{I_{x_k}}^*$ can only take on a finite number of values.
As a result, we obtain from \eqref{eq:claimtmp} that}
\begin{equation*}
    d(x_{k+1})\geq d(x_k) + \frac{c_1}{L_l/(2\alpha^2) (1-\alpha/L_l)} (f_{j+1}^*-d(x_k)) = d(x_k) + c_2 T (f_{j+1}^* - d(x_k))
\end{equation*}
\hchange{as long as $x_k \in S_j \setminus S_{j+1}$, where we have used the definition of $c_2$ in the second step. Subtracting $f_{j+1}^*$ on both sides and rearranging terms results in}
\begin{equation*}
    (f_{j+1}^*-d(x_{k})) \leq (1-c_2 T)^{k-k_0} (f_{j+1}^*-d(x_{k_0})),
\end{equation*}
\hchange{where $k_0$ refers to the first time instant for which $x_k \in S_j$.} \hchange{We verify that $c_2 T<1$  by noting that $c_2 T$ is monotonically increasing for $0< T \leq 2/(L_l+\mu)$ and therefore}
\begin{align*}
    c_2 T = 2\alpha T (1-\mu T/2) \frac{\mu-\alpha}{L_l-\alpha} \leq \frac{4 \alpha}{L_l+\mu} \frac{L_l}{L_l+\mu} \frac{\mu-\alpha}{L_l-\alpha} \leq \frac{4 \alpha \mu}{(L_l+\mu)^2} < \frac{4 \mu^2}{(L_l+\mu)^2} \leq 1,
\end{align*}
\hchange{where we have repeatedly used the fact that $0<T\leq 2/(L_l+\mu)$, $0<\alpha < \mu$, and $0<\mu \leq L_l$.} \hchange{This shows attractivity of the set $S_{j+1}$. In addition, we conclude from \eqref{eq:claim1}}
\begin{equation*}
    c_1 |v(x_k)|^2 \leq d(x_{k+1})-d(x_k) \leq d(x_{k+1})-f_{j+1}^* + f_{j+1}^*-d(x_k) \leq f_{j+1}^*-d(x_k),
\end{equation*}
\hchange{(as long as $x_k, x_{k+1} \in S_j\setminus S_{j+1}$) which, in view of the exponential convergence of $f_{j+1}^*-d(x_k)$, implies the desired result.}

\end{proof}

The last claim provides a geometrical picture of the convergence of \eqref{eq:disAlgorithm}. At each iteration $k$ the iterate $x_k$ is contained in one of the level sets $S_j=\{x \in \mathbb{R}^n ~|~d(x) \geq f_j^*\}$ and converges to the next smaller level set, $S_{j+1}$. \hchange{Claim~\ref{Claim:3} already guarantees that the convergence happens at least at the linear rate $1-c_2 T$. The next claim ensures that except for the level set $S_q=\{x\in \mathbb{R}^n~|~d(x)\geq f^*\}$ (which contains only of the single point $x^*$), the convergence in fact happens in finite time.}
\begin{claim} \label{Claim:disLast}
\hchange{Provided that the assumptions of Proposition~\ref{Prop:GD} are satisfied, the iterates $x_k$ converge to the minimizer of \eqref{eq:fundProb}. Moreover, there exists an integer $N$, large enough, such that}
\begin{equation*}
    \min_{j\in \{0,1,\dots,k\} } |x^*-x_k|^2 \leq \frac{L_l/\alpha -1}{c_1 \alpha (\mu-\alpha)} ~\frac{f^*-d(x_0)}{k+1}, \quad \forall k\geq N.
\end{equation*}
 \label{Claim:conv}
\end{claim}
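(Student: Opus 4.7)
The plan is to combine Claims~\ref{Claim:1}--\ref{Claim:3} with a quadratic lower bound on $f^*-d(x)$ in terms of $|x-x^*|^2$ that is established in Lemma~\ref{Lem:discrete2} of Appendix~\ref{App:d}. Under Assumption~\ref{Ass:Conv}, that lemma supplies an inequality of the form $|x-x^*|^2\leq C\,(f^*-d(x))$ with the explicit constant $C=(L_l/\alpha-1)/(\alpha(\mu-\alpha))$, so that a small $d$-gap forces a small distance to the optimizer; in particular the innermost level set $S_q=\{x\in\mathbb{R}^n\mid d(x)\geq f^*\}$ collapses to $\{x^*\}$.

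To establish $x_k\to x^*$, I would use Claims~\ref{Claim:1} and \ref{Claim:3}: $d(x_k)$ is monotone nondecreasing and bounded above by $f^*$, and each of the finitely many level sets $S_0\supset S_1\supset\cdots\supset S_q$ is invariant and attracts the iterates at the linear rate $1-c_2T$ while $x_k$ remains in an annular region $S_j\setminus S_{j+1}$. A finite induction across $j$ produces an index $N$ such that $x_k\in S_{q-1}$ for every $k\geq N$. From that point on, either $x_k=x^*$ for some finite $k\geq N$ (since $S_q=\{x^*\}$), or $x_k\in S_{q-1}\setminus S_q$ for all $k\geq N$, and Claim~\ref{Claim:3} yields $f^*-d(x_k)\leq (1-c_2T)^{k-N}(f^*-d(x_N))\to 0$, which via the quadratic bound forces $|x_k-x^*|^2\to 0$.

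For the $\mathcal{O}(1/k)$ minimum-bound I would combine Claim~\ref{Claim:convV} with a pointwise inequality $|x^*-x_j|^2\leq C_{\mathrm{v}}|v(x_j)|^2$ valid on the terminal region $S_{q-1}$ (a consequence of the quadratic bound together with the descent inequality \eqref{eq:claim1}, since $c_1|v(x_j)|^2\leq d(x_{j+1})-d(x_j)\leq f^*-d(x_j)$ links the velocity to the $d$-gap on $S_{q-1}$). For $k\geq N$ the iterates lie in $S_{q-1}$, and
\[
\min_{j\in\{0,\ldots,k\}}|x^*-x_j|^2\ \leq\ C_{\mathrm{v}}\min_{j\in\{0,\ldots,k\}}|v(x_j)|^2\ \leq\ C_{\mathrm{v}}\,\frac{f^*-d(x_0)}{c_1(k+1)},
\]
with $C_{\mathrm{v}}=(L_l/\alpha-1)/(\alpha(\mu-\alpha))$, matching the stated inequality. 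The main obstacle is the quadratic lower bound of Lemma~\ref{Lem:discrete2}: identifying the precise constant $(L_l/\alpha-1)/(\alpha(\mu-\alpha))$ requires a careful interplay between strong convexity of the Lagrangian $l(\,\cdot\,,\lambda)$ and the dual representation \eqref{eq:dual2} of $d$. Once that bound is available, the remainder of the argument is essentially bookkeeping on top of the earlier claims.
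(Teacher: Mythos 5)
Your overall architecture matches the paper's: use the nested level sets $S_0\supset\dots\supset S_q$ from Claim~\ref{Claim:3} to localize the iterates, then convert the $d$-gap into a distance bound via the lemmas of Appendix~\ref{App:d} and finish with Claim~\ref{Claim:convV}. However, two steps as written do not go through.

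First, the ``finite induction across $j$'' producing $N$ with $x_k\in S_{q-1}$ for $k\geq N$ is not a consequence of Claim~\ref{Claim:3} alone. That claim gives $f_{j+1}^*-d(x_k)\leq(1-c_2T)^{k-k_0}(f_{j+1}^*-d(x_{k_0}))$ \emph{while} $x_k$ remains in $S_j\setminus S_{j+1}$, i.e.\ $d(x_k)$ approaches $f_{j+1}^*$ from below; this never forces $d(x_k)\geq f_{j+1}^*$ at any finite $k$, so the iterates could in principle linger in $S_j\setminus S_{j+1}$ forever. The paper closes this hole by contradiction: if $x_k$ stayed in $S_j\setminus S_{j+1}$, the exponential decay of $|v(x_k)|$ makes $x_k$ Cauchy with limit $\bar{x}\in C$, and upper semicontinuity of $d$ forces $v(\bar{x})=0$, hence $\bar{x}=x^*$ and $d(x_k)\to f^*$, contradicting $d(x_k)<f_{j+1}^*<f^*$ for $j<q-1$. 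Some argument of this type is needed.

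Second, the pointwise inequality $|x^*-x_j|^2\leq C_{\mathrm{v}}|v(x_j)|^2$ does not follow from the two facts you cite. Lemma~\ref{Lemma:propD} gives $|x_j-x^*|^2\leq\frac{2}{\mu-\alpha}(f^*-d(x_j))$ and \eqref{eq:claim1} gives $c_1|v(x_j)|^2\leq f^*-d(x_j)$: both quantities are bounded \emph{above} by the $d$-gap, which yields no comparison between them. What is needed is the reverse bound $f^*-d(x_j)\lesssim|v(x_j)|^2$, and this is exactly the \emph{lower} bound on $d$ in Lemma~\ref{Lem:discrete2}, namely $f_{I_{x_j}}^*-d(x_j)\leq\frac{L_l/\alpha-1}{2\alpha}|v(x_j)|^2$, combined with the fact that $f^*_{I_{x_j}}=f^*$ for $j\geq N$ (in the paper this comes from $x^*_{I_{x_k}}\to x^*$ together with $I_{x_k}$ taking finitely many values, so the active set stabilizes in finite time). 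Chaining that with Lemma~\ref{Lemma:propD} is what produces the constant $\frac{L_l/\alpha-1}{\alpha(\mu-\alpha)}$; your attribution of this constant to a single inequality $|x-x^*|^2\leq C(f^*-d(x))$ in Lemma~\ref{Lem:discrete2} conflates the two lemmas, and the constant actually multiplies $|v(x_j)|^2$, not the $d$-gap.
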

\begin{proof}
\hchange{As in the proof of Claim~\ref{Claim:3} we order the level sets corresponding to $f_I^*$ as follows $S_1\supset S_2 \supset \dots \supset S_q$, where $q\leq 2^{n_\text{g}}$ and where $S_1$ corresponds to $d(x)\geq f_{\{ \} }^*$ and $S_q$ to $d(x)\geq f^*$. We start by proving that for any $j<q-1$, $x_k$ traverses $S_j\setminus S_{j+1}$ in finite time.}

\hchange{For the sake of contradiction, we assume that $x_i \in S_j$, but $x_i\not \in S_{j+1}$ for all $i>k$. This implies $d(x_i) < f_{j+1}^*$ and $d(x_i) \geq f^*_{j}$ for all $i \geq k$, where $f_{j}^*$ and $f_{j+1}^*$ are defined in Claim~\ref{Claim:3}. According to Claim~\ref{Claim:3}, $|v(x_i)|$ converges to zero at an exponential rate for all $i>k$, since $x_i$ stays in $S_{j}\setminus S_{j+1}$ for all $i>k$. Hence, $x_i$ is a Cauchy sequence and has therefore a limit in $\mathbb{R}^n$, which we call $\bar{x}$. Since $v(x_i) \in V_\alpha(x_i)$ for all $i\geq 0$ and $v(x_i)\rightarrow 0$, we conclude by continuity of $g$ and $h$ that $\bar{x}\in C$. The same reasoning as in the proof of Claim~\ref{Claim:CTconv} (continuous-time case) implies by upper semi-continuity of $d$ that $f(\bar{x})=\lim_{i\rightarrow \infty} d(x_i) \leq d(\bar{x}) = f(\bar{x})-|v(\bar{x})|^2/(2\alpha)$. This means that $v(\bar{x})=0$, $\bar{x}=x^*$, and therefore $d(x_i)\rightarrow d(x^*)=f^*$, which leads to the desired contradiction.}

\hchange{Thus, there exists a finite time instant $N$ where $x_k$ enters $S_{q-1}$, that is $x_k \in S_{q-1}$ for all $k\geq N$. According to Claim~\ref{Claim:3}, the velocity converges at an exponential rate for $k>N$. We infer that $x_k$ is a Cauchy sequence, repeat the same arguments as above, and conclude that $x_k$ converges to $x^*$.}

\hchange{We further note that Lemma~\ref{Lem:discrete2} and Lemma~\ref{Lemma:propD} (see Appendix~\ref{App:d}) imply the following bound}
\begin{equation*}
    |x_k-x_{I_{x_k}}^*|^2 \leq \frac{2}{\mu-\alpha} (f^*_{I_{x_k}}-d(x_k)) \leq \frac{2}{\mu - \alpha} \frac{L_l/\alpha -1}{2 \alpha} |v_k|^2,
\end{equation*}
\hchange{which implies that $x_{I_{x_k}}^*$ converges to $x^*$. However, $I_{x_k}$ can only take on a finite number of values and therefore $I_{x_k} \rightarrow I_{x^*}$ in finite time. This means that}
\begin{equation*}
    |x_k-x^*|^2 \leq  \frac{L_l/\alpha -1}{\alpha (\mu - \alpha) } ~|v_k|^2, \quad \forall k\geq N,
\end{equation*}
\hchange{where $N$ is a sufficiently large integer. Applying the result from Claim~\ref{Claim:convV} concludes the proof.}
\end{proof}

\hchange{We note that Lemma~\ref{Lem:discrete2} and Lemma~\ref{Lemma:propD} (see Appendix~\ref{App:d}) relate the velocity $|v_k|$ of the iterates to the distance $|x_k-x^*|$. As a result, a similar argument as used for Claim~\ref{Claim:conv} ensures that the convergence of $|x_k-x^*|$ occurs asymptotically at a linear rate.} \hchangeII{The details are included in Appendix~\ref{App:AsymptoticRate}. Furthermore, we believe that with a more careful analysis the dependence of the integer $N$ in Claim~\ref{Claim:disLast} on problem specific parameters, such as $f^*_j$, $j=1,2,\dots,q$, can be explicitly quantified.}

With this geometrical picture in mind, we will discuss two extensions of \eqref{eq:disAlgorithm}. The resulting trajectories can be shown to converge to the minimizer of \eqref{eq:fundProb} with the same arguments as used for Claim~\ref{Claim:1}~-~Claim~\ref{Claim:conv}.

\subsection{Extensions}\label{Sec:Extension}
The convergence proof hinges on the following two properties of \eqref{eq:disAlgorithm}: (i) the multiplier $\lambda_k$ is feasible for the dual \eqref{eq:dual} at time $k+1$, and (ii) the function $l(x_{k},\lambda_{k})-|\nabla_x l(x_k,\lambda_k)|^2/2\alpha$ increases sufficiently from $x_k$ to $x_{k+1}$ (for a fixed $\lambda_k$). We can therefore extend \eqref{eq:disAlgorithm} by including the following line-search mechanism:
\begin{equation*}
T:=\argmax_{\tau>0, \alpha \tau \leq 1} l(x_k + \tau v_k,\lambda_k) - \frac{1}{2\alpha} |\nabla_x l(x_k + \tau v_k,\lambda_k)|^2,\qquad x_{k+1} = x_k + T v_k,
\end{equation*}
where the velocity $v_k$ is determined by solving \eqref{eq:vdef}, as before. As an alternative, we can alternate between updating $\lambda_k$ via \eqref{eq:dual} and applying gradient steps (with $\lambda_k$ fixed):
\begin{equation*}
x_{j+1} = x_j - T \nabla_x l(x_j,\lambda_k),\quad j=k,k+1,\dots,
\end{equation*}
as long as $g_i(x_{j+1})\leq 0$ for all $i\in I_{x_k}$ with corresponding multipliers $\lambda_{ik} > 0$ (constraints that were active and had a nonzero multiplier $\lambda_k$ at time $k$ are not allowed to open up). As is immediate from the arguments of Claim~\ref{Claim:1}, each of these gradient steps increases $l(x,\lambda_k)-|\nabla_x l(x,\lambda_k)|^2/(2\alpha)$ by $c_1 |\nabla l_x (x_j,\lambda_k)|^2$. Evaluating $\nabla_x l$ for a fixed $\lambda_k$ is computationally cheap and requires only the evaluation of $\nabla f$ and $\nabla \bar{g}(x)$. 
\section{\hchange{Motivation and Background}}\label{Sec:Motivation}
The continuous-time formulation given in Proposition~\ref{Prop:GF} can be motivated by drawing analogies to non-smooth mechanics. \hchange{This not only provides additional intuition for the algorithms that are discussed herein, but also allows for generalizations to accelerated first-order methods or Newton-type methods when constraints are incorporated on a velocity level.} We will start by viewing the stationarity conditions of \eqref{eq:fundProb} as the static equilibrium of a mechanical system. We will then apply d'Alembert's principle~\citep[see, e.g.,][]{Lanczos}, which relates this variational characterization of equilibria to the variational characterization of motion. In the context of optimization, this leads to the algorithm \eqref{eq:posLeveltmp}. We further note that the equivalence between \eqref{eq:posLeveltmp} and \eqref{eq:velLeveltmp} can be related to the equivalence between the principle of virtual work and the principle of virtual power in the context of mechanics.

We consider a mechanical system that consists of a point mass located at $x\in \mathbb{R}^n$ on which the external force $F:=-\nabla f(x)$ acts. The point mass is constrained to the set $C$.\footnote{From a physical perspective the constraint can be thought of as a second rigid body with infinite mass that consists of all points $\mathbb{R}^n\setminus C$. We seek to model the interaction between the point mass and the constraint.} For a given $\bar{x}\in C$ we start by investigating whether the point mass is in static equilibrium; i.e., it does not move under the influence of the external force and the constraint $x\in C$. In order to do so, we isolate the point mass and replace the interaction with the constraint by a (constraint) force, $-R \in N_C(\bar{x})$. The corresponding graphical procedure, often referred to as free-body diagram, is illustrated in Figure~\ref{Fig:fbd}. The principle of virtual work, which is the fundamental postulate of classical mechanics, can now be stated.
\begin{postulate}\label{pos:virtWork}
The point mass is in static equilibrium if and only if the virtual work vanishes for any virtual displacement $\delta x\in \mathbb{R}^n$. The \emph{virtual work} is defined as $(F+R)\T \delta x$, where $F$ is the external force and $R\in -N_C(x)$ the constraint force.
\end{postulate}
Due to the fact that arbitrary virtual displacements are allowed, Postulate~\ref{pos:virtWork} concludes that the point mass is in static equilibrium at $\bar{x}\in C$ if the following conditions are fulfilled:
\begin{equation}
-\nabla f(\bar{x})+R=0, \quad -R\in N_C(\bar{x}). \label{eq:KKTtmp}
\end{equation}
By virtue of the constraint qualification, these are equivalent to the Karush-Kuhn-Tucker conditions of \eqref{eq:fundProb}. Thus, with our choice $F:=-\nabla f(x)$, we can relate the stationarity conditions of \eqref{eq:fundProb} to the static equilibrium of a mechanical system, as characterized by the principle of virtual work. 

\begin{figure}
\def\svgwidth{\linewidth}
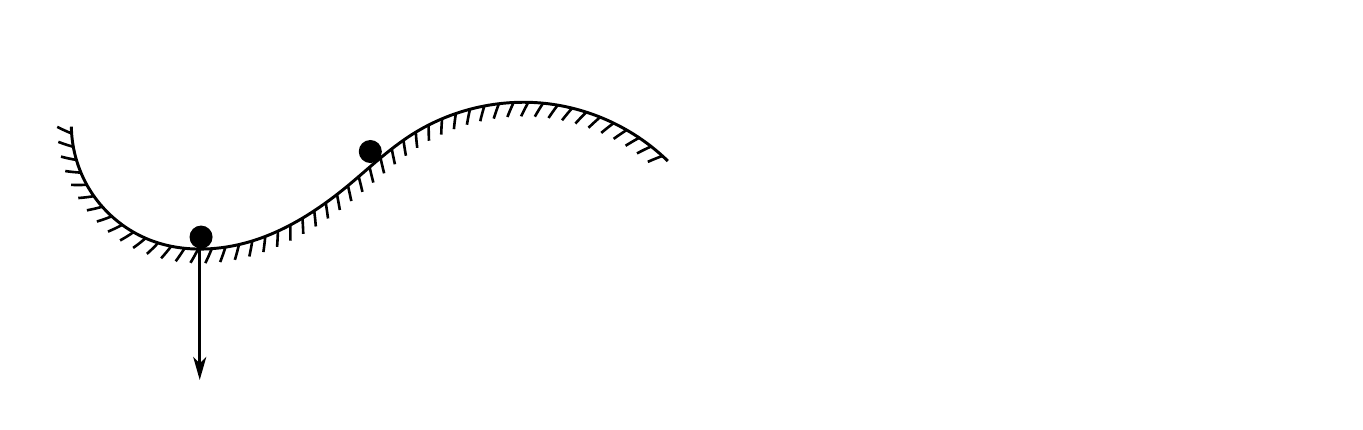
\caption{The figure illustrates the concept of a free-body diagram, where the geometric boundary condition $g(x)\geq 0$, as shown on the left, is replaced by the constraint forces, $-R\in N_C(x)$, as shown on the right. We note that $x_1$ is in static equilibrium, since $-\nabla f(x_1)$ and $R_1$ cancel, whereas $x_2$ is not.}
\label{Fig:fbd}
\end{figure}

The connections to optimization are even more explicit when restricting ourselves to admissible virtual displacements; i.e., $\delta x\in T_C(\bar{x})$. By definition, constraint forces satisfy $-R\T \delta x \leq 0$ for all $\delta x \in T_C(\bar{x})$ or, in the language of classical mechanics, constraint forces are such that their contribution to the virtual work is nonnegative.\footnote{In most classical textbooks only equality constraints are considered. In that case, constraint forces perform no virtual work.} This leads to the principle of d'Alembert-Lagrange, which represents the cornerstone of Lagrangian mechanics.
\begin{corollary}\label{Cor:DAlembertLagrange}
If the point mass located at $\bar{x}\in C$ is in static equilibrium, the virtual work of the external forces satisfies $F\T \delta x \leq 0$ for all admissible variations $\delta x\in T_C(\bar{x})$.
\end{corollary}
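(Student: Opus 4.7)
The plan is to exploit the static equilibrium characterization \eqref{eq:KKTtmp} that the preceding discussion already extracted from Postulate~\ref{pos:virtWork}, namely that at any equilibrium $\bar{x}\in C$ there exists a constraint force $R$ with $F+R=0$ and $-R\in N_C(\bar{x})$. This reduces the corollary to a direct consequence of the duality between the tangent cone and the normal cone.

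First I would fix an arbitrary admissible variation $\delta x\in T_C(\bar{x})$ and invoke the defining property of the normal cone, as recalled in the Notation paragraph: $-R\in N_C(\bar{x})$ means exactly that $-R\T\delta x\leq 0$ for every $\delta x\in T_C(\bar{x})$. Equivalently, $R\T\delta x\geq 0$, so the constraint force does nonnegative virtual work against any admissible variation (which is the mechanical content of ``constraint forces perform nonnegative virtual work'' mentioned in the preceding footnote).

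Next I would substitute the equilibrium relation $F=-R$ obtained from the first equation of \eqref{eq:KKTtmp}. This gives $F\T\delta x = -R\T\delta x \leq 0$, which is precisely the claim of the corollary. Since $\delta x\in T_C(\bar{x})$ was arbitrary, the inequality holds for all admissible variations.

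There is essentially no hard step here: the result is a short algebraic consequence of the variational equilibrium and the polarity between $T_C(\bar{x})$ and $N_C(\bar{x})$. The only subtlety worth flagging is the role of the constraint qualification (Assumption~\ref{Ass:MF}): it is what justifies identifying the mechanical equilibrium \eqref{eq:KKTtmp} with the Karush--Kuhn--Tucker conditions of \eqref{eq:fundProb}, ensuring that ``static equilibrium'' as used in the corollary is the same notion as ``stationary point'' in the optimization. With that identification in hand, the corollary is immediate.
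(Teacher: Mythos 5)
Your argument is correct and coincides with the paper's own (implicit) justification: the paper derives the corollary by combining the equilibrium characterization \eqref{eq:KKTtmp} with the defining polarity $-R\T\delta x\leq 0$ for $\delta x\in T_C(\bar{x})$, exactly as you do. Nothing is missing.
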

Through the lens of optimization, this means that $-\delta f=-\nabla f(\bar{x})\T \delta x \leq 0$ for all admissible variations $\delta x\in T_C(\bar{x})$, or equivalently, $f(\bar{x})\leq f(x)$ for all $x$ in an open neighborhood of $\bar{x}$ with $x\in C$. The relations are summarized in Figure~\ref{Fig:relation} (left).

\begin{figure}
\begin{minipage}[l]{.5\columnwidth} \scalebox{0.75}{\begin{tikzpicture}
  \matrix (m) [matrix of nodes,row sep=3em,column sep=3em,minimum width=2em,
 column 1/.style={nodes={text width=8em,align=center}}
  ]
  {
  	 optimization & mechanics (static eq.)\\[-3em]
     stationarity & p. of d'Alembert-Lagrange \\
     {$-\nabla f(\bar{x}) + R =0$\\ $-R\in N_C(\bar{x})$} & p. of virtual work \\
     KKT \\ };
   \path[>=stealth]
    (m-2-1.east|-m-2-2) edge [<->]  (m-2-2)
    (m-2-1) edge [<->]  (m-3-1)
    (m-3-1.east|-m-3-2) edge [<->] (m-3-2)
    (m-3-2) edge [<->] (m-2-2)
	(m-4-1) edge [<->] (m-3-1);    
\end{tikzpicture}}
\end{minipage}\hfill
\begin{minipage}[r]{.5\columnwidth} \scalebox{0.75}{\begin{tikzpicture}
  \matrix (m) [matrix of nodes,row sep=3em,column sep=3em,minimum width=2em,
  column 1/.style={nodes={text width=10em,align=center}}
  ]
  {
  	 optimization & mechanics (static eq.)\\[-3em]
     stationarity & p. of d'Alembert-Lagrange \\
     {$-\nabla f(\bar{x}) + R=0$ \\ $\displaystyle{-R \in \limsup_{x\rightarrow_C \bar{x}}N_C(x)}$ } & p. of virtual work \\
     KKT \\ };
   \path[>=stealth]
    (m-2-1) edge [->] (m-3-1)
    (m-3-1.east|-m-3-2) edge [<->] (m-3-2)
    (m-4-1) edge [->] (m-3-1);
   \draw (m-2-2.north west) -- (m-2-2.south east);
   \draw (m-2-2.south west) -- (m-2-2.north east);
\end{tikzpicture}}
\end{minipage}
\caption{The figure summarizes the analogies between constrained optimization and non-smooth mechanics. On the left, constraint qualifications are assumed to hold ensuring that the set $C$ is regular in the sense of Clarke. On the right, the set $C$ fails to be regular, for example due to a reintrant (inward facing) corner. In that case, the notion of equilibrium needs to be extended by an appropriate closure of $N_C(x)$; see, for example, \citet[Ch.~6]{RockafellarWets}. The resulting equilibrium condition is no longer sufficient for stationarity and its equivalence to the Karush-Kuhn-Tucker conditions breaks down~\citep[Thm.~6.14]{RockafellarWets}. Moreover, the principle of d'Alembert-Lagrange is no longer a consequence of the principle of virtual work and therefore fails to characterize static equilibria when $C$ is not regular \citep{Panagiotopoulous}. There are important examples of mechanical systems where $C$ fails to be regular; see, for example, \citet[Ch.~11]{Glocker}. }
\label{Fig:relation}
\end{figure}
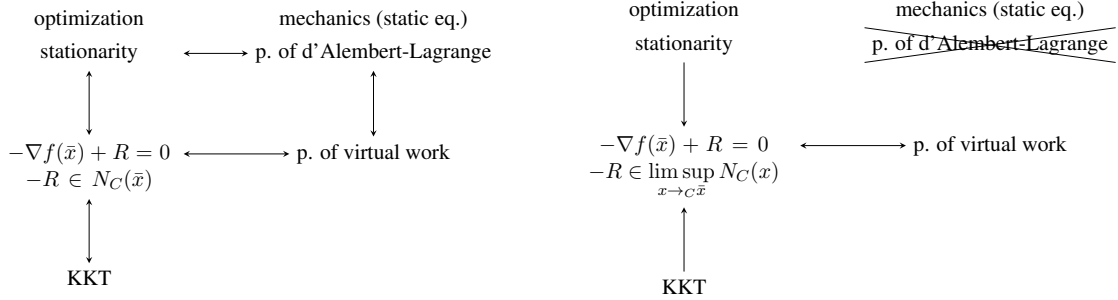

The important insight from classical mechanics (essentially due to d'Alembert) is that the principle of virtual work, Postulate~\ref{pos:virtWork}, and the principle of d'Alembert-Lagrange, Corollary~\ref{Cor:DAlembertLagrange}, naturally extend from the static equilibrium to the dynamic equilibrium that characterizes the motion of a mechanical system. It suffices to add the ``forces of inertia,'' which for the point mass amounts to adding $-m\ddot{x}$ to the external forces $F$ \citep[Ch.~4]{Lanczos}. We will apply these ideas to gradient flow, where the ``forces of inertia" are given by $-\dot{x}$. This yields \eqref{eq:posLeveltmp}, which we restate as follows:
\begin{equation*}
\dot{x}(t)=-\nabla f(x(t)) + R(t), \quad -R(t)\in N_C(x(t)), \quad \forall t\in [0,\infty) ~~\text{a.e.} 
\end{equation*}


The condition \eqref{eq:posLeveltmp} can still be viewed as a force balance between $\dot{x}(t)+\nabla f(x(t))$ and $R(t)$, whereby the reaction force $R(t)$ ensures that $x(t)$ remains feasible. Moreover, when the system is at rest, $\dot{x}$ vanishes and \eqref{eq:posLeveltmp} reduces to the Karush-Kuhn-Tucker conditions of \eqref{eq:fundProb} (see \eqref{eq:KKTtmp}). If $x(t)$ happens to be in the interior of $C$, the reaction force $R(t)$ vanishes, and $x(t)$ evolves according to unconstrained gradient flow. The almost everywhere quantifier is clearly needed---if $x(t)$ approaches the boundary of the set $C$, an instantaneous velocity jump might be required for ensuring that $x(t)$ remains in $C$ (at the time instant of the velocity jump, $\dot{x}$ is no longer defined).

\hchange{Thus, the analogy to non-smooth mechanics not only motivates \eqref{eq:posLeveltmp}, which is used as a starting point for all the derivations in this article, but also enables different choices for the ``forces of inertia." As a result, by applying the principle of virtual work, as stated in Postulate~\ref{pos:virtWork}, one could derive momentum-based or Newton-type algorithms that include constraints on velocity level. A thorough exploration of these extensions is an important topic for future work.}


\section{Computational Aspects}\label{Sec:FixedPoint}
This section highlights two important aspects of the implementation of the discrete-time algorithm \eqref{eq:disAlgorithm}: (i) the computation of the constraint force $R_k=\nabla \bar{g}(x_k)\lambda_k$, and (ii) control of round-off errors and inaccuracies.

\subsection{Computing the constraint force $R_k$}
The constraint forces are determined by the dual problem \eqref{eq:dual}, which can be solved with various algorithms. The simple nature of the set $D_{x_k}$ makes (accelerated) projected gradient descent schemes appealing. In the following, we present a procedure that is inspired by the method of successive over-relaxation which solves \eqref{eq:dual} very efficiently. The procedure is useful for solving large linear complementary problems and is commonly used in the non-smooth mechanics community \citep[see, e.g.,][]{Studer}. For completeness, we give a rough overview of the main points and refer the reader to the work of \citet{Cottle} for further details. The stationarity conditions of \eqref{eq:dual} are given by
\begin{equation}\label{eq:statdual}
W_k\T W_k \lambda_k - W_k\T \nabla f(x_k) + \alpha \bar{g}(x_k) + \partial \psi_{D_{x_k}}(\lambda_k) \ni 0,
\end{equation}
where we use the notation introduced in Algorithm~\ref{Alg:one}.\footnote{Compared to the notation in \eqref{eq:dual}, for example, we exclude all multipliers $\lambda_i$ that correspond to inactive inequality constraints; that is, $i\not\in I_{x_k}$.} The underlying idea relies on a suitable splitting of the matrix $W_k\T W_k$ that enables fixed-point iteration. We introduce $\lambda^j$ as the approximation of $\lambda_k$ at iteration $j$, $j=0,1,\dots$ and further suppress the subscript $k$ for ease of notation. We denote the strictly upper triangular part of $W\T W$ by $U$ and the diagonal by $D$. The matrix $W\T W$ is therefore given by $U\T+D+U$, where the diagonal elements are guaranteed to be strictly positive.\footnote{\hchange{The diagonal elements of $D$ are given by $|\nabla h_i(x)|^2$, $i=1,\dots n_\text{h}$ and $|\nabla g_i(x)|^2$, $i\in I_x$. Due to the constraint qualification, these are are guaranteed to be strictly positive. }} We can split the matrix $W\T W$ into $U\T+\omega^{-1} D$ and $U + (1-\omega^{-1})D$, where $\omega \in (0,2)$ is fixed, leading to
\begin{equation}
    (U\T + \omega^{-1} D) \lambda^{j+1} + \partial \psi_{D_{x}}(\lambda^{j+1}) + (U + (1-\omega^{-1}) D) \lambda^{j} - W\T \nabla f(x) + \alpha \bar{g}(x)\ni 0, \label{eq:statdual2}
\end{equation}
where we have omitted the subscript $k$; hence, $x=x_k$, $W=W_k$, etc. The role of the variable $\omega$ as a tuning parameter will become apparent below. We note that \eqref{eq:statdual2} reduces to \eqref{eq:statdual} for $\lambda^{j+1}=\lambda^j$. As a result of the fact that $U\T$ is strictly lower triangular, \eqref{eq:statdual2} reduces to the following inclusion for a single component: $\lambda_i^{j+1}$
\begin{equation}
\omega^{-1} D_{ii} \lambda_i^{j+1} + \partial \psi_{\mathbb{R}}(\lambda_i^{j+1}) + *_i \ni 0, \quad \text{or} \quad 
\omega^{-1} D_{ii} \lambda_i^{j+1} + \partial \psi_{\mathbb{R}\geq 0}(\lambda_i^{j+1}) + *_i \ni 0, \label{eq:statSingle}
\end{equation}
depending on whether $i\leq n_\text{h}$ or $i>n_\text{h}$, where $*_i$ is a placeholder for all remaining terms that are constant or only depend on $\lambda^j$ and $\lambda^{j+1}_1, \dots \lambda^{j+1}_{i-1}$. The inclusion in \eqref{eq:statSingle} can be seen as a stationarity condition for $\lambda_i^{j+1}$, which uniquely determines $\lambda_i^{j+1}$ from $\lambda^{j}$ and $\lambda^{j+1}_1,\dots,\lambda^{j+1}_{i-1}$. We can therefore express \eqref{eq:statdual2} as
\begin{equation}
\lambda^{j+1} = \text{prox}_{D_{x}}\left(\lambda^j -\omega D^{-1} (U\T \lambda^{j+1} + (D+U) \lambda^j - W\T \nabla f(x) + \alpha \bar{g}(x))\right), \label{eq:statdual3}
\end{equation}
where $\text{prox}_{D_{x}}: \mathbb{R}^{n_\text{h}}\times \mathbb{R}^{|I_{x}|} \rightarrow \mathbb{R}^{n_\text{h}} \times \mathbb{R}^{|I_{x}|}_{\geq 0}$ is defined as
\begin{align*}
    (\text{prox}_{D_{x}} (\xi))_i &= \xi_i, \qquad\qquad~~~ i=1,\dots, n_\text{h}, \\ (\text{prox}_{D_{x}} (\xi))_i &= \max\{ \xi_i, 0\}, \quad i=n_\text{h}+1, \dots n_\text{h} + |I_{x}|,
\end{align*}
and where we have used the fact that $\omega^{-1} D_{ii}>0$. It is important to note that \eqref{eq:statdual3} provides an explicit rule for computing $\lambda^{j+1}$ from $\lambda^{j}$, since $U\T$ is strictly lower triangular. In particular, by substituting the newly computed elements $\lambda^{j+1}$ directly in the right-hand side of \eqref{eq:statdual3}, i.e., overwriting $\lambda_i^j$ with $\lambda_i^{j+1}$ as soon as it becomes available, the expression on the right-hand side of \eqref{eq:statdual3} reduces to
\begin{equation*}
\text{prox}_{D_{x}} \left(\lambda^j - \omega D^{-1} (W\T W \lambda^j - W\T \nabla f(x) + \alpha \bar{g}(x)) \right),
\end{equation*} 
which becomes very convenient for algorithmic implementation. The expression \eqref{eq:statdual3} can therefore be viewed as an extension of the method of successive over-relaxation that accounts for the complementary slackness induced by the inequality constraints. The method reduces to a variant of the Gauss-Seidel method for $\omega=1$. The following proposition due to \citet[p.~400]{Cottle} ensures convergence of the $\lambda^{j} \rightarrow \lambda_k$ as long as $\omega\in (0,2)$. The proof follows \citet[p.~400]{Cottle} and is included in Appendix~\ref{App:ProofLambda} for completeness.

\begin{proposition}\label{Prop:ConvergenceLambda}
\citet[p.~400]{Cottle} The sequence $\lambda^j$, defined according to \eqref{eq:statdual3}, converges for $\omega\in (0,2)$. The resulting multiplier $\lim_{j\rightarrow\infty} \lambda^j=\lambda_k$ satisfies \eqref{eq:statdual} and therefore maximizes \eqref{eq:dual}.
\end{proposition}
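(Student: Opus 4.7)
The plan is to view the iteration \eqref{eq:statdual3} as a coordinatewise over-relaxed descent on the convex quadratic energy
\[
E(\lambda) := \tfrac{1}{2}\lambda\T W\T W \lambda - \lambda\T\bigl(W\T \nabla f(x) - \alpha \bar g(x)\bigr),
\]
whose constrained minimum on $D_x$ coincides with the negative of \eqref{eq:dual} (up to a constant). Writing $\lambda^{j,i}$ for the intermediate vector after the first $i$ coordinates of sweep $j$ have been overwritten, the fact that $U\T$ is strictly lower triangular ensures that each single-coordinate update is the unique solution of a one-dimensional quadratic subproblem with strictly positive curvature $D_{ii}$, restricted to $\mathbb{R}$ (for $i\leq n_\text{h}$) or to $\mathbb{R}_{\geq 0}$ (for $i>n_\text{h}$), and with an $\omega$-relaxation applied to the unconstrained minimizer.

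The core step is the classical SOR energy inequality: for each coordinate update I would establish
\[
E(\lambda^{j,i-1}) - E(\lambda^{j,i}) \;\geq\; \tfrac{D_{ii}}{2}\Bigl(\tfrac{2}{\omega}-1\Bigr)\bigl(\lambda^{j,i}_i - \lambda^{j,i-1}_i\bigr)^2.
\]
The derivation is a short Taylor expansion of $E$ along the $i$-th coordinate together with the observation that the proximal projection onto $\mathbb{R}$ or $\mathbb{R}_{\geq 0}$ only improves the auxiliary quadratic (by convexity and nonexpansiveness of the projection). For $\omega\in(0,2)$ the prefactor is strictly positive. Telescoping over one sweep and then over $j$ yields $E(\lambda^0)-E(\lambda^{j+1})\geq c_\omega \sum_{\ell\leq j} |\lambda^{\ell+1}-\lambda^{\ell}|^2$. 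Since \eqref{eq:dual} attains its maximum by Lemma~\ref{Lemma:SD} (strong duality and coercivity of $E$ on $D_x$, which uses linear independence of the columns of $\nabla h(x)$ and the Mangasarian--Fromovitz direction $w$ to rule out recession directions on the nonnegative cone of the active inequality multipliers), $E$ is bounded below, so $\lambda^{j+1}-\lambda^j\to 0$ and the iterates remain in a bounded sublevel set of $E$.

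Boundedness then gives an accumulation point $\bar\lambda$. Because $\lambda^{j+1}-\lambda^j\to 0$ and the SOR map \eqref{eq:statdual3} is continuous in its argument (the proximal map $\text{prox}_{D_x}$ is nonexpansive), $\bar\lambda$ is a fixed point of that map, which, unrolling, is precisely \eqref{eq:statdual}; by strong duality (Lemma~\ref{Lemma:SD}) $\bar\lambda$ is a maximizer of \eqref{eq:dual}. To upgrade subsequential to full convergence, I would combine $|\lambda^{j+1}-\lambda^j|\to 0$ with monotone decrease of $E$ and the fact that the set $\Lambda^*$ of dual maximizers is a closed convex polyhedron: the Fejér-type property that each sweep decreases the distance to $\Lambda^*$ (a standard consequence of the single-coordinate descent inequality applied with reference point in $\Lambda^*$) rules out cluster points outside $\Lambda^*$, and combined with $\lambda^{j+1}-\lambda^j\to 0$ forces a unique limit.

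The main obstacle is the one-step SOR descent inequality: although it is classical in the unconstrained quadratic setting, one must verify carefully that truncation by $\text{prox}_{D_x}$ preserves the descent constant $(2/\omega-1)D_{ii}/2$, which I would do by splitting into the case where the over-relaxed iterate is feasible (no truncation, standard identity) and the case where truncation is active (showing that the clamped update still dominates the unrelaxed Gauss--Seidel point in the one-dimensional convex energy). A secondary subtlety is that $W\T W$ is only positive semi-definite when $W$ has linearly dependent columns, so uniqueness of the dual maximizer need not hold; hence the argument for full (not merely subsequential) convergence must go through the Fejér/polyhedral step above rather than through strict convexity of $E$.
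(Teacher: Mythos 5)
Your proposal is correct in substance and rests on the same engine as the paper's proof: monotone decrease of the dual quadratic energy (your $E$ is, up to the constant $\tfrac12|\nabla f(x)|^2$, exactly the paper's $\tilde d(\lambda)=\tfrac12\lambda\T G\lambda+\lambda\T q$ with $G=W\T W$, $q=-W\T\nabla f(x)+\alpha\bar g(x)$), with the decrease per step controlled by the positive definite matrix $(2\omega^{-1}-1)D$ and boundedness below supplied by strong duality. The organization differs: the paper works at the level of a full sweep, using the splitting $B=U\T+\omega^{-1}D$, $C=U+(1-\omega^{-1})D$ and the conjugate subgradient theorem to recast \eqref{eq:statdual2} as the complementarity system $\lambda^{j+1}\in D_x$, $-B\lambda^{j+1}-C\lambda^j-q\in D_x^*$, ${\lambda^{j+1}}\T(-B\lambda^{j+1}-C\lambda^j-q)=0$, from which the single inequality $\tilde d(\lambda^j)-\tilde d(\lambda^{j+1})\geq\tfrac12(\lambda^j-\lambda^{j+1})\T(B-C)(\lambda^j-\lambda^{j+1})$ follows in one line; you instead telescope per-coordinate descent inequalities, which buys a more elementary derivation at the cost of the case analysis you flag (feasible vs.\ truncated over-relaxed point). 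One remark on your final step: your concern about upgrading subsequential to full convergence when $W\T W$ is only positive semi-definite is well placed --- the paper's own proof is terse precisely there, asserting that vanishing successive differences plus positive definiteness of $\mathrm{symm}(B-C)$ ``implies that $\lambda^j$ converges'' and leaning on the cited result of Cottle, Pang and Stone. Your Fej\'er/polyhedral route is a reasonable way to close that gap, but the claimed distance-decrease to the solution set is itself not automatic for $\omega>1$ (the over-relaxed map need not be nonexpansive), so that step would require the same level of care you already apply to the coordinate descent inequality.
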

In our numerical experiments, the choice $\omega=1$ (the Gauss-Seidel variant) yielded good results.

\subsection{Dealing with round-off errors and inaccuracies in the computation of $R_k$}
In Section~\ref{Sec:Example} and Section~\ref{Sec:Discrete} we noted that the minimizer of \eqref{eq:fundProb} is typically not a stable equilibrium in the sense of Lyapunov for \eqref{eq:disAlgorithm}. If we revisit the example of Section~\ref{Sec:Example} we realize that a trajectory initialized at $x_0=\epsilon>0$, where $\epsilon>0$ is arbitrarily small, will make a relatively large step to $x_1<0$ before approaching the origin from $x_k<0$. Thus, if we set the constraint force $R_k$ to be slightly too large by mistake, when approaching the origin from $x_k<0$, this might push $x_k$ again to positive values ($x_k>0$), at which point the cycle would start again. For a practical implementation of \eqref{eq:disAlgorithm}, it is therefore important to address and discuss the effect of round-off errors and inexact computations of $R_k$. 

We can address the problem with a combination of the following two strategies:
\paragraph{(i) Slightly extending the infeasible set:} We extend the set $I_{x}$ to $\{i\in \mathbb{Z}~|~g_i(x)\leq \epsilon_\text{g}\}$, where $\epsilon_\text{g}>0$ is a user-specified tolerance for constraint satisfaction. Provided that $x^*$, the minimizer of \eqref{eq:fundProb}, lies on the boundary of the feasible set, this has the effect that in a neighborhood about $x^*$ inequality constraints are treated as equality constraints, which prevents $x_k$ from cycling even in the presence of round-off errors and inexact computations of $R_k$. We illustrate the situation with the example of Section~\ref{Sec:Example}, where Figure~\ref{fig:simpExample2} shows the gradient $\nabla_x l$. The introduction of the parameter $\epsilon_\text{g}$ slightly extends the infeasible region, and moves the discontinuity of $\nabla_x l$ from $x^*$ to $x^*+\epsilon_\text{g}$. This renders the origin stable in the sense of Lyapunov and therefore mitigates the effect of small round-off errors and slight inaccuracies in the computation of $R_k$.

\paragraph{(ii) Adapting the stopping criteria of \eqref{eq:statdual3}:} In continuous time, the complementary slackness states that $\lambda_i>0$ implies $\diff  g_i(x(t))/\dt +\alpha g_i(x(t))=0$ (constraint $i$ remains active), whereas $\diff g_i(x(t))/\dt +\alpha g_i(x(t)) \geq 0$ for $\lambda_i=0$ (constraint $i$ might open up). Since we are solving the complementary slackness conditions only approximately, it might happen that even for $\lambda_i>0$, $\diff g(x(t))_i /\dt$ becomes too large such that the constraint incorrectly opens up in the next iteration of our discrete approximation. This can be avoided by stopping the iteration \eqref{eq:statdual3} only if for each inequality constraint $i$ with $\lambda_i>0$, we have
\begin{equation}
   \underbrace{(W_k\T W_k \lambda -W_k\T \nabla f(x_k))_i}_{\approx \diff g_i(x(t)) / \dt} + \alpha g_i(x_k) \leq \epsilon_\text{g} \alpha/2.  \label{eq:stopping2}
\end{equation}
For convex constraints ($g$ is concave) this  inequality ensures that
\begin{equation*}
    g_i(x_{k+1}) \leq (1 - \alpha T) g_i(x_k) + \epsilon_\text{g} \alpha T/2,
\end{equation*}
for all constraints where the corresponding multiplier $\lambda_i$ is strictly positive. The fact that $g_i(x_k)\leq \epsilon_\text{g}$ (see point (i) above) and $0 < \alpha T \leq 1$ guarantees that $g_i(x_{k+1})< \epsilon_\text{g}$, which means that the constraint remains active.

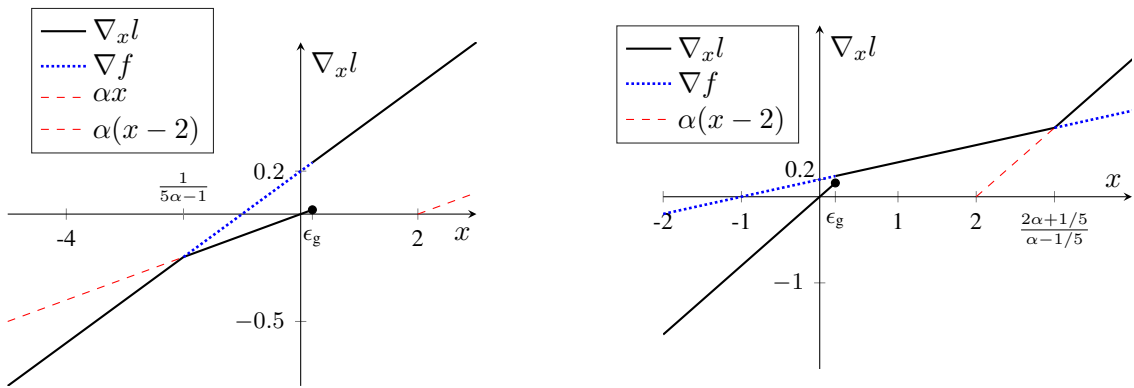
\begin{figure}
    
    \setlength{\figurewidth}{.43\columnwidth}
    \setlength{\figureheight}{.3\columnwidth}
    
    \begin{minipage}[l]{.45\columnwidth}
    \centering
%
%
\begin{tikzpicture}

\begin{axis}[%
width=0.951\figurewidth,
height=\figureheight,
at={(0\figurewidth,0\figureheight)},
scale only axis,
xmin=-5,
xmax=3,
xlabel style={font=\color{white!15!black}},
xtick={-4,2,0.2},
xticklabels={-4,2,$\epsilon_\text{g}$},
extra x ticks={-2},
extra x tick style={
    xticklabel style={yshift=0.5ex, anchor=south}
},
extra x tick labels={$\frac{1}{5\alpha -1}$},
ytick={-0.5,0.2},
xlabel={$x$},
xlabel near ticks,
ymin=-0.8,
ymax=0.8,
ylabel style={font=\color{white!15!black}},
ylabel={$\nabla_x l$},
axis background/.style={fill=white},
axis x line=middle,
axis y line=middle,
xlabel style={at={(axis description cs:0.97,0.49)},anchor=north},
legend style={legend cell align=left, align=left, draw=white!15!black},
legend style={at={(0.05,1.1)},anchor=north west}
]
\addplot [color=black,thick]
  table[row sep=crcr]{%
-5	-0.8\\
-4.9	-0.78\\
-4.8	-0.76\\
-4.7	-0.74\\
-4.6	-0.72\\
-4.5	-0.7\\
-4.4	-0.68\\
-4.3	-0.66\\
-4.2	-0.64\\
-4.1	-0.62\\
-4	-0.6\\
-3.9	-0.58\\
-3.8	-0.56\\
-3.7	-0.54\\
-3.6	-0.52\\
-3.5	-0.5\\
-3.4	-0.48\\
-3.3	-0.46\\
-3.2	-0.44\\
-3.1	-0.42\\
-3	-0.4\\
-2.9	-0.38\\
-2.8	-0.36\\
-2.7	-0.34\\
-2.6	-0.32\\
-2.5	-0.3\\
-2.4	-0.28\\
-2.3	-0.26\\
-2.2	-0.24\\
-2.1	-0.22\\
-2	-0.2\\
-1.9	-0.19\\
-1.8	-0.18\\
-1.7	-0.17\\
-1.6	-0.16\\
-1.5	-0.15\\
-1.4	-0.14\\
-1.3	-0.13\\
-1.2	-0.12\\
-1.1	-0.11\\
-1	-0.1\\
-0.9	-0.09\\
-0.8	-0.08\\
-0.7	-0.07\\
-0.6	-0.06\\
-0.5	-0.05\\
-0.4	-0.04\\
-0.3	-0.03\\
-0.2	-0.02\\
-0.1	-0.01\\
0	0\\
0.1	0.01\\
0.2	0.02\\
};
\addlegendentry{$\nabla_x l$}

\addplot [color=black,thick,forget plot]
  table[row sep=crcr]{%
0.201	0.2402\\
0.301	0.2602\\
0.401	0.2802\\
0.501	0.3002\\
0.601	0.3202\\
0.701	0.3402\\
0.801	0.3602\\
0.901	0.3802\\
1.001	0.4002\\
1.101	0.4202\\
1.201	0.4402\\
1.301	0.4602\\
1.401	0.4802\\
1.501	0.5002\\
1.601	0.5202\\
1.701	0.5402\\
1.801	0.5602\\
1.901	0.5802\\
2.001	0.6002\\
2.101	0.6202\\
2.201	0.6402\\
2.301	0.6602\\
2.401	0.6802\\
2.501	0.7002\\
2.601	0.7202\\
2.701	0.7402\\
2.801	0.7602\\
2.901	0.7802\\
3.001	0.8002\\
};

\addplot [color=black,only marks, mark size=1.5pt,forget plot]
 table[row sep=crcr]{%
 0.2 0.02\\
 };

\addplot [color=blue, line width=1pt, densely dotted]
  table[row sep=crcr]{%
-2	-0.2\\
-1.9	-0.18\\
-1.8	-0.16\\
-1.7	-0.14\\
-1.6	-0.12\\
-1.5	-0.1\\
-1.4	-0.08\\
-1.3	-0.06\\
-1.2	-0.04\\
-1.1	-0.02\\
-1	0\\
-0.9	0.02\\
-0.8	0.04\\
-0.7	0.06\\
-0.6	0.08\\
-0.5	0.1\\
-0.4	0.12\\
-0.3	0.14\\
-0.2	0.16\\
-0.1	0.18\\
0	0.2\\
0.1	0.22\\
0.2	0.24\\
};

\addlegendentry{$\nabla f$}
\addplot [color=red, dashed]
  table[row sep=crcr]{%
-5	-0.5\\
-4.9	-0.49\\
-4.8	-0.48\\
-4.7	-0.47\\
-4.6	-0.46\\
-4.5	-0.45\\
-4.4	-0.44\\
-4.3	-0.43\\
-4.2	-0.42\\
-4.1	-0.41\\
-4	-0.4\\
-3.9	-0.39\\
-3.8	-0.38\\
-3.7	-0.37\\
-3.6	-0.36\\
-3.5	-0.35\\
-3.4	-0.34\\
-3.3	-0.33\\
-3.2	-0.32\\
-3.1	-0.31\\
-3	-0.3\\
-2.9	-0.29\\
-2.8	-0.28\\
-2.7	-0.27\\
-2.6	-0.26\\
-2.5	-0.25\\
-2.4	-0.24\\
-2.3	-0.23\\
-2.2	-0.22\\
-2.1	-0.21\\
-2	-0.2\\
};
\addlegendentry{$\alpha x$}

\addplot [color=red, dashed]
  table[row sep=crcr]{%
2	0\\
2.1	0.01\\
2.2	0.02\\
2.3	0.03\\
2.4	0.04\\
2.5	0.05\\
2.6	0.06\\
2.7	0.07\\
2.8	0.08\\
2.9	0.09\\
3	0.1\\
3.1	0.11\\
};
\addlegendentry{$\alpha (x-2)$}

\end{axis}
\end{tikzpicture}%
    \end{minipage}\hfill
    \begin{minipage}[r]{.45\columnwidth}
    \centering
%
%
\begin{tikzpicture}

\begin{axis}[%
width=0.951\figurewidth,
height=\figureheight,
at={(0\figurewidth,0\figureheight)},
scale only axis,
xmin=-2,
xmax=3.99,
xlabel style={font=\color{white!15!black}},
xlabel={$x$},
xtick={-2,-1,0.2,1,2,3},
xticklabels={-2,-1,$\epsilon_\text{g}$,1,2,$\frac{2\alpha + 1/5}{\alpha -1/5}$},
ytick={-1},
extra y ticks={0.2},
extra y tick style={
    yticklabel style={xshift=-3ex, yshift=.8ex, anchor=west}
},
extra y tick labels={0.2},
ymin=-2,
ymax=1.99,
ylabel style={font=\color{white!15!black}},
ylabel={$\nabla_x l$},
axis background/.style={fill=white},
axis x line=middle,
axis y line=middle,
legend style={legend cell align=left, align=left, draw=white!15!black},
legend style={at={(-0.1,1.0)},anchor=north west}
]
\addplot [color=black,thick]
  table[row sep=crcr]{%
-2	-1.6\\
-1.9	-1.52\\
-1.8	-1.44\\
-1.7	-1.36\\
-1.6	-1.28\\
-1.5	-1.2\\
-1.4	-1.12\\
-1.3	-1.04\\
-1.2	-0.96\\
-1.1	-0.88\\
-1	-0.8\\
-0.9	-0.72\\
-0.8	-0.64\\
-0.7	-0.56\\
-0.6	-0.48\\
-0.5	-0.4\\
-0.4	-0.32\\
-0.3	-0.24\\
-0.2	-0.16\\
-0.1	-0.08\\
0	0\\
0.1	0.08\\
0.2	0.16\\
};
\addlegendentry{$\nabla_x l$}

\addplot [color=black,thick,forget plot]
  table[row sep=crcr]{%
0.201	0.2402\\
0.301	0.2602\\
0.401	0.2802\\
0.501	0.3002\\
0.601	0.3202\\
0.701	0.3402\\
0.801	0.3602\\
0.901	0.3802\\
1.001	0.4002\\
1.101	0.4202\\
1.201	0.4402\\
1.301	0.4602\\
1.401	0.4802\\
1.501	0.5002\\
1.601	0.5202\\
1.701	0.5402\\
1.801	0.5602\\
1.901	0.5802\\
2.001	0.6002\\
2.101	0.6202\\
2.201	0.6402\\
2.301	0.6602\\
2.401	0.6802\\
2.501	0.7002\\
2.601	0.7202\\
2.701	0.7402\\
2.801	0.7602\\
2.901	0.7802\\
3.001	0.8008\\
3.101	0.8808\\
3.201	0.9608\\
3.301	1.0408\\
3.401	1.1208\\
3.501	1.2008\\
3.601	1.2808\\
3.701	1.3608\\
3.801	1.4408\\
3.901	1.5208\\
4.001	1.6008\\
4.101	1.6808\\
4.201	1.7608\\
4.301	1.8408\\
4.401	1.9208\\
4.501	2.0008\\
4.601	2.0808\\
4.701	2.1608\\
4.801	2.2408\\
4.901	2.3208\\
5.001	2.4008\\
};

\addplot [color=black,only marks, mark size=1.5pt,forget plot]
 table[row sep=crcr]{%
 0.2 0.16\\
 };

\addplot [color=blue, line width=1pt, densely dotted]
  table[row sep=crcr]{%
-2	-0.2\\
-1.9	-0.18\\
-1.8	-0.16\\
-1.7	-0.14\\
-1.6	-0.12\\
-1.5	-0.1\\
-1.4	-0.08\\
-1.3	-0.06\\
-1.2	-0.04\\
-1.1	-0.02\\
-1	0\\
-0.9	0.02\\
-0.8	0.04\\
-0.7	0.06\\
-0.6	0.08\\
-0.5	0.1\\
-0.4	0.12\\
-0.3	0.14\\
-0.2	0.16\\
-0.1	0.18\\
0	0.2\\
0.1	0.22\\
0.2	0.24\\
};
\addlegendentry{$\nabla f$}

\addplot [color=blue, line width=1pt, densely dotted,forget plot]
  table[row sep=crcr]{%
3	0.8\\
3.1	0.82\\
3.2	0.84\\
3.3	0.86\\
3.4	0.88\\
3.5	0.9\\
3.6	0.92\\
3.7	0.94\\
3.8	0.96\\
3.9	0.98\\
4	1\\
4.1	1.02\\
4.2	1.04\\
4.3	1.06\\
4.4	1.08\\
4.5	1.1\\
4.6	1.12\\
4.7	1.14\\
4.8	1.16\\
4.9	1.18\\
5	1.2\\
};

\addplot [color=red, dashed]
  table[row sep=crcr]{%
2	0\\
2.1	0.0800000000000001\\
2.2	0.16\\
2.3	0.24\\
2.4	0.32\\
2.5	0.4\\
2.6	0.48\\
2.7	0.559999999999999\\
2.8	0.64\\
2.9	0.72\\
3	0.8\\
};
\addlegendentry{$\alpha(x-2)$}

\end{axis}
\end{tikzpicture}%
    \end{minipage}
    \caption{This figure shows the values of $\nabla_x l$ (solid thick line) for $\alpha=1/10$ (left) and $\alpha=4/5$ (right), where $\epsilon_\text{g}=0.2$. We note that the discontinuity of $\nabla_x l$ is now at $\epsilon_\text{g}>0$, which means that the origin is an asymptotically stable equilibrium in the sense of Lyapunov. The parameter $\epsilon_\text{g}$ has no effect on the constraint $x\geq 2$. The original gradient $\nabla f$ is again shown in blue (dotted) and the functions $\alpha x$ and $\alpha (x-2)$ are shown in red (dashed).}
    \label{fig:simpExample2}
\end{figure}

Algorithm~\ref{Alg:two} summarizes the discussions of the two previous sections. The next section will be concerned with the empirical evaluation of Algorithm~\ref{Alg:two} on various examples. The exact implementation in Python and C++ is available as supplementary material.

\begin{algorithm}
\begin{algorithmic}
\Require $x_0 \in \mathbb{R}^n$, $T>0$, $\alpha T \in (0,1]$, $\epsilon_\text{g}>0$, $\omega\in (0,2)$,\\
\qquad TOL, MAXITER, MAXITER\_PROX, TOL\_PROX
\State $k=0$
\While{$k<\text{MAXITER}$}
\State Determine the set of closed constraints $I_{k}=\{i\in \mathbb{Z}~|~g_i(x_k)\leq \epsilon_\text{g}\}$ 
\vspace{-2pt}\State Define $W_k:=(\nabla h(x_k), \nabla g_i(x_k)_{i\in I_{k}})$ and $D_k:=\mathbb{R}^{n_\text{h}} \times \mathbb{R}_{\geq 0}^{|I_k|}$
\State Define $\bar{g}_k:=(h(x_k),g_i(x_k)_{i\in I_{k}})$ \medskip
\State $j=0$, $\lambda^0=0$ \Comment{initialization with $\lambda_{k-1}$ is also possible}
\While{$j<\text{MAXITER\_PROX}$}
\State $\lambda^{j+1} = \text{prox}_{D_{k}}\left(\lambda^j -\omega D^{-1} (U\T \lambda^{j+1} + \lambda^j - W_k\T \nabla f(x_k) + \alpha \bar{g}_k)\right)$
\If{$|\lambda^{j+1}-\lambda^{j}| \leq \text{TOL\_PROX}$ \textbf{and} $\forall i>n_\text{h}: \lambda_i>0,$\\ \hspace{3cm} $ (W_k\T W_k \lambda^{j+1} -W_k\T \nabla f(x_k))_i + \alpha \bar{g}_{ki} \leq \epsilon_\text{g} \alpha T/2,$}
\State \textbf{break}
\EndIf
\EndWhile
\State $\lambda_k=\lambda^{j+1}$ \medskip
\State Perform the update $x_{k+1} = x_k - T\nabla f(x_k) + T W_k \lambda_k$
\If{$|x_{k+1}-x_k| \leq T\cdot \text{TOL}$} 
\State \Return $x_{k+1}$
\EndIf
\State $k\leftarrow k+1$
\EndWhile
\end{algorithmic}
\caption{Implementation of the gradient descent scheme \eqref{eq:disAlgorithm}.}
\label{Alg:two}
\end{algorithm}

\section{Numerical Examples}\label{Sec:NumExamples}
The following section illustrates the application of Algorithm~\ref{Alg:two} to the following problems: (i) randomly generated quadratic programs, (ii) trust-region optimization, (iii) $\nu$-support vector machines, and (iv) the computation of a catenary subject to nonlinear constraints. The examples (i)-(iii) lead to convex quadratic programs or convex second-order cone programs, whereas the last example is a nonconvex problem. Algorithm~\ref{Alg:two} is implemented in C++ and we use pybind11 \citep{pybind} as a Python interface. The experiments were conducted on a Dell Precision Tower 3620 that runs Ubuntu 20.04LTS and is equipped with an Intel Core i7-6700 processor (8x3.4GHz) and 64GB of random access memory. All matrices were stored in compressed row storage for exploiting sparsity. The parameters of Algorithm~\ref{Alg:two} which were used for the experiments are summarized in Table~\ref{Tab:Params}.

\begin{table}
    \centering
    \begin{tabular}{l|c|c|c|c}
        Parameters & 1) Rand.QP & 2) Trust region & 3) $\nu$-SVM & 4) Catenary  %
         \\ \hline 
         $T$ &  $2/(L+\mu)$ &$2/(\bar{L}_l+\mu)$ &$2/(L+\mu)$ & $2/n$\\
         $\alpha T$ & 0.4 & 0.4 & 0.4 & 0.8\\
         $\epsilon_\text{g}$ & 1e-6 & 1e-6 & 1e-6 & 1e-6\\
         $\omega$ & 1 & 1 & 1 & 1\\
         TOL & 1e-6 & 1e-6 & 1e-6 & 1e-6\\
         MAXITER & 1000 & 1000 & 1000 & 10000\\
         MAXITER\_PROX & 200 & 200 & 200 & 10000\\
         TOL\_PROX & 1e-6 & 1e-6 & 1e-6 & 1e-8\\
    \end{tabular}
    \caption{Parameters of Algorithm~\ref{Alg:two} used for the experiments, where $L$ and $\mu$ refer to the smoothness and strong convexity constants of $f$. The variable $n$ denotes the number of chain links of the catenary, as defined in Section~\ref{Subsec:Cat}.}
    \label{Tab:Params}
\end{table}

We compare Algorithm~\ref{Alg:two} with the state-of-the-art interior-point solver CVXOPT \citep{CVXOPT} for larger problem instances. \hchange{We also show comparisons to projected gradients and Frank-Wolfe, where the projections and Frank-Wolfe updates are computed with the standard optimization library in Python's scientific computing library~\citep[scipy.optimize,][]{2020SciPy-NMeth}. This is motivated by the fact that the scipy optimization library is standard in Python and can solve optimization problems with nonlinear constraints, which parallels the capabilities of Algorithm~\ref{Alg:two}. For the projected gradients implementation we used the same step size as for Algorithm~\ref{Alg:two}, see Table~\ref{Tab:Params}, whereas the Frank-Wolfe implementation follows \cite{Jaeggi}, Algorithm~1.} \hchangeII{We further note that our stopping criterion ensures $|-\nabla f(x_k)+R_k| \leq \text{TOL}$; analogous stopping criteria are used for Frank-Wolfe and projected gradients. We also found that the resulting function values for the different algorithms agree with each other.}

\subsection{Randomly generated quadratic programs}\label{subsec:RandQP}
We generate quadratic programs of the following form:
\begin{align*}
    \min_{x \in \mathbb{R}^n} \quad &\frac{1}{2} x\T Q x + c\T x, \qquad
    \text{s.t.} \quad A_1 x + b_1 \geq 0, \quad A_2 x + b_2 =0,
\end{align*}
where the entries of $A_1\in \mathbb{R}^{n/2 \times n}, A_2\in \mathbb{R}^{n/4 \times n}$ and $b_1\in \mathbb{R}^{n/2}, b_2\in \mathbb{R}^{n/4}$ are independent samples from a normal distribution with zero mean and unit variance, the entries of $c$ are independent samples of a uniform distribution supported on $[-1,1]$, and $Q$ is a diagonal matrix. The first two diagonal elements of $Q$ are set to $1/20$ and $1$, respectively, whereas the remaining elements are independent samples of a uniform distribution in $[1/20,1]$. The condition number of $Q$ is therefore fixed to $20$. The problem dimension $n$ is chosen such that $n/4$ (the number of equality constraints) and $n/2$ the number (of inequality constraints) are integers. We initialize Algorithm~\ref{Alg:two} with $x_0=0$, $\lambda_0=0$.

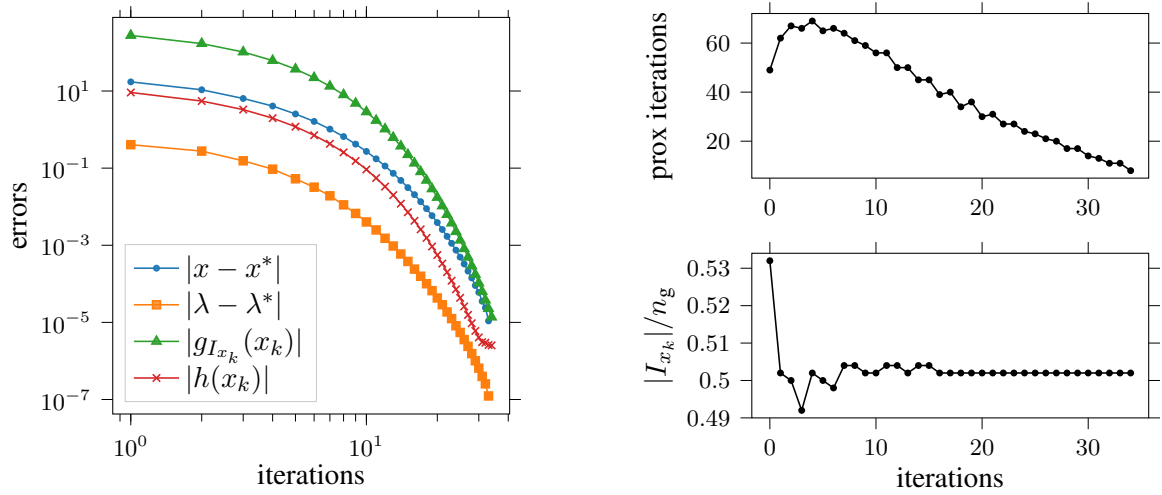
\begin{figure}

    \begin{minipage}[l]{.45\columnwidth}
    \centering
    
    \setlength{\figurewidth}{\columnwidth}
    \setlength{\figureheight}{\columnwidth}

\begin{tikzpicture}

\definecolor{color0}{rgb}{0.12156862745098,0.466666666666667,0.705882352941177}
\definecolor{color1}{rgb}{1,0.498039215686275,0.0549019607843137}
\definecolor{color2}{rgb}{0.172549019607843,0.627450980392157,0.172549019607843}
\definecolor{color3}{rgb}{0.83921568627451,0.152941176470588,0.156862745098039}

\begin{axis}[
height=\figureheight,
legend cell align={left},
legend style={fill opacity=0.8, draw opacity=1, text opacity=1, draw=white!80!black},
log basis x={10},
log basis y={10},
tick align=outside,
tick pos=both,
width=\figurewidth,
x grid style={white!69.0196078431373!black},
xlabel={iterations},
xmin=0.838351323225448, xmax=40.5557897483711,
xmode=log,
xtick style={color=black},
y grid style={white!69.0196078431373!black},
ylabel={errors},
ymin=4.20907009421747e-08, ymax=810.165310620172,
ymode=log,
ytick style={color=black},
legend pos={south west}
]
\addplot [semithick, color0, mark=*, mark size=1, mark options={solid}]
table {%
0 31.0265220102627
1 17.2530443515249
2 10.7657934316667
3 6.40082221098293
4 4.0755616485093
5 2.53513644215474
6 1.62686114501889
7 1.02734331777638
8 0.659283061849395
9 0.419841920073207
10 0.271194892547387
11 0.174357195098491
12 0.113297202853936
13 0.0734031916214155
14 0.0479821620470284
15 0.0312913954649227
16 0.0205568827165657
17 0.0134800553693251
18 0.00889246277553813
19 0.00585624151166773
20 0.00387612634621819
21 0.0025601101180018
22 0.00169872057139716
23 0.00112302772640322
24 0.000746355603362562
25 0.00049217647117076
26 0.00032728115070492
27 0.000213734539455894
28 0.000142054852722284
29 9.03106435725064e-05
30 6.00323501641202e-05
31 3.54169994385322e-05
32 2.39914576386057e-05
33 1.09383567250107e-05
};
\addlegendentry{$|x-x^*|$}
\addplot [semithick, color1, mark=square*, mark size=1.5, mark options={solid}]
table {%
0 0.596322741806995
1 0.405339502322486
2 0.276669711235918
3 0.154792611305209
4 0.0939888398788842
5 0.0529468725514675
6 0.0320842611844387
7 0.0191621326164341
8 0.0111601226642254
9 0.00668693504502981
10 0.0040204863463647
11 0.00249534355525171
12 0.00151795442811278
13 0.000955786213803625
14 0.000592964010832414
15 0.000381124208610429
16 0.000240309510206339
17 0.000157138977383077
18 0.000100465923531948
19 6.6535265071573e-05
20 4.29598399181375e-05
21 2.87111023362621e-05
22 1.8652286071888e-05
23 1.25322300230612e-05
24 8.16508305473128e-06
25 5.49017422465999e-06
26 3.57607284902733e-06
27 2.38865252445688e-06
28 1.54951014446405e-06
29 1.0120889918159e-06
30 6.50843691369651e-07
31 3.97946876273505e-07
32 2.54575370537973e-07
33 1.23497542793685e-07
};
\addlegendentry{$|\lambda-\lambda^*|$}
\addplot [semithick, color2, mark=triangle*, mark size=2, mark options={solid}]
table {%
0 16.2506427055427
1 276.122301963577
2 169.717770950808
3 102.66945402644
4 61.742505446589
5 36.9713709059114
6 22.1727600584395
7 13.3105858294424
8 7.98360216809787
9 4.78986908542081
10 2.87392149043118
11 1.72496641166184
12 1.03497987643154
13 0.620970999225241
14 0.372607866368284
15 0.223564770711565
16 0.134138671463889
17 0.0804832638371083
18 0.0482899980803167
19 0.0289740594668585
20 0.0173844722954231
21 0.0104307327858987
22 0.00625848382830052
23 0.00375512247790631
24 0.00225311755418778
25 0.00135189184076143
26 0.000811180060559176
27 0.000486722526323327
28 0.000292070289252904
29 0.000175257445250658
30 0.000105183936850851
31 6.31260786040887e-05
32 3.79019200585118e-05
33 2.27643161438483e-05
34 1.36816424664831e-05
};
\addlegendentry{$|g_{I_{x_k}}(x_k)|$}
\addplot [semithick, color3, mark=x, mark size=2, mark options={solid}]
table {%
0 15.2607061591034
1 9.15642377989377
2 5.49385431157712
3 3.29631241894607
4 1.97778737737766
5 1.18667243688457
6 0.712003454305976
7 0.427202066353079
8 0.256321240107445
9 0.15379271699484
10 0.0922756179707469
11 0.0553653432816735
12 0.0332191531397718
13 0.0199314256131441
14 0.0119587578584723
15 0.00717519530583707
16 0.00430500006987689
17 0.00258292428571761
18 0.00154960966546919
19 0.000929710068994906
20 0.000557681022634419
21 0.000334590386562211
22 0.000200573353637165
23 0.000120361060682421
24 7.20515821040426e-05
25 4.32802955985258e-05
26 2.58775183874011e-05
27 1.56234997403694e-05
28 9.4979279156772e-06
29 5.9947558984802e-06
30 4.08081775689113e-06
31 3.07871182206253e-06
32 2.9707368843249e-06
33 2.59039897321955e-06
34 2.52827662485235e-06
};
\addlegendentry{$|h(x_k)|$}
\end{axis}

\end{tikzpicture}
    \end{minipage}\hfill
    \begin{minipage}[r]{.45\columnwidth}

    \setlength{\figurewidth}{\columnwidth}
    \setlength{\figureheight}{.55\columnwidth}

    \centering
\begin{tikzpicture}

\begin{groupplot}[group style={group size=1 by 2}]
\nextgroupplot[
height=\figureheight,
tick align=outside,
tick pos=both,
width=\figurewidth,
x grid style={white!69.0196078431373!black},
xmin=-1.7, xmax=35.7,
xtick style={color=black},
y grid style={white!69.0196078431373!black},
ylabel={prox iterations},
ymin=4.95, ymax=72.05,
ytick style={color=black}
]
\addplot [semithick, black, mark=*, mark size=1, mark options={solid}]
table {%
0 49
1 62
2 67
3 66
4 69
5 65
6 66
7 64
8 61
9 59
10 56
11 56
12 50
13 50
14 45
15 45
16 39
17 40
18 34
19 36
20 30
21 31
22 27
23 27
24 24
25 23
26 21
27 20
28 17
29 17
30 14
31 13
32 11
33 11
34 8
};

\nextgroupplot[
height=\figureheight,
tick align=outside,
tick pos=both,
width=\figurewidth,
x grid style={white!69.0196078431373!black},
xlabel={iterations},
xmin=-1.7, xmax=35.7,
xtick style={color=black},
y grid style={white!69.0196078431373!black},
ylabel={$|I_{x_k}|/n_\text{g}$},
ymin=0.49, ymax=0.534,
ytick style={color=black}
]
\addplot [semithick, black, mark=*, mark size=1, mark options={solid}]
table {%
0 0.532
1 0.502
2 0.5
3 0.492
4 0.502
5 0.5
6 0.498
7 0.504
8 0.504
9 0.502
10 0.502
11 0.504
12 0.504
13 0.502
14 0.504
15 0.504
16 0.502
17 0.502
18 0.502
19 0.502
20 0.502
21 0.502
22 0.502
23 0.502
24 0.502
25 0.502
26 0.502
27 0.502
28 0.502
29 0.502
30 0.502
31 0.502
32 0.502
33 0.502
34 0.502
};
\end{groupplot}

\end{tikzpicture}
    \end{minipage}
    \caption{Trajectories for a single randomly generated convex quadratic program with $n=1000$. The figure on the left indicates linear convergence of the iterate $x_k$, the multiplier $\lambda_k$, and the constraint violations. The figures on the right display the number of iterations of the inner loop of Algorithm~\ref{Alg:two} (top) and the ratio of constraints that enter $|I_{x_k}|$ (bottom). \hchange{The solution $x^*$ is computed (approximately) with CVXOPT and a tolerance of $1e-8$.} }
    \label{Fig:randomQP1}

\end{figure}

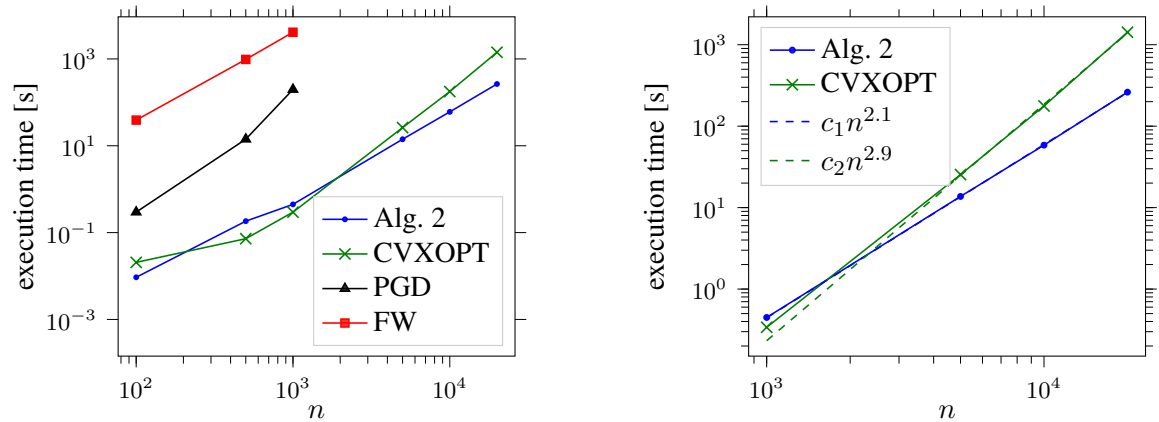
\begin{figure}

    \setlength{\figurewidth}{.45\columnwidth}
    \setlength{\figureheight}{.4\columnwidth}
    
    \begin{minipage}[l]{.45\columnwidth}
    \centering
\begin{tikzpicture}

\begin{axis}[
height=\figureheight,
legend cell align={left},
legend style={fill opacity=0.8, draw opacity=1, text opacity=1, at={(0.97,0.03)}, anchor=south east, draw=white!80!black},
log basis x={10},
log basis y={10},
tick align=outside,
tick pos=both,
width=\figurewidth,
x grid style={white!69.0196078431373!black},
xlabel={$n$},
xmin=76.7270499010926, xmax=26066.4264112612,
xmode=log,
xtick style={color=black},
y grid style={white!69.0196078431373!black},
ylabel={execution time [s]},
ymin=0.000142958995663009, ymax=9330.74552842946,
ymode=log,
ytick style={color=black}
]
\addplot [semithick, blue, mark=*, mark size=.8, mark options={solid}]
table {%
100 0.00934828806202859
500 0.18339656596072
1000 0.446982859051786
5000 14.1292053840589
10000 60.3724751899717
20000 264.774826788926
};
\addlegendentry{Alg.~\ref{Alg:two}}
\addplot [semithick, green!50!black, mark=x, mark size=3, mark options={solid}]
table {%
100 0.020619775983505
500 0.0723790290066972
1000 0.295663678087294
5000 26.1862691019196
10000 177.176193838008
20000 1421.70829877094
};
\addlegendentry{CVXOPT}
\addplot [semithick, black, mark=triangle*, mark size=2, mark options={solid}]
table {%
100 0.296458181925118
500 14.1646726869512
1000 197.883506325074
};
\addlegendentry{PGD}
\addplot [semithick, red, mark=square*, mark size=1.5, mark options={solid}]
table {%
100 39.031354442006
500 973.922650860972
1000 4118.15309932793
};
\addlegendentry{FW}
\end{axis}

\end{tikzpicture}
    \end{minipage}\hfill
    \begin{minipage}[r]{.45\columnwidth}
    \centering
\begin{tikzpicture}

\begin{axis}[
height=\figureheight,
legend cell align={left},
legend style={fill opacity=0.8, draw opacity=1, text opacity=1, at={(0.03,0.97)}, anchor=north west, draw=white!80!black},
log basis x={10},
log basis y={10},
tick align=outside,
tick pos=both,
width=\figurewidth,
x grid style={white!69.0196078431373!black},
xlabel={$n$},
xmin=860.891659331734, xmax=23231.7269928308,
xmode=log,
xtick style={color=black},
y grid style={white!69.0196078431373!black},
ylabel={execution time [s]},
ymin=0.150232198654723, ymax=2201.23094673941,
ymode=log,
ytick style={color=black}
]
\addplot [semithick, blue, mark=*, mark size=1, mark options={solid}]
table {%
1000 0.447985931998119
5000 13.7157339040423
10000 58.5082810309832
20000 261.522347006947
};
\addlegendentry{Alg.~\ref{Alg:two}}
\addplot [semithick, green!50!black, mark=x, mark size=3, mark options={solid}]
table {%
1000 0.339570223994087
5000 25.4404051849851
10000 177.234124645009
20000 1423.33261581301
};
\addlegendentry{CVXOPT}
\addplot [semithick, blue, dashed]
table {%
1000 0.444085383805538
5000 13.6093334298396
10000 59.4267160490632
20000 259.493574654642
};
\addlegendentry{$c_1 n^{2.1}$}
\addplot [semithick, green!50!black, dashed]
table {%
1000 0.232339062002444
5000 24.8447559265647
10000 185.834320204169
20000 1390.00739905923
};
\addlegendentry{$c_2 n^{2.9}$}
\end{axis}

\end{tikzpicture}
    \end{minipage}
    \caption{\hchange{This plot shows the results obtained for randomly generated quadratic programs. The figure on the left includes runtime comparisons to CVXOPT, projected gradients, and Frank-Wolfe. The figure on the right includes a detailed comparison between CVXOPT and Algorithm~\ref{Alg:two} for $n\geq 1000$.} \hchange{Algorithm~\ref{Alg:two} seems to achieve a better scaling with respect to the problem size $n$ (an exponent of 2.1 instead of 2.9), leading to a speedup of a factor of roughly 5.5 for $n=2 \cdot 10^{4}$.}}
    \label{Fig:randomQP2}
\end{figure}

The results for a randomly generated quadratic program of size $n=1000$ are shown in Figure~\ref{Fig:randomQP1}. We observe very little difference between different randomly generated programs. We also observe little change when increasing $n$; even though the computational complexity increases, the number of iterations required for convergence remains at about $35$, the maximum number of iterations that are required for computing $\lambda_k$ remains at about $70$, and only about 50\% of the inequality constraints are active. Figure~\ref{Fig:randomQP2} compares the runtime of Algorithm~\ref{Alg:two} to the interior-point solver CVXOPT, \hchange{projected gradients and Frank-Wolfe.}\footnote{We ran CVXOPT by exploiting sparsity of the Hessian and standard tolerance settings. \hchange{We also used the default settings in scipy.optimize.}} \hchange{The runtime of both Frank-Wolfe and projected gradients is comparably large and as a result, we ran these methods on smaller sized problems with $n\leq 1000$. Moreover, we stopped Frank-Wolfe after 1000 iterations even though the specified tolerance settings of $1e-6$ were not reached.} Compared to CVXOPT, the execution time of Algorithm~\ref{Alg:two} scales favorably in the problem dimension $n$ (see Figure~\ref{Fig:randomQP2}, right panel). For $n=20,000$ the execution time is roughly reduced by a factor of five \hchange{compared to CVXOPT}; even larger improvements seem possible when increasing $n$ further.

\subsection{Trust-region optimization}
In order to demonstrate that Algorithm~\ref{Alg:two} can efficiently handle nonlinear constraints, we extended the example of the previous section and considered the trust-region optimization
\begin{align*}
    \min_{x \in \mathbb{R}^n} \quad &\frac{1}{2} x\T Q x + c\T x, \qquad
    \text{s.t.} \quad A_1 x \geq 0, \quad A_2 x = 0,  \quad |x|^2\leq 1,
\end{align*}
where the matrices $Q$ and $A_1\in \mathbb{R}^{n/2 \times n}, A_2 \in \mathbb{R}^{n/4 \times n}$ and the vector $c$ are generated as in Section~\ref{subsec:RandQP}. According to Appendix~\ref{App:ProofPropCG}, the constant $L_l$ can be upper bounded as 
\begin{equation*}
    L_l \leq \bar{L}_l:= \alpha + L (2+ |Q^{-1} c| \sqrt{2}/2).
\end{equation*}
We choose $T=2/(\bar{L}_l+\mu)$ and $\alpha T=0.4$, which parallels the previous section. Figure~\ref{Fig:TR} (left) shows the number of iterations needed for computing $\lambda_k$ and the evolution of $|x_k|$ on an example with $n=1000$. The iterations of the inner loop are comparable to Section~\ref{subsec:RandQP}. The constraint $|x_k|\leq 1$ is initially not active leading to a violation at the fourth iteration. At this point, the constraint enters the set $I_{x_k}$ and its violation decreases linearly over the remaining iterations. Figure~\ref{Fig:TR} (right) shows how the execution time scales with the problem size $n$. Compared to CVXOPT, a speedup of up to two orders of magnitude is achieved.

\begin{figure}
   
    \setlength{\figurewidth}{.5\columnwidth}
    \setlength{\figureheight}{.25\columnwidth}
    
    \begin{minipage}[l]{.45\columnwidth}
    
    \setlength{\figurewidth}{\columnwidth}
    \setlength{\figureheight}{.55\columnwidth}
    \centering
\begin{tikzpicture}

\begin{groupplot}[group style={group size=1 by 2}]
\nextgroupplot[
height=\figureheight,
tick align=outside,
tick pos=both,
width=\figurewidth,
x grid style={white!69.0196078431373!black},
xmin=-1.35, xmax=28.35,
xtick style={color=black},
y grid style={white!69.0196078431373!black},
ylabel={prox iterations},
ymin=-2.05, ymax=87.05,
ytick style={color=black}
]
\addplot [semithick, black, mark=*, mark size=1, mark options={solid}]
table {%
0 83
1 57
2 56
3 56
4 46
5 43
6 41
7 39
8 37
9 34
10 32
11 30
12 27
13 25
14 23
15 21
16 19
17 17
18 15
19 13
20 10
21 8
22 7
23 6
24 4
25 4
26 3
27 2
};

\nextgroupplot[
height=\figureheight,
tick align=outside,
tick pos=both,
width=\figurewidth,
x grid style={white!69.0196078431373!black},
xlabel={iterations},
xmin=-1.4, xmax=29.4,
xtick style={color=black},
y grid style={white!69.0196078431373!black},
ylabel={\(\displaystyle |x_k|\)},
ymin=-0.0556931306175385, ymax=1.16955574296831,
ytick style={color=black}
]
\addplot [semithick, black, mark=*, mark size=1, mark options={solid}]
table {%
0 0
1 0.377352045522239
2 0.748599781472815
3 1.11386261235077
4 1.06832554015468
5 1.04099812311009
6 1.02459981100196
7 1.0147601891595
8 1.00885620824764
9 1.0053137537869
10 1.00318826079098
11 1.00191295890642
12 1.00114777600954
13 1.00068866577833
14 1.00041319950861
15 1.00024791971424
16 1.0001487518303
17 1.00008925109851
18 1.00005355065922
19 1.00003213039561
20 1.00001927823742
21 1.00001156694249
22 1.00000694016552
23 1.00000416409932
24 1.0000024984596
25 1.00000149907576
26 1.00000089944546
27 1.00000053966728
28 1.00000032380037
};
\end{groupplot}

\end{tikzpicture}
    
    \end{minipage}\hfill
    \begin{minipage}[r]{.45\columnwidth}
    
    \setlength{\figurewidth}{\columnwidth}
    \setlength{\figureheight}{1.05\columnwidth}
    \centering
\begin{tikzpicture}

\begin{axis}[
height=\figureheight,
legend cell align={left},
legend style={fill opacity=0.8, draw opacity=1, text opacity=1, at={(0.03,0.97)}, anchor=north west, draw=white!80!black},
log basis x={10},
log basis y={10},
tick align=outside,
tick pos=both,
width=\figurewidth,
x grid style={white!69.0196078431373!black},
xlabel={$n$},
xmin=76.7270499010926, xmax=26066.4264112612,
xmode=log,
xtick style={color=black},
y grid style={white!69.0196078431373!black},
ylabel={execution time [s]},
ymin=0.00302363478571102, ymax=23262.3388143668,
ymode=log,
ytick style={color=black},
legend pos=south east
]
\addplot [semithick, blue, mark=*, mark size=0.8, mark options={solid}]
table {%
100 0.00621634302660823
500 0.100896168034524
1000 0.318254444980994
5000 5.92189826397225
10000 27.6736904249992
20000 122.667250350001
};
\addlegendentry{Alg.~\ref{Alg:two}}
\addplot [semithick, green!50!black, mark=x, mark size=3, mark options={solid}]
table {%
100 0.0804486649576575
500 0.926680776057765
1000 3.65211584803183
5000 173.950378904003
10000 1683.93262361002
20000 11314.8223215881
};
\addlegendentry{CVXOPT}
\addplot [semithick, black, mark=triangle*, mark size=2, mark options={solid}]
table {%
100 0.5999885071069
500 34.0745108139236
1000 486.360369032016
};
\addlegendentry{PGD}
\addplot [semithick, red, mark=square*, mark size=1.5, mark options={solid}]
table {%
100 0.363529664929956
500 48.9326473691035
1000 1014.05425189296
};
\addlegendentry{FW}
\end{axis}

\end{tikzpicture}
    
    \end{minipage}
    \caption{The left panel shows a trajectory of Algorithm~\ref{Alg:two} for the trust-region problem. The top left indicates the number of iterations required in the inner loop for computing the multiplier $\lambda_k$, which decreases steadily. The constraint $|x_k|\leq 1$ is initially not active, leading to a violation at the fourth iteration. The violation then decreases at a linear rate, which parallels the continuous-time case. \hchange{The right graph shows the execution times for the trust-region problem, where we again ran projected gradients and Frank-Wolfe only on smaller sized problems with $n\leq 1000$.} Compared to CVXOPT, Algorithm~\ref{Alg:two} achieves a speedup of roughly two orders of magnitude for large $n$; the scaling with $n$ seems similar.}
    \label{Fig:TR}
\end{figure}
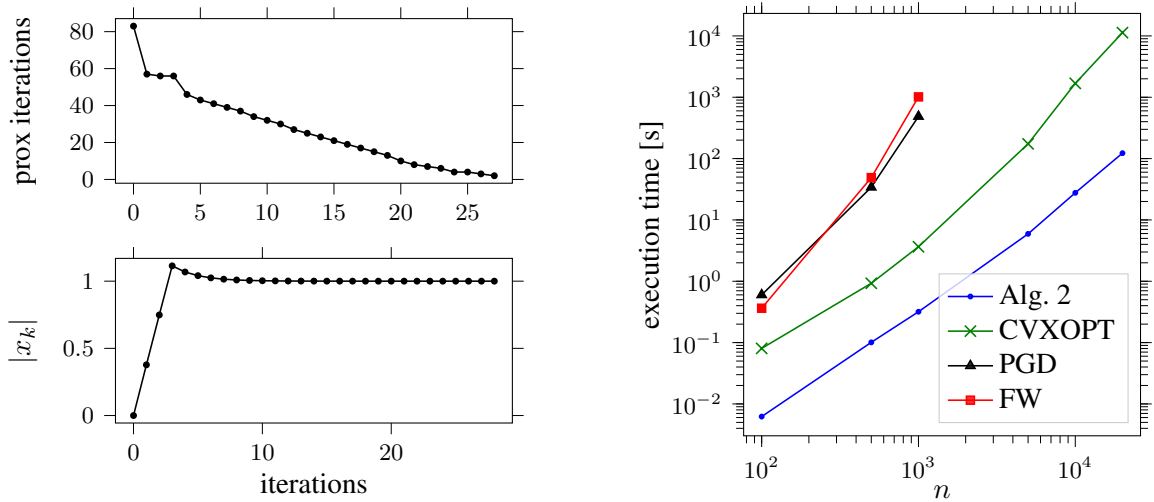

\subsection{$\nu$-support vector machine}
We used the support vector machine formulation suggested by \citet{Schoelkopf}, which leads to the following quadratic program:
\begin{align*}
    \min_{x\in \mathbb{R}^{n_\text{s}}} \quad &\frac{1}{2} \sum_{i=1}^{n_\text{s}} \sum_{j=1}^{n_\text{s}} x_i x_j l_i l_j k(r_i,r_j) + \frac{\nu_1}{2} |x|^2 \\
    \text{s.t.} \quad &0 \leq x_i \leq 1/n_{\text{s}}, \quad \sum_{n=1}^{n_\text{s}} x_i l_i = 0, \quad \sum_{i=0}^{n_\text{s}} x_i \geq \nu_2,
\end{align*}
where $r_i \in \mathbb{R}^2$ are the training samples with labels $l_i \in \{-1,1\}$, $i=1,\dots,n_\text{s}$, the integer $n_\text{s}>0$ denotes the number of training samples, $\nu_1$ and $\nu_2$ are regularization parameters, and $k:\mathbb{R}^2 \rightarrow \mathbb{R}^2$ is the kernel function. The kernel is chosen to be a radial basis function kernel with unit standard deviation. We set $\nu_1=0.1 \mu_{\text{k}}$ and $\nu_2=0.1$, where $\mu_{\text{k}}$ denotes the smallest eigenvalue of the kernel matrix $k(r_i,r_j)$. The parameter $\nu_1$ improves the conditioning of the Hessian, whereas the parameter $\nu_2$ can be interpreted as an upper bound on the fraction of margin errors; i.e., the training samples which lie on the ``wrong'' side of the boundary. It is clear that Algorithm~\ref{Alg:two}, which is based on gradient descent, has difficulties with ill-conditioned objective functions (its rate scales with $\mu/L$). The purpose of the regularization with $\nu_1$ is to reduce these effects.

We generated the training samples in the following way: The points with label +1 were generated in polar coordinates where the radius is sampled from a normal distribution with mean two and standard deviation 0.5, and the angle was uniformly sampled in $[0,2\pi)$. The points with label -1 were generated in polar coordinates where the radius was sampled from a normal distribution with mean zero and standard deviation 0.5, and the angle was uniformly sampled in $[0,2\pi)$. As an example, the training data and the resulting classifier are shown in Figure~\ref{Fig:nuSVM1} for $n_\text{s}=1000$. Due to the nature of the problem, only very few inequality constraints tend to be active at the optimum (in this case just one). The numerical results indicate that Algorithm~\ref{Alg:two} can indeed take advantage of this fact and identifies the correct active inequality constraint after very few iterations (in this case just one). The number of constraints that enter the computation of the reaction force $R_k$ is therefore significantly reduced after the first iterations enabling rapid convergence of the inner loop of Algorithm~\ref{Alg:two}. 

Figure~\ref{Fig:nuSVM1Exec} shows how the execution time scales with the problem dimension $n_\text{s}$. Compared to CVXOPT, we observe a speedup of a factor of five across all problem instances. The scaling with $n_\text{s}$ seems similar for the two methods. \hchange{Compared to projected gradients and the Frank-Wolfe implementation a speed-up of several orders of magnitude is obtained. Furthermore, Algorithm~\ref{Alg:two} also works out-of-the-box for nonconvex constraints, as is highlighted with the next example.} 

\begin{figure}

    \begin{minipage}[l]{.45\columnwidth}
    \centering
    
    \setlength{\figurewidth}{1.05\columnwidth}
    \setlength{\figureheight}{1.05\columnwidth}

    \input{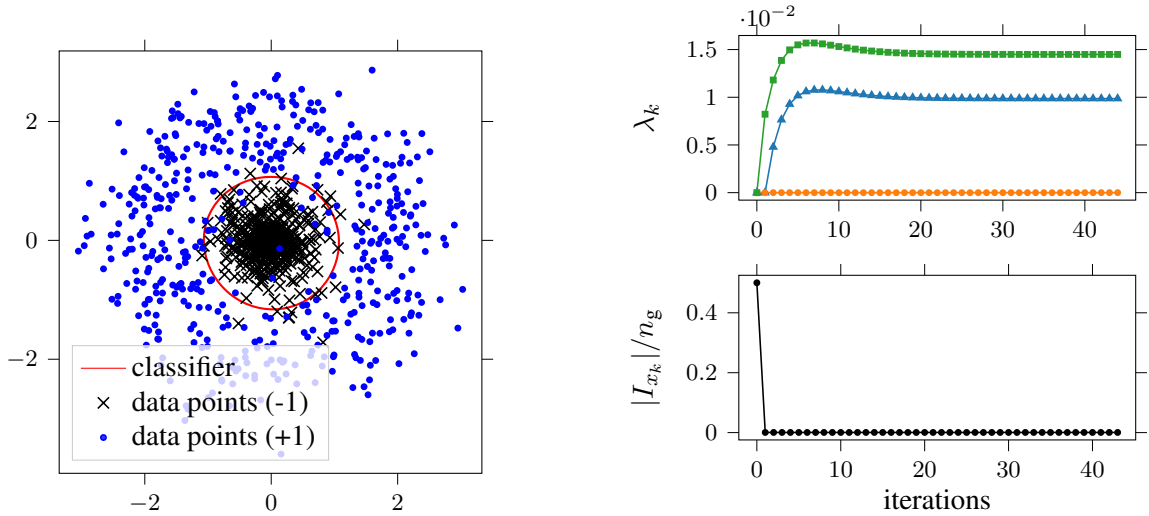}
    \end{minipage}\hfill
    \begin{minipage}[r]{.45\columnwidth}

    \setlength{\figurewidth}{\columnwidth}
    \setlength{\figureheight}{.55\columnwidth}

    \centering
\begin{tikzpicture}

\definecolor{color0}{rgb}{0.12156862745098,0.466666666666667,0.705882352941177}
\definecolor{color1}{rgb}{1,0.498039215686275,0.0549019607843137}
\definecolor{color2}{rgb}{0.172549019607843,0.627450980392157,0.172549019607843}

\begin{groupplot}[group style={group size=1 by 2}]
\nextgroupplot[
height=\figureheight,
tick align=outside,
tick pos=both,
width=\figurewidth,
x grid style={white!69.0196078431373!black},
xmin=-2.2, xmax=46.2,
xtick style={color=black},
y grid style={white!69.0196078431373!black},
ylabel={\(\displaystyle \lambda_k\)},
ymin=-0.000784466420187006, ymax=0.0164737948239271,
ytick style={color=black}
]
\addplot [semithick, color0, mark=triangle*, mark size=1.5, mark options={solid}]
table {%
0 0
1 0
2 0.00478720152818964
3 0.0076447761760491
4 0.00928496193272565
5 0.010170299283715
6 0.0105982355346979
7 0.0107577689175972
8 0.0107671875441554
9 0.0106990803069012
10 0.0105968149744664
11 0.0104853213460473
12 0.0103780917832319
13 0.0102816827811659
14 0.0101985757587641
15 0.0101289682698665
16 0.0100718739568699
17 0.0100257804127125
18 0.00998902798218086
19 0.00996001534886979
20 0.00993729998475176
21 0.00991963674215101
22 0.00990598170199304
23 0.009895477942035
24 0.00988743320300506
25 0.00988129520813993
26 0.00987662776883077
27 0.00987308921551421
28 0.00987041375399803
29 0.0098683958188401
30 0.00986687721925849
31 0.00986573674824674
32 0.00986488188821808
33 0.00986424225689241
34 0.00986376447111803
35 0.00986340814963042
36 0.00986314282035365
37 0.00986294553952604
38 0.00986279906675841
39 0.00986269047151756
40 0.00986261007260227
41 0.00986255063344078
42 0.0098625067531268
43 0.00986247440668987
44 0.00986245059878619
};
\addplot [semithick, color1, mark=*, mark size=1, mark options={solid}]
table {%
0 0
1 0
2 0
3 0
4 0
5 0
6 0
7 0
8 0
9 0
10 0
11 0
12 0
13 0
14 0
15 0
16 0
17 0
18 0
19 0
20 0
21 0
22 0
23 0
24 0
25 0
26 0
27 0
28 0
29 0
30 0
31 0
32 0
33 0
34 0
35 0
36 0
37 0
38 0
39 0
40 0
41 0
42 0
43 0
44 0
};
\addplot [semithick, color2, mark=square*, mark size=1., mark options={solid}]
table {%
0 0
1 0.00822085070448518
2 0.0118077030540806
3 0.0138611515407122
4 0.0149649162911507
5 0.0154937774042507
6 0.0156856410262824
7 0.0156893284037401
8 0.0155961460198436
9 0.0154606416027554
10 0.0153141696175571
11 0.015173692698211
12 0.0150474406616955
13 0.0149385080067657
14 0.0148471079524494
15 0.0147719580607017
16 0.0147111101477365
17 0.0146624290825034
18 0.0146238533569649
19 0.0145935229599634
20 0.0145698289922197
21 0.0145514191779523
22 0.0145371803060471
23 0.0145262102278766
24 0.0145177867215481
25 0.014511337222637
26 0.0145064114050689
27 0.0145026574034188
28 0.0144998017931415
29 0.0144976330897616
30 0.0144959883646639
31 0.0144947425232881
32 0.0144937998010629
33 0.0144930870726594
34 0.0144925486229587
35 0.0144921420831772
36 0.0144918352874228
37 0.0144916038509947
38 0.0144914293111614
39 0.0144912977040311
40 0.0144911984780458
41 0.0144911236663393
42 0.0144910672575205
43 0.0144910247181257
44 0.0144909926307205
};

\nextgroupplot[
height=\figureheight,
tick align=outside,
tick pos=both,
width=\figurewidth,
x grid style={white!69.0196078431373!black},
xlabel={iterations},
xmin=-2.15, xmax=45.15,
xtick style={color=black},
y grid style={white!69.0196078431373!black},
ylabel={\(\displaystyle |I_{x_k}|/n_\text{g}\)},
ymin=-0.024487756121939, ymax=0.525237381309345,
ytick style={color=black}
]
\addplot [semithick, black, mark=*, mark size=1, mark options={solid}]
table {%
0 0.500249875062469
1 0.000499750124937531
2 0.000499750124937531
3 0.000499750124937531
4 0.000499750124937531
5 0.000499750124937531
6 0.000499750124937531
7 0.000499750124937531
8 0.000499750124937531
9 0.000499750124937531
10 0.000499750124937531
11 0.000499750124937531
12 0.000499750124937531
13 0.000499750124937531
14 0.000499750124937531
15 0.000499750124937531
16 0.000499750124937531
17 0.000499750124937531
18 0.000499750124937531
19 0.000499750124937531
20 0.000499750124937531
21 0.000499750124937531
22 0.000499750124937531
23 0.000499750124937531
24 0.000499750124937531
25 0.000499750124937531
26 0.000499750124937531
27 0.000499750124937531
28 0.000499750124937531
29 0.000499750124937531
30 0.000499750124937531
31 0.000499750124937531
32 0.000499750124937531
33 0.000499750124937531
34 0.000499750124937531
35 0.000499750124937531
36 0.000499750124937531
37 0.000499750124937531
38 0.000499750124937531
39 0.000499750124937531
40 0.000499750124937531
41 0.000499750124937531
42 0.000499750124937531
43 0.000499750124937531
};
\end{groupplot}

\end{tikzpicture}
    \end{minipage}
    \caption{This figure shows the training data and the resulting classifier (left), as well as the dual variables $\lambda_k$ (top right) and the percentage of constraints that are active (bottom right). Only two dual variables are nonzero: The first one corresponds to the equality constraints (blue line, triangular marks), whereas the second one corresponds to the support constraint (green line, square marks). All remaining constraints are inactive.}
    \label{Fig:nuSVM1}

\end{figure}

\begin{figure}
\setlength{\figurewidth}{.5\columnwidth}
\setlength{\figureheight}{.4\columnwidth}
\centering
\begin{tikzpicture}

\begin{axis}[
height=\figureheight,
legend cell align={left},
legend style={fill opacity=0.8, draw opacity=1, text opacity=1, at={(0.97,0.03)}, anchor=south east, draw=white!80!black},
log basis x={10},
log basis y={10},
tick align=outside,
tick pos=both,
width=\figurewidth,
x grid style={white!69.0196078431373!black},
xlabel={$n$},
xmin=79.4328234724281, xmax=12589.2541179417,
xmode=log,
xtick style={color=black},
y grid style={white!69.0196078431373!black},
ylabel={execution time [s]},
ymin=0.00498447385200959, ymax=5214.71935284061,
ymode=log,
ytick style={color=black}
]
\addplot [semithick, blue, mark=*, mark size=.8, mark options={solid}]
table {%
100 0.00935918500181288
500 0.0596687720390037
1000 0.202956121996976
5000 6.30003201006912
10000 39.5290712949354
};
\addlegendentry{Alg.~\ref{Alg:two}}
\addplot [semithick, green!50!black, mark=x, mark size=3, mark options={solid}]
table {%
100 0.0190279979724437
500 0.270598810049705
1000 1.07230346393771
5000 37.2783174070064
10000 163.624870460015
};
\addlegendentry{CVXOPT}
\addplot [semithick, black, mark=triangle*, mark size=2, mark options={solid}]
table {%
100 0.321677375002764
500 17.8351045510499
1000 169.808370316983
};
\addlegendentry{PGD}
\addplot [semithick, red, mark=square*, mark size=1.5, mark options={solid}]
table {%
100 29.10206392908
500 538.948184055043
1000 2777.23244649696
};
\addlegendentry{FW}
\end{axis}

\end{tikzpicture}
\caption{\hchange{This figure shows the execution times for the different optimization algorithms on the $\nu$-support vector machine problem.}}
\label{Fig:nuSVM1Exec}
\end{figure}
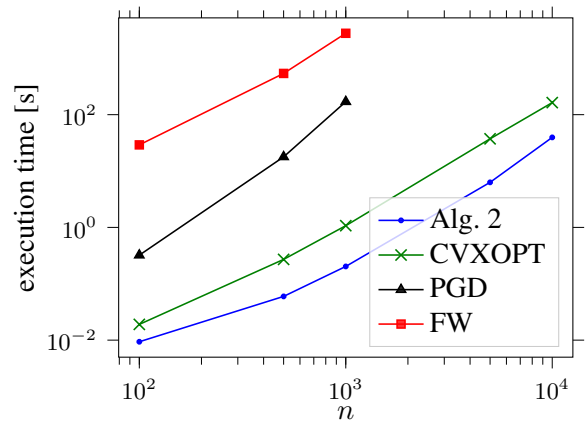

\subsection{Catenary}\label{Subsec:Cat}
We consider an idealized chain of length two, which has $n$ chain links and is suspended at the points $(0,0)$ and $(1,0)$ (in a two-dimensional coordinate system). The aim is to solve the following problem:
\begin{align*}
    \min_{(x,y) \in \mathbb{R}^{n+1} \times \mathbb{R}^{n+1}} \quad &\frac{9.81}{n+1} \sum_{i=2}^{n}  y_i\\
    \text{s.t.} \quad &|x_i-x_{i+1}|^2 + |y_i-y_{i+1}|^2 = 4/n^2, \\ &|x_i-0.5|^2 + |y_i+0.8|^2 \geq 0.5^2, \quad i=1,\dots,n\\
    &(x_1,y_1)=(0,0), \quad (x_{n+1},y_{n+1})=(1,0).
\end{align*}
The position of the $i$th joint is described by the tuple $(x_i,y_i)$ and we have included the nonlinear constraint that each joint is required to lie outside a circle centered at $(0.5,-0.8)$ with radius $0.5$ (the chain therefore lies on a circular object). The cost function captures the potential energy of the chain. We found that a time step of $T=2/n$ works well, which can be motivated by the fact that $L_l$ is roughly $\mathcal{O}(n)$ (considering the continuous limit of the chain). The results for a chain of length $n=40$ can be found in Figure~\ref{Fig:catenary}. Starting from a random initialization that violates the equality constraints (see Figure~\ref{Fig:catenary} (left, black)) the solution evolves and finds a local minimum that satisfies all the constraints. We note that the cost has a plateau at about iteration 1000, which corresponds to a symmetric shape, where the chain lies on top of the round object (see Figure~\ref{Fig:catenary} (left, green crosses)). This corresponds to an unstable equilibrium, since the slightest deviation will cause the chain to slide down either to the left or the right. This is precisely what we observe in our numerical results, leading to the final solution shown in red (square marks).

\begin{figure}
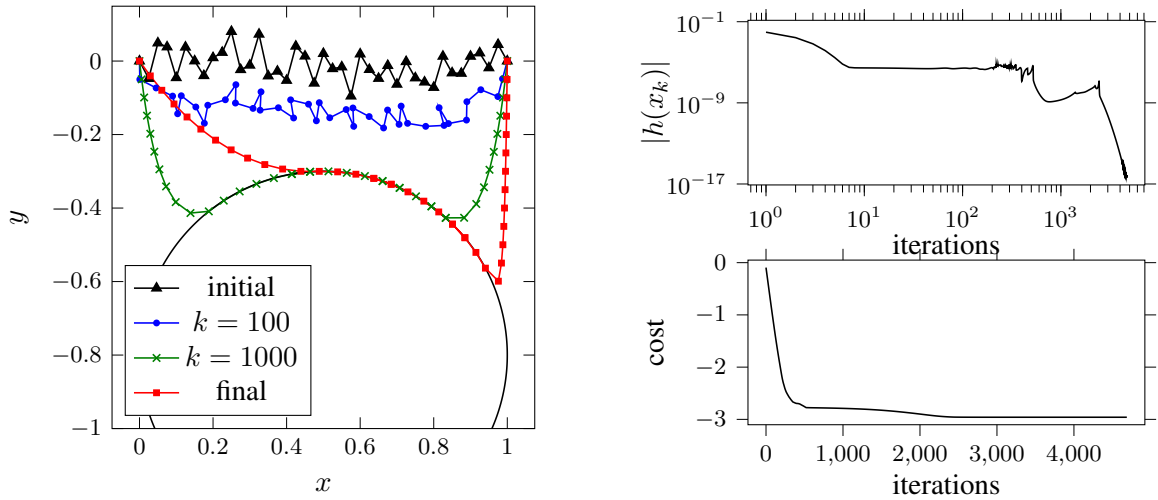


    \begin{minipage}[l]{.45\columnwidth}
    \centering
    
    \setlength{\figurewidth}{1.05\columnwidth}
    \setlength{\figureheight}{1.05\columnwidth}

    \input{media/catenary_1.tikz}
    \end{minipage}\hfill
    \begin{minipage}[r]{.45\columnwidth}

    \setlength{\figurewidth}{\columnwidth}
    \setlength{\figureheight}{.55\columnwidth}

    \centering
    \input{media/catenary_2.tikz}
    \end{minipage}
    \caption{This figure shows the evolution of the solution of the catenary problem (left) as well as the violation of the equality constraints and the evolution of the cost (right). We can clearly see that the symmetric shape (roughly corresponding to the solution at time $k=1000$) is suboptimal, and unstable from a physics perspective. Thus, the chain slides to the right and reaches a lower energy state. (We suspect that the random initialization and the finite precision breaks the symmetry.)}
    \label{Fig:catenary}

\end{figure}

\section{Conclusions}\label{Sec:Conclusion}
We have presented a new class of primal first-order algorithms for smooth constrained optimization. The key feature of these algorithms is that at each iteration, a low-dimensional, local, and convex approximation of the feasible set is constructed and used for computing the next iterate. The local approximation is a natural generalization of the tangent cone (in the sense of Clarke) to include infeasible points. It can be motivated by drawing analogies to non-smooth mechanical systems and can be viewed as a reformulation of constraint optimization on the velocity level. That is, the algorithm imposes a constraint on $x_{k+1}-x_k$ rather than on $x_k$. While in our continuous-time formulation constraints on the position level and the velocity level are equivalent, this is no longer true for the resulting discrete-time algorithms. We found that a formulation of constraints on the velocity level leads to efficient first-order algorithms that avoid projection or optimization over the entire feasible set at each iteration. This simplification requires a more complex theoretical analysis, but, as we have shown, that analysis can be carried out with a blend of ideas from dynamical systems and mathematical optimization. 

We have aimed to highlight and explicate the philosophical and conceptual novelty of our approach to constrained optimization. Many aspects of the general approach deserve a more thorough treatment. For example, we have not discussed existence of solutions to the non-smooth differential equations or the differential inclusions that we have introduced. Similarly, the strong convexity assumptions on the objective function for proving convergence of our discrete algorithm can most likely be relaxed, and the numerical experiments do not include an extensive comparison to different state-of-the-art solvers. We also acknowledge that there are software packages that are tailored to, for example, support vector machines, which would most likely outperform our method. 

There are many opportunities for further research in this vein. In particular, the analogies to non-smooth mechanical systems that are made throughout the article enable extensions to Newton-type methods or accelerated first-order methods. We hope that our perspective helps to trigger further developments at the intersection between non-smooth dynamics, constrained optimization, and machine learning.


\acks{We thank the German Research Foundation and the Branco Weiss Fellowship, administered by ETH Zurich, for the generous support. We also thank the Office of Naval Research under grant number N00014-18-1-2764.}


\appendix
\section{Properties of $d$}\label{App:d}

\hchange{We recall from Section~\ref{Sec:Discrete} that $f_I^*$ denotes the optimal function value that arises when considering modifications of \eqref{eq:fundProb}, where some inequality constraints are removed. We recall the definition of $f_I^*$ by restating \eqref{eq:fseq}}:
\begin{equation*}
f^*_I:=\min_{x\in \mathbb{R}^n} f(x) \quad \text{s.t.} \quad  h(x)=0, \quad g_i(x)\geq 0, \quad i\in I,
\end{equation*}
\hchange{where $I$ is any subset of $\{1,\dots,n_\text{g}\}$. We further note that $f^*_{\{\}} \leq f^*_I \leq f^*$ and that $x_I^*$ denotes any minimizer of \eqref{eq:fseq} with $\lambda_I^*$ the corresponding multipliers that satisfy the Karush-Kuhn-Tucker conditions of \eqref{eq:fseq}.}

\begin{lemma}\label{Lem:discrete2}
\hchange{Let Assumption~\ref{Ass:Conv} (convexity) be satisfied.} For $0\leq \alpha \leq \mu$ and any $x\in \mathbb{R}^n$ the following upper and lower bounds on $d(x)$ hold
\begin{equation*}
f_{I_x}^* - \frac{1}{2\alpha} \frac{L_l}{\alpha} \left( 1 -\frac{\alpha}{L_l} \right) |v(x)|^2 \quad \leq \quad d(x) \quad \leq \quad f_{I_x}^* -\frac{1}{2 \alpha} \left(1-\frac{\alpha}{\mu}\right) |v(x)|^2,
\end{equation*}
where $L_l \geq \bar{L}_l(\lambda_{I_x}^*)$.
\end{lemma}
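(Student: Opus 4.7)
The plan is to prove the two bounds separately, starting from the equivalent form $d(x) = l(x,\lambda(x)) - \frac{1}{2\alpha}|v(x)|^2$, which follows from strong duality (Lemma~\ref{Lemma:SD}) together with the identity $v(x) = -\nabla_x l(x,\lambda(x))$. It will be convenient to introduce $\phi(x,\lambda) := l(x,\lambda) - \frac{1}{2\alpha}|\nabla_x l(x,\lambda)|^2$, so that $d(x) = \max_{\lambda \in D_x} \phi(x,\lambda) = \phi(x,\lambda(x))$.

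For the upper bound, I would exploit the $\mu$-strong convexity of $l(\cdot,\lambda(x))$, which holds because $f$ is $\mu$-strongly convex, the equality parts of $\bar{g}$ are affine, and the inequality parts are concave with non-negative multipliers (since $\lambda(x) \in D_x$). The standard gradient-norm bound for $\mu$-strongly convex functions yields $l(x,\lambda(x)) - \inf_y l(y,\lambda(x)) \leq \frac{1}{2\mu}|v(x)|^2$, while weak Lagrangian duality applied to \eqref{eq:fseq} with $I=I_x$ gives $\inf_y l(y,\lambda(x)) \leq f^*_{I_x}$ (as $\lambda(x) \in D_x$ is dual-feasible for the modified problem). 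Combining these and subtracting $\frac{1}{2\alpha}|v(x)|^2$ yields the upper bound directly.

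For the lower bound, I would plug $\lambda = \lambda^*_{I_x} \in D_x$ into the variational characterization of $d$, yielding $d(x) \geq \phi(x,\lambda^*_{I_x})$. The function $l(\cdot,\lambda^*_{I_x})$ is minimized over $\mathbb{R}^n$ at $x^*_{I_x}$ with value $f^*_{I_x}$ (by KKT for \eqref{eq:fseq} and complementary slackness) and is $L_l$-smooth, so the standard estimate for $L_l$-smooth convex functions yields $l(x,\lambda^*_{I_x}) - f^*_{I_x} \geq \frac{1}{2L_l}|\nabla_x l(x,\lambda^*_{I_x})|^2$, hence $\phi(x,\lambda^*_{I_x}) \geq f^*_{I_x} - \frac{L_l-\alpha}{2\alpha L_l}|\nabla_x l(x,\lambda^*_{I_x})|^2$.

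The main obstacle is that $|\nabla_x l(x,\lambda^*_{I_x})|$ is not a priori controlled by $|v(x)|$, so a naive substitution falls short. To close the gap I would exploit the variational inequality at the concave maximizer $\lambda(x)$: since $\lambda(x)$ maximizes the concave function $\phi(x,\cdot)$ over the convex set $D_x$ and $\lambda^*_{I_x} \in D_x$, one has $\nabla_\lambda \phi(x,\lambda(x))^{\TT} (\lambda^*_{I_x}-\lambda(x)) \leq 0$. Computing $\nabla_\lambda \phi$ explicitly turns this into $-(\lambda^*_{I_x}-\lambda(x))^{\TT} \bar{g}(x) \leq \tfrac{1}{\alpha}v(x)^{\TT} w$, where $w := \nabla\bar{g}(x)(\lambda^*_{I_x}-\lambda(x))$. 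Plugging this inequality into the algebraic expansion of $\phi(x,\lambda^*_{I_x}) - d(x)$ produces the sharper bound $d(x) \geq \phi(x,\lambda^*_{I_x}) + \frac{1}{2\alpha}|w|^2$, and using the identity $\nabla_x l(x,\lambda^*_{I_x}) = -v(x) - w$ reduces the desired inequality, after a short algebraic manipulation, to the non-negativity of $|(L_l-\alpha)v(x) - \alpha w|^2$, a perfect square.
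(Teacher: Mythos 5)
Your proposal is correct. The upper bound is proved exactly as in the paper: strong convexity of $l(\cdot,\lambda(x))$ gives $l(x,\lambda(x))-\inf_y l(y,\lambda(x))\leq \frac{1}{2\mu}|v(x)|^2$, weak duality for \eqref{eq:fseq} gives $\inf_y l(y,\lambda(x))\leq f^*_{I_x}$, and subtracting $\frac{1}{2\alpha}|v(x)|^2$ finishes. For the lower bound, however, you take a genuinely different route. The paper introduces the auxiliary problem in which $\alpha$ is replaced by $L_l$, observes that $v(x)L_l/\alpha\in V_{L_l}(x)$ is a feasible candidate for the rescaled primal, and transfers the resulting inequality to the dual side via complementary slackness before substituting $\lambda^*_{I_x}$. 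You instead stay entirely on the dual side: since $\phi(x,\cdot)$ is an exact quadratic in $\lambda$ with Hessian $-\frac{1}{\alpha}\nabla\bar{g}(x)\T\nabla\bar{g}(x)$, the first-order optimality of $\lambda(x)$ over $D_x$ upgrades the trivial bound $d(x)\geq\phi(x,\lambda^*_{I_x})$ to $d(x)\geq\phi(x,\lambda^*_{I_x})+\frac{1}{2\alpha}|w|^2$ with $w=\nabla\bar{g}(x)(\lambda^*_{I_x}-\lambda(x))$, and combining this with the $L_l$-smoothness estimate $l(x,\lambda^*_{I_x})-f^*_{I_x}\geq\frac{1}{2L_l}|v(x)+w|^2$ reduces the claim to $(L_l-\alpha)^2|v|^2-2\alpha(L_l-\alpha)v\T w+\alpha^2|w|^2\geq 0$, which is indeed the perfect square $|(L_l-\alpha)v(x)-\alpha w|^2$ (I verified the coefficient matching after clearing the factor $2\alpha^2L_l$). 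Both arguments rest on the same two external facts ($\lambda^*_{I_x}\in D_x$ and $\inf_z l(z,\lambda^*_{I_x})=f^*_{I_x}$); yours avoids introducing the modified problem $V_{L_l}(x)$ and its multipliers $\lambda_\text{m}$, at the price of computing $\nabla_\lambda\phi$ explicitly, and it yields the intermediate inequality $d(x)\geq\phi(x,\lambda^*_{I_x})+\frac{1}{2\alpha}|\nabla\bar{g}(x)(\lambda^*_{I_x}-\lambda(x))|^2$, which quantifies the gap in terms of the multiplier mismatch and could be useful elsewhere.
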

\begin{proof}
We start by deriving the upper bound. We conclude from the strong convexity of $l$ that
\begin{align}
f^* \geq f^*_{I_x} \geq \inf_{z\in \mathbb{R}^n} l(z,\lambda) &\geq l(x,\lambda) - \frac{1}{2\mu} |\nabla_x l(x,\lambda)|^2, \quad \forall \lambda \in D_x. \label{eq:importantUBd2}
\end{align}
By rearranging terms we obtain
\begin{align*}
f_{I_x}^* - \frac{1}{2 \alpha} \left(1-\frac{\alpha}{\mu}\right) |\nabla_x l(x,\lambda)|^2 &\geq l(x,\lambda) - \frac{1}{2 \alpha} |\nabla_x l(x,\lambda)|^2, \quad \forall \lambda \in D_x,\nonumber
\end{align*}
which yields the desired upper bound for $\lambda=\lambda(x)$. 

In order to obtain the lower bound, we first note that the smoothness of $l(\cdot, \lambda^*_{I_x})$ (where $\lambda_{I_x}^*$ is fixed) implies
\begin{equation}
f^*_{I_x}=\inf_{z \in \mathbb{R}^n} l(z,\lambda^*_{I_x})\leq l(x,\lambda^*_{I_x})-\frac{1}{2L_l} |\nabla_x l(x,\lambda_{I_x}^*)|^2. \label{eq:prooftmp3331}
\end{equation}
We further consider the modified primal and dual problems (where $\alpha$ is replaced by $L_l$):
\begin{equation*}
v_\text{m}(x) = \argmin_{v\in V_{L_l}(x)} \frac{1}{2} |v+\nabla f(x)|^2, \quad \lambda_\text{m}(x) \in  \argmax_{\lambda \in D_x} l(x,\lambda) - \frac{1}{2L_l} |\nabla_x l(x,\lambda)|^2,
\end{equation*}
and note that $v(x) L_l/\alpha \in V_{L_l}(x)$, hence $v(x) L_l/\alpha$ is a feasible candidate for minimization over $V_{L_l}(x)$. This means that
\begin{align*}
\frac{1}{2} |v_m(x)|^2 + v_m(x)\T \nabla f(x) \leq \frac{1}{2} \frac{L_l^2}{\alpha^2} |v(x)|^2 + \frac{L_l}{\alpha} v(x)\T \nabla f(x).
\end{align*}
Complementary slackness implies that $v(x)\T\nabla f(x)=-|v(x)|^2 - \alpha \bar{g}(x)\T \lambda(x)$ and similarly $v_\text{m}(x)\T \nabla f(x)=-|v_\text{m}(x)|^2 - L_l \bar{g}(x)\T \lambda_\text{m}(x)$, and yields therefore
\begin{align*}
-\frac{1}{2} |v_\text{m}(x)|^2 - L_l \bar{g}(x)\T \lambda_\text{m}(x)\leq \frac{1}{2} \frac{L_l}{\alpha} \left(\frac{L_l}{\alpha} -2\right) |v(x)|^2 - L_l \bar{g}(x)\T \lambda(x).
\end{align*}
Dividing by $L_l$ and adding $f(x)$ on both sides implies that
\begin{equation*}
\max_{\lambda \in D_x} l(x,\lambda) - \frac{1}{2L_l} |\nabla_x l(x,\lambda)|^2 \leq d(x) + \frac{1}{2\alpha} \left( \frac{L_l}{\alpha} -1 \right) |v(x)|^2.
\end{equation*}
The left-hand side includes a maximum over $\lambda$, which means that for $\lambda_{I_x}^*\in D_x$, we have:
\begin{equation*}
l(x,\lambda_{I_x}^*) - \frac{1}{2L_l} |\nabla_x l(x,\lambda_{I_x}^*)|^2 \leq d(x) +\frac{1}{2\alpha} \left( \frac{L_l}{\alpha} -1 \right) |v(x)|^2.
\end{equation*}
Combining the previous inequality with \eqref{eq:prooftmp3331} yields the desired lower bound.
\end{proof}

\begin{lemma}\label{Lemma:propD}
\hchange{Let Assumption~\ref{Ass:Conv} (convexity) be satisfied. For $0 \leq \alpha<\mu$ and any $x\in \mathbb{R}^n$ the following holds
\begin{equation*}
   d(x) \leq  f_{I_x}^*-\frac{\mu-\alpha}{2} |x-x_{I_x}^*|^2, \qquad d(x) \leq f^* - \frac{\mu-\alpha}{2} |x-x^*|^2.
\end{equation*}
Moreover, $d(x)=f^*$ if and only if $x=x^*$.}
\end{lemma}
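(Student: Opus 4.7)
Fix any $\lambda \in D_x$. Because $C$ is convex and $\lambda_{n_\text{h}+i} \geq 0$ for $i \in I_x$ (with $\lambda_{n_\text{h}+i}=0$ otherwise), $l(\cdot,\lambda)$ is $\mu$-strongly convex. Applying strong convexity between $x$ and $x_{I_x}^*$ and rearranging yields the upper bound
\begin{equation*}
l(x,\lambda) \leq l(x_{I_x}^*,\lambda) + \nabla_x l(x,\lambda)^\TT(x-x_{I_x}^*) - \frac{\mu}{2}|x-x_{I_x}^*|^2.
\end{equation*}
Subtracting $\frac{1}{2\alpha}|\nabla_x l(x,\lambda)|^2$ from both sides and bounding the resulting cross term by $\frac{\alpha}{2}|x-x_{I_x}^*|^2$ via completing the square produces
\begin{equation*}
l(x,\lambda) - \frac{1}{2\alpha}|\nabla_x l(x,\lambda)|^2 \leq l(x_{I_x}^*,\lambda) - \frac{\mu-\alpha}{2}|x-x_{I_x}^*|^2.
\end{equation*}
The key observation is that $l(x_{I_x}^*,\lambda) \leq f_{I_x}^*$ for every $\lambda \in D_x$: indeed $h(x_{I_x}^*)=0$ and $g_i(x_{I_x}^*) \geq 0$ for $i\in I_x$, while by definition of $D_x$ the multipliers $\lambda_{n_\text{h}+i}$ are non-negative for $i\in I_x$ and vanish otherwise, so $\lambda\T \bar{g}(x_{I_x}^*) \geq 0$. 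Taking the maximum of the previous display over $\lambda\in D_x$ then gives $d(x) \leq f_{I_x}^* - \frac{\mu-\alpha}{2}|x-x_{I_x}^*|^2$.

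The second bound is obtained by an entirely analogous argument in which $x_{I_x}^*$ is replaced by $x^*$. The only point to verify is again $l(x^*,\lambda) \leq f^*$ for any $\lambda\in D_x$, which holds because $x^*$ is feasible for \eqref{eq:fundProb} (so $h(x^*)=0$ and $g_i(x^*)\geq 0$ for all $i$) and the sign pattern of $\lambda\in D_x$ forces $\lambda\T \bar{g}(x^*)\geq 0$.

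For the final equivalence, the ``only if'' direction is immediate from the second inequality: $d(x)=f^*$ forces $|x-x^*|^2 \leq 0$ because $\mu>\alpha$. For the ``if'' direction, evaluate at $x=x^*$. Since $x^*$ is a KKT point of \eqref{eq:fundProb} and satisfies $\nabla f(x^*) = \nabla \bar{g}(x^*)\lambda^*$ with $\lambda^*\in D_{x^*}$, one has $v(x^*)=0$. Moreover, the KKT multipliers $\lambda^*$ automatically satisfy the KKT conditions of \eqref{eq:fseq} for $I=I_{x^*}$ (the inactive constraints are dropped but their multipliers are already zero), and by strong convexity of $f$ these conditions are sufficient for global optimality of the relaxed problem, hence $f_{I_{x^*}}^* = f^*$. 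Applying the upper bound from Lemma~\ref{Lem:discrete2} together with the lower bound $d(x^*) \geq f_{I_{x^*}}^* - \frac{L_l}{2\alpha^2}(1-\alpha/L_l)|v(x^*)|^2 = f^*$ concludes $d(x^*)=f^*$.

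The only subtlety in the plan is the verification that $f_{I_{x^*}}^* = f^*$ used in the ``if'' direction; this is where strong convexity of $f$ (Assumption~\ref{Ass:Conv}) is crucial to promote the KKT conditions to sufficiency. Everything else is a direct consequence of strong convexity of the Lagrangian combined with the sign structure of the multipliers in $D_x$.
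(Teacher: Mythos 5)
Your proof is correct, but it runs on the dual side of the computation that the paper performs on the primal side. The paper's proof first uses strong duality to rewrite $d(x)$ as $f(x) - \frac{1}{2\alpha}|\nabla f(x)|^2 + \frac{1}{\alpha}\min_{v\in V_\alpha(x)}\frac{1}{2}|v+\nabla f(x)|^2$, inserts the feasible candidate $v=\alpha(x_{I_x}^*-x)\in V_\alpha(x)$ (feasibility of this candidate is exactly where convexity of $C$ enters), and then applies $\mu$-strong convexity of $f$ alone. You instead stay with the dual expression $d(x)=\max_{\lambda\in D_x}\, l(x,\lambda)-\frac{1}{2\alpha}|\nabla_x l(x,\lambda)|^2$, use $\mu$-strong convexity of $l(\cdot,\lambda)$ uniformly over $\lambda\in D_x$ (which is where convexity of $C$ enters for you, via concavity of $g$, linearity of $h$, and non-negativity of the inequality multipliers), complete the square to absorb the $-\frac{1}{2\alpha}|\nabla_x l|^2$ term, and bound $l(x_{I_x}^*,\lambda)\leq f_{I_x}^*$ through the sign structure of $\lambda\in D_x$. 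The two computations are conjugate to one another---your completing-the-square with center $\alpha(x-x_{I_x}^*)$ is precisely the paper's choice of test velocity---so neither buys extra generality; the paper's version is slightly shorter because it only needs strong convexity of $f$, while yours is self-contained on the dual side and never returns to the primal representation. For the final equivalence you supply strictly more detail than the paper, which merely asserts it: your verification of $d(x^*)=f^*$ via $v(x^*)=0$, $f^*_{I_{x^*}}=f^*$, and the lower bound of Lemma~\ref{Lem:discrete2} is sound, though one can obtain $d(x^*)\geq f^*$ even more directly by evaluating the dual objective at the KKT multiplier $\lambda^*\in D_{x^*}$, where stationarity gives $\nabla_x l(x^*,\lambda^*)=0$ and complementary slackness gives $l(x^*,\lambda^*)=f^*$.
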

\begin{proof}
\hchange{We conclude from strong duality that
\begin{equation*}
    d(x) = f(x) - \frac{1}{2\alpha} |\nabla f(x)|^2 + \frac{1}{\alpha} \min_{v\in V_\alpha(x)} \frac{1}{2} |v+\nabla f(x)|^2.
\end{equation*}
Furthermore, $\alpha (x_{I_x}^*-x) \in V_\alpha(x)$ due to the fact that $C$ is convex. Hence, $\alpha (x_{I_x}^*-x)$ is a feasible candidate in the above minimization, which yields
\begin{equation*}
    d(x)\leq f(x)+\frac{\alpha}{2} |x_{I_x}^*-x|^2 + \nabla f(x)\T (x_{I_x}^*-x)\leq f_{I_x}^*+\frac{\alpha - \mu}{2} |x_{I_x}^*-x|^2,
\end{equation*}
where the strong convexity of $f$ has been used in the second step. Rearranging terms yields the first inequality. The second inequality follows from the same reasoning (we simply replace $x_{I_x}^*$ with $x^*$). This also implies $d(x)=f^*$ if and only if $x=x^*$.}
\end{proof}

\section{Barbalat's lemma}\label{App:Barbalat}
\begin{lemma}\label{Lemma:cont2}
(Variant of Barbalat's lemma) Let $\xi: \mathbb{R}_{\geq 0} \rightarrow\mathbb{R}$ be piecewise continuous, such that \begin{equation*}
-\infty < \int_{0}^{\infty} \xi(\tau) \diff \tau , \quad \xi(t)^+\geq \xi(t)^-, \quad \xi(t_2)-  \xi(t_1) \geq - \bar{r} (t_2-t_1), 
\end{equation*} 
for any $t_2\geq t_1>0$ such that $\xi$ is continuous on $(t_1,t_2)$ and any $t\geq 0$.  If $\bar{\xi}: \mathbb{R}_{\geq 0} \rightarrow \mathbb{R}_{\geq 0}$, with $\bar{\xi}(x):=\max\{\xi(x),0\}$, is integrable and such that $\lim_{t\rightarrow \infty} \bar{\xi}(t) = 0$, then $\lim_{t\rightarrow \infty} \xi(t)=0$ holds.
\end{lemma}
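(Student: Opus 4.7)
The plan is a proof by contradiction combined with an "interval accumulation" argument. The upper bound on $\xi$ is essentially free: since $\xi(t)\leq \bar{\xi}(t)$ and $\bar{\xi}(t)\to 0$, we immediately get $\limsup_{t\to\infty}\xi(t)\leq 0$. So the entire content of the lemma is $\liminf_{t\to\infty}\xi(t)\geq 0$, which I would prove by assuming the contrary and deriving divergence of $\int_0^\infty \max\{-\xi(\tau),0\}\,\diff\tau$, contradicting the hypotheses.

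More precisely, the first step is to observe that the finiteness assumptions on $\int\xi$ (bounded below) and $\int\bar{\xi}$ (integrable) together give $\int_0^\infty |\min\{\xi(\tau),0\}|\,\diff\tau<\infty$, simply by writing $\min\{\xi,0\}=\xi-\bar{\xi}$. Then, suppose for contradiction that $\liminf_{t\to\infty}\xi(t)<0$, so there exist $\epsilon>0$ and $t_n\to\infty$ with $\xi(t_n)\leq -\epsilon$. The crux is the following \emph{backward propagation} claim: for $\delta:=\epsilon/(2\bar{r})$, we have $\xi(t)\leq -\epsilon/2$ for every $t\in[t_n-\delta,t_n]$. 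This is where both hypotheses on the regularity of $\xi$ are used jointly: on any maximal continuous subinterval $(s',s)\subseteq(t_n-\delta,t_n]$, condition (3) propagates an upper bound backwards via $\xi(s')^+\leq\xi(s)^-+\bar{r}(s-s')$, and condition (2) ($\xi(s)^+\geq\xi(s)^-$ at discontinuities) guarantees that crossing a jump going backward can only decrease $\xi$. Telescoping across finitely many jumps gives $\xi(t)\leq\xi(t_n)+\bar{r}(t_n-t)\leq -\epsilon/2$ uniformly on $[t_n-\delta,t_n]$.

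The last step is the accumulation: pass to a subsequence (still called $t_n$) with $t_{n+1}>t_n+\delta$, so that the intervals $[t_n-\delta,t_n]$ are pairwise disjoint. On each such interval, $|\min\{\xi,0\}|\geq \epsilon/2$, so
\begin{equation*}
\int_0^\infty |\min\{\xi(\tau),0\}|\,\diff\tau \;\geq\; \sum_{n}\int_{t_n-\delta}^{t_n}\frac{\epsilon}{2}\,\diff\tau \;=\;\sum_{n}\frac{\epsilon\delta}{2}\;=\;+\infty,
\end{equation*}
contradicting the first step. Combined with $\limsup\xi\leq 0$, we conclude $\lim_{t\to\infty}\xi(t)=0$.

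The main obstacle, and the only genuinely non-routine part, is the backward propagation claim. One must be careful that the jumps of $\xi$ in a finite interval, though possibly many, can still be ordered from the largest jump time and handled by induction; piecewise continuity guarantees only finitely many jumps in any compact interval, which is exactly what makes the telescoping argument work. Everything else reduces to routine manipulations of the integrability conditions.
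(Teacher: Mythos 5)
Your proof is correct and follows essentially the same route as the paper's: a contradiction argument in which the jump condition $\xi(t)^+\geq\xi(t)^-$ together with the slope bound propagates the value $\xi(t_n)\leq-\epsilon$ backward over an interval of length $\epsilon/(2\bar{r})$, and disjointness of these intervals forces the negative part of $\xi$ to have infinite integral, contradicting the integrability hypotheses. Your repackaging via $\int_0^\infty|\min\{\xi(\tau),0\}|\,\diff\tau<\infty$ is a cosmetic variant of the paper's direct splitting of $\int_0^\infty\xi$, and your explicit induction over the finitely many jumps in a compact interval just spells out what the paper asserts in one line.
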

\begin{proof}
The proof follows a standard argument, which is also used for proving Barbalat's lemma~\citep[see, e.g.,][p.~204]{Sastry}. \hchange{We start by assuming that $\xi(t)$ does not converge to zero and show that this leads to a contradiction.} This means that there exists an $\epsilon>0$ and a sequence $t_k\geq 0$, such that $\xi(t_k)< -\epsilon$ for all $k>0$ (taking into account that $\lim_{t\rightarrow \infty} \bar{\xi}(t) =0$). However, since $\xi(t)^+ \geq \xi(t)^-$ at every $t$ where $\xi$ is discontinuous, we conclude that $\xi(t) \leq \xi(t_1) + \bar{r} (t_1-t)$ for all $t\leq t_1$, where $t_1>0$ is arbitrary (looking backwards in time, the function increases by a slope of at most $\bar{r}$). For each $t_k$, we thus conclude $\xi(t) < - \epsilon/2$ as long as $t\in (t_k-\epsilon/(2 \bar{r}), t_k)$. This means that for any subsequence $t_{kj}$, $j=1,2,\dots$ such that $t_{k(j+1)}>t_{kj}+\epsilon/(2\bar{r})$,
\begin{equation*}
\int_{0}^{\infty} \xi(\tau) \diff \tau = \sum_{j=1}^{\infty} \int_{t_{k(j-1)}}^{t_{kj}-\epsilon/(2\bar{r})} \xi(\tau) \diff \tau + \int_{t_{kj}-\epsilon/(2\bar{r})}^{t_{kj}} \xi(\tau) \diff \tau \leq   \int_{0}^{\infty} \bar{\xi}(\tau) \diff \tau - \sum_{j=1}^{\infty} \epsilon^2/(4\bar{r}),
\end{equation*}
where $t_{k0}$ is defined as $t_{k0}=0$ for notational convenience. The right-hand side is unbounded below leading to the desired contradiction.
\end{proof}

\section{Nonlinear constraints}\label{App:ProofPropCG}
When estimating the constant $L_l$, a bound on $\lambda$ is often useful. The following proposition, which can be generalized to multiple constraints by a similar argument, establishes such a bound.

\begin{proposition}\label{Prop:BoundCg}
Let $g: \mathbb{R}^n \rightarrow \mathbb{R}$ be a scalar $L_\text{g}$-smooth and $\mu_\text{g}$-strongly concave function. Then, in the absence of any other constraints, the corresponding multiplier $\lambda>0$ is bounded by
\begin{equation*}
    \lambda \leq \frac{1}{\mu_\text{g}} \left( \alpha + L (1+ |x_\text{f}-x_\text{g}| \sqrt{L_\text{g}/(2g(x_\text{g}))})\right),
\end{equation*}
where $x_\text{f}$ is the (unconstrained) minimizer of $f$, $x_\text{g}$ the (unconstrained) maximizer of $g$, and $L$ the smoothness constant of $f$.
\end{proposition}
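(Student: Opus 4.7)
The plan is to start from the closed-form expression for $\lambda(x)$ given earlier in the introduction for a single scalar inequality: when $g(x)\leq 0$ and $\nabla g(x)^\TT \nabla f(x) - \alpha g(x) \geq 0$, one has
\begin{equation*}
\lambda(x) \;=\; \frac{\nabla g(x)^\TT \nabla f(x)}{|\nabla g(x)|^2} \;+\; \frac{-\alpha g(x)}{|\nabla g(x)|^2},
\end{equation*}
and the other case gives $\lambda=0$, which trivially satisfies the bound. I would then bound the two summands separately. Throughout, the key tools are the strong-concavity/smoothness estimates for $g$ (i.e.\ strong-convexity/smoothness of $-g$) together with $L$-smoothness of $f$ and the fact that $\nabla f(x_f)=0$, $\nabla g(x_g)=0$.

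For the second term, since the problem is feasible one has $g(x_g)\geq 0$, so $-g(x)\leq g(x_g)-g(x)$ whenever $g(x)\leq 0$. Combining this with the standard Polyak-Łojasiewicz inequality for the $\mu_g$-strongly convex function $-g$,
\begin{equation*}
|\nabla g(x)|^2 \;\geq\; 2\mu_g(g(x_g)-g(x)),
\end{equation*}
yields $-\alpha g(x)/|\nabla g(x)|^2 \leq \alpha/(2\mu_g) \leq \alpha/\mu_g$, accounting for the first term inside the parentheses of the claimed bound.

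For the first term I would split $\nabla f(x) = (\nabla f(x)-\nabla f(x_g)) + \nabla f(x_g)$ and apply Cauchy-Schwarz. The $L$-smoothness of $f$ together with $\nabla f(x_f)=0$ gives $|\nabla f(x)-\nabla f(x_g)|\leq L|x-x_g|$ and $|\nabla f(x_g)|\leq L|x_g-x_f|$, so the task reduces to bounding $|x-x_g|/|\nabla g(x)|$ and $1/|\nabla g(x)|$. The first ratio is controlled by the strong-convexity inequality $|\nabla g(x)|\geq \mu_g|x-x_g|$, yielding at most $1/\mu_g$ and contributing the $L/\mu_g$ term. The second, $x$-independent, lower bound on $|\nabla g(x)|$ is where the factor $\sqrt{L_g/(2g(x_g))}$ enters: the active-constraint condition $g(x)\leq 0$ combined with $L_g$-smoothness of $-g$ expanded around $x_g$,
\begin{equation*}
0 \;\geq\; g(x) \;\geq\; g(x_g) - \tfrac{L_g}{2}|x-x_g|^2,
\end{equation*}
forces $|x-x_g|\geq \sqrt{2g(x_g)/L_g}$, and hence $|\nabla g(x)|\geq \mu_g\sqrt{2g(x_g)/L_g}$. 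Adding the three contributions produces exactly the stated bound.

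The only nontrivial step is this last one: recognizing that active nonlinear constraints force $x$ to lie outside a ball of radius $\sqrt{2g(x_g)/L_g}$ around $x_g$, which is what keeps $|\nabla g(x)|$ bounded away from zero uniformly in $x$. Everything else is a routine assembly of strong-convexity/smoothness estimates, and no delicate constant tracking is required because the bound is stated with slack (for instance the $\alpha$ term comes out as $\alpha/(2\mu_g)$, not $\alpha/\mu_g$).
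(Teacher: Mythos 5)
Your proof is correct, and it reaches the stated bound by a genuinely different route than the paper, although the two arguments converge on the same key estimates. The paper never writes down the closed-form expression for $\lambda(x)$; instead it starts from the primal optimality of $v(x)$ in \eqref{eq:velLeveltmp2}, compares against the feasible candidate $v_\text{f}=\alpha(x_\text{g}-x)\in V_\alpha(x)$ (feasibility following from concavity of $g$), and then uses $|\nabla g(x)|\geq\mu_\text{g}|x-x_\text{g}|$ to obtain $\mu_\text{g}|x-x_\text{g}|\,\lambda\leq\alpha|x-x_\text{g}|+L|x-x_\text{f}|$, after which $|x-x_\text{f}|\leq|x-x_\text{g}|+|x_\text{g}-x_\text{f}|$ and the same ball-exclusion argument ($0\geq g(x)\geq g(x_\text{g})-\tfrac{L_\text{g}}{2}|x-x_\text{g}|^2$, hence $|x-x_\text{g}|\geq\sqrt{2g(x_\text{g})/L_\text{g}}$) finish the proof. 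You instead manipulate the explicit dual solution $\lambda(x)=(\nabla g(x)\T\nabla f(x)-\alpha g(x))/|\nabla g(x)|^2$ directly, handling the $\alpha$-term with the Polyak--\L{}ojasiewicz inequality for $-g$ (together with $g(x_\text{g})\geq 0$) and the gradient term with a triangle inequality on gradients rather than on points. Your version is marginally tighter (you get $\alpha/(2\mu_\text{g})$ where the paper gets $\alpha/\mu_\text{g}$), and it is self-contained modulo the closed-form for $\lambda(x)$ stated in Section~\ref{Sec:Summary}. The paper's feasible-candidate argument has the advantage that it does not rely on an explicit dual formula, which is why it generalizes more readily to multiple constraints, as the paper remarks. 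Both proofs share the one genuinely nontrivial observation, which you correctly identify: an active smooth concave constraint forces $x$ outside the ball of radius $\sqrt{2g(x_\text{g})/L_\text{g}}$ around $x_\text{g}$, which keeps $|\nabla g(x)|$ (equivalently $|x-x_\text{g}|$) bounded away from zero uniformly over the active region.
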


\begin{proof}
It follows from \eqref{eq:velLeveltmp2} that
\begin{equation*}
    \frac{1}{2} |W(x) \lambda|^2 = \frac{1}{2} |v(x) + \nabla f(x)|^2 \leq \frac{1}{2} |v_\text{f} +\nabla f(x)|^2,
\end{equation*}
for any $v_\text{f} \in V_\alpha(x)$. In particular, we can set $v_\text{f}=\alpha (x_\text{g}-x)$, which satisfies $v_\text{f} \in V_\alpha(x)$, due to the concavity of $g$. Moreover, from $W(x)=\nabla g(x)$ and the strong concavity of $g$ we conclude
\begin{equation*}
    \mu_\text{g} |x-x_\text{g}| \lambda \leq |\alpha (x_\text{g}-x) + \nabla f(x)| \leq \alpha |x-x_\text{g}| + L |x-x_\text{f}|,
\end{equation*}
where $\lambda>0$ by definition of $\lambda$.
This yields the following bound on the dual variable
\begin{equation*}
    \lambda \leq \sup_{g(x)\leq 0} \frac{1}{\mu_\text{g}} \left( \alpha + \frac{ L |x-x_\text{f}|}{|x-x_\text{g}|} \right),
\end{equation*}
which can be further simplified to
\begin{equation*}
\lambda \leq \frac{1}{\mu_\text{g}} \left( \alpha + L + L |x_\text{g}-x_\text{f}| \sup_{g(x)\leq 0} \frac{1}{|x-x_\text{g}|} \right).
\end{equation*}
Due to the strong concavity of $g$, it follows that $g(x) \geq g(x_\text{g}) - L_\text{g} |x-x_\text{g}|^2/2$ for all $x\in \mathbb{R}^n$. As a consequence, $L_\text{g} |x-x_\text{g}|^2/2 \geq g(x_\text{g})$, for all $x\in \mathbb{R}^n$ such that $g(x)\leq 0$, which means that the last supremum is bounded by $\sqrt{L_\text{g}/(2 g(x_\text{g}))}$.
\end{proof}

\section{Proof of Proposition~\ref{Prop:ConvergenceLambda}}\label{App:ProofLambda}
\begin{proof}
The proof follows the presentation of \citet[p.~400]{Cottle}. In order to simplify the notation we define $G:=W_k\T W_k$, $q:=-W_k\T \nabla f(x_k) + \alpha \bar{g}(x_k)$, $B:=U\T+\omega^{-1} D$, $C:=U + (1-\omega^{-1})D$, and omit the subscript $k$. We can therefore express \eqref{eq:statdual} concisely as
\begin{equation*}
    G \lambda + q + \partial \psi_{D_{x}}(\lambda) \ni 0.
\end{equation*}
Furthermore, by virtue of the conjugate subgradient theorem, \eqref{eq:statdual2} is equivalent to
\begin{equation}
    \lambda^{j+1} \in D_{x}, \quad -B \lambda^{j+1} - C \lambda^j - q \in D_{x}^*, \quad {\lambda^{j+1}}\T(-B \lambda^{j+1} - C \lambda^j - q)=0, \label{eq:prooftmp331}
\end{equation}
where $D_{x}^*:=\{0\}^{n_\text{h}} \times \mathbb{R}_{\leq 0}^{|I_{x}|}$ is the polar cone of $D_{x}$. We further introduce the function $\tilde{d}: D_{x} \rightarrow \mathbb{R}$, $\tilde{d}(\lambda)=\lambda\T G \lambda/2 + \lambda\T q$. Due to the fact that $G$ is positive semi-definite, $\tilde{d}$ is convex and can be shown to be bounded below for $\lambda \in D_x$. We further have that
\begin{equation*}
    \tilde{d}(\lambda^{j})-\tilde{d}(\lambda^{j+1}) = (\lambda^{j}-\lambda^{j+1})\T (q+G \lambda^{j+1}) + \frac{1}{2} (\lambda^{j}-\lambda^{j+1})\T G (\lambda^{j}-\lambda^{j+1}).
\end{equation*}
As a consequence of \eqref{eq:prooftmp331} and some elementary manipulations, the decrease in $\tilde{d}$ can be expressed as
\begin{align*}
    \tilde{d}(\lambda^{j})-\tilde{d}(\lambda^{j+1}) &= {\lambda^j}\T (q+B\lambda^{j+1}+C\lambda^j) + \frac{1}{2} (\lambda^{j}-\lambda^{j+1})\T (B-C) (\lambda^{j}-\lambda^{j+1})\\
    &\geq \frac{1}{2} (\lambda^{j}-\lambda^{j+1})\T (B-C) (\lambda^{j}-\lambda^{j+1}).
\end{align*}
For the last inequality we have used $-B \lambda^{j+1} -C \lambda^j -q \in D_{x}^*$ and $\lambda^j\in D_{x}$, which ensures that $ (q+B\lambda^{j+1}+C\lambda^j)\T\lambda^j\geq 0$. The symmetric part of $B-C$ is given by $(2\omega^{-1} -1) D$, which is guaranteed to be positive definite for $\omega \in (0,2)$ (the elements of $D$ are given by $|\nabla g_i(x)|^2>0$). This concludes that $\tilde{d}(\lambda^{j})$ is a monotonically decreasing sequence, which therefore converges. Thus, the above inequality implies, in the limit as $j\rightarrow \infty$,
\begin{equation*}
    0=\lim_{j\rightarrow \infty} \frac{1}{2} (\lambda^{j}-\lambda^{j+1})\T (B-C) (\lambda^j-\lambda^{j+1}),
\end{equation*}
which, due to the positive definiteness of the symmetric part of $B-C$, implies that $\lambda^j$ converges. Moreover, $\lim_{j\rightarrow \infty} \lambda^j$ satisfies \eqref{eq:statdual} by construction.
\end{proof}

\section{Asymptotic rate of convergence}\label{App:AsymptoticRate}
\begin{proposition}\label{Prop:AsymptoticRate}
\hchange{Let Assumption~\ref{Ass:Conv} (convexity) be satisfied and let $T\leq 2/(L_\text{l} + \mu)$, $\alpha < \mu$, where $L_\text{l}\geq \bar{L}_\text{l}(\lambda(x))$ for all $x\in \mathbb{R}^n$. Then, for every $x_0\in \mathbb{R}^n$ there exists constants $N>0$ and $c_3>0$, such that}
\begin{equation}
\frac{1}{2} |v(x_{k+1})|^2 \leq (1-2\mu T (1-\mu T/2)) \frac{1}{2} |v(x_k)|^2 + c_3 (1-\alpha T)^{k-N}, \quad \forall k\geq N. \label{eq:lptmp1}
\end{equation}
\hchange{In particular, for $T=1/L_\text{l}$, $\alpha=\mu/2$, there exists a constant $c_4$ such that }
\begin{equation}
    |x_k-x^*|^2 \leq c_4 (1-\mu/(2 L_\text{l}))^{k}, \quad \forall k\geq N. \label{eq:lptmp2}
\end{equation}
\end{proposition}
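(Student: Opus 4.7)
The plan is to show that once the active set $I_{x_k}$ stabilizes to $I_{x^*}$, the algorithm behaves asymptotically like gradient descent on the Lagrangian $l(\cdot,\lambda^*)$ — which is $\mu$-strongly convex and $L_l$-smooth — plus a geometrically decaying perturbation arising from the drift of the multipliers $\lambda_k\to\lambda^*$. First I would argue that there exists $N$ large enough with $I_{x_k}=I_{x^*}$ for every $k\geq N$. Convergence $x_k\to x^*$ (Claim~\ref{Claim:disLast}) forces any constraint strictly inactive at $x^*$ to remain strictly inactive for large $k$. For constraints $i\in I_{x^*}$ with $\lambda_i^*>0$, Lemma~\ref{Lem:discrete} applied along the tail of the trajectory, together with continuity of $\lambda(\cdot)$ at $x^*$, keeps them active. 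Under strict complementarity this covers all constraints in $I_{x^*}$; the general case is the main obstacle and is discussed below.

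Next, with $M_k:=(\nabla h(x_k),\nabla g_i(x_k))_{i\in I_{x^*}}$, Taylor's theorem and $v_k=-\nabla_x l(x_k,\lambda_k)$ yield
\begin{equation*}
v_{k+1}=-\nabla_x l(x_{k+1},\lambda_{k+1})=(I-TH_k)v_k+M_{k+1}(\lambda_{k+1}-\lambda_k),
\end{equation*}
where $H_k:=\int_0^1\nabla_x^2 l(x_k+sTv_k,\lambda_k)\,ds$ has eigenvalues in $[\mu,L_l]$ by the assumed bound $L_l\geq \bar L_l(\lambda(x))$ and the $\mu$-strong convexity of $f$. For $T\leq 2/(L_l+\mu)$, the operator norm satisfies $\|I-TH_k\|^2=(1-T\mu)^2=1-2\mu T(1-\mu T/2)$, which reproduces exactly the contraction factor in \eqref{eq:lptmp1}. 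The perturbation $M_{k+1}(\lambda_{k+1}-\lambda_k)$ is then controlled through the discrete analogue of the continuous identity $\dot{\bar g}_A(x(t))+\alpha\bar g_A(x(t))=0$: a first-order Taylor expansion of $\bar g_A$ along the update direction gives $\bar g_A(x_{k+1})=(1-\alpha T)\bar g_A(x_k)+O(T^2|v_k|^2)$. Substituting into the explicit stationarity identity $M_k^\top M_k\lambda_k=M_k^\top\nabla f(x_k)-\alpha\bar g_A(x_k)$ and using smoothness of the data yields $|\lambda_{k+1}-\lambda_k|\leq C_1(1-\alpha T)^{k-N}+C_2|v_k|^2$. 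The recursion \eqref{eq:lptmp1} then follows from expanding $|v_{k+1}|^2$ with the decomposition above, bounding the cross term $2v_k^\top(I-TH_k)^\top M_{k+1}(\lambda_{k+1}-\lambda_k)$ via Cauchy-Schwarz and Young's inequality, and absorbing the $O(|v_k|^2)$ piece of the perturbation into $c_3(1-\alpha T)^{k-N}$ by taking $N$ sufficiently large (bootstrapping off the fact that $|v_k|\to 0$, known from Claim~\ref{Claim:convV}).

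To obtain \eqref{eq:lptmp2}, substitute $T=1/L_l$ and $\alpha=\mu/2$ into \eqref{eq:lptmp1}: the contraction becomes $(1-\mu/L_l)^2$ and the perturbation rate becomes $(1-\mu/(2L_l))^{k-N}$. Iterating the recursion and summing a convergent geometric series, using the elementary bound $(1-\mu/L_l)^2\leq 1-\mu/(2L_l)$ (valid because $\mu\leq L_l$), gives $|v_k|^2\leq c(1-\mu/(2L_l))^{k-N}$. Finally, combining Lemma~\ref{Lem:discrete2} and Lemma~\ref{Lemma:propD} of Appendix~\ref{App:d} once $I_{x_k}=I_{x^*}$ delivers $|x_k-x^*|^2\leq\frac{L_l/\alpha-1}{\alpha(\mu-\alpha)}|v_k|^2$, which turns the velocity estimate into \eqref{eq:lptmp2} upon absorbing constants into $c_4$.

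The primary difficulty is the stabilization of $I_{x_k}$ in the absence of strict complementarity: the scalar example of Section~\ref{Sec:Example} shows that near $x^*$ the iterate may repeatedly cross an active boundary, so $I_{x_k}$ can oscillate. A robust version must either strengthen the argument via a tangential/normal decomposition of $v_k$ relative to the active manifold at $x^*$ — showing that toggling of weakly-active constraints affects only higher-order terms — or explicitly restrict to the strictly complementary case, as is standard in convergence analyses of active-set methods.
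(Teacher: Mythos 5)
Your overall architecture (stabilize the active set, contract $v_k$ at rate $(1-\mu T)^2=1-2\mu T(1-\mu T/2)$, absorb a geometrically decaying perturbation, then convert $|v_k|$ to $|x_k-x^*|$ via Lemma~\ref{Lem:discrete2} and Lemma~\ref{Lemma:propD}) matches the paper's, and your endgame for \eqref{eq:lptmp2} is identical. But your central recursion takes a genuinely different route, and that route has a gap. You write $v_{k+1}=(I-TH_k)v_k+M_{k+1}(\lambda_{k+1}-\lambda_k)$ and then need $|\lambda_{k+1}-\lambda_k|$ to be small, which you extract by ``substituting into the explicit stationarity identity $M_k^\top M_k\lambda_k=M_k^\top\nabla f(x_k)-\alpha\bar g_A(x_k)$.'' That step implicitly requires $M_k^\top M_k$ to be uniformly invertible, i.e., linear independence of the active constraint gradients. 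The paper only assumes Mangasarian--Fromovitz, under which the dual maximizer need not be unique --- indeed the paper explicitly defines $\lambda(x)$ as \emph{any} maximizer of \eqref{eq:dual2} --- so $\lambda_{k+1}-\lambda_k$ need not even be well controlled when $x_{k+1}-x_k$ is small. There is also a smaller absorption issue: your perturbation bound contains a $C_2|v_k|^2$ piece, and the resulting $O(|v_k|^3)$ cross term can only be folded into a slightly degraded contraction factor (since $|v_k|\to 0$), not into the $(1-\alpha T)^{k-N}$ term, so the exact coefficient in \eqref{eq:lptmp1} would not survive without further care.

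The paper sidesteps the multiplier increment entirely. It works with the dual objective $-\tfrac12|v|^2-\alpha\lambda^\top\bar g(x)$: since Lemma~\ref{Lem:discrete} guarantees $\lambda_k\in D_{x_{k+1}}$, the \emph{optimal} dual value at $x_{k+1}$ is at least its value at the feasible candidate $\lambda_k$, which gives
\begin{equation*}
\tfrac12|v(x_{k+1})|^2+\alpha\lambda_{k+1}^\top\bar g(x_{k+1})\;\leq\;\tfrac12|v_k-T\Delta_x l(\xi_k,\lambda_k)v_k|^2+\alpha\lambda_k^\top\bar g(x_{k+1}),
\end{equation*}
and the right-hand side is handled exactly as in Claim~\ref{Claim:3}. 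The term $-\alpha\lambda_{k+1}^\top\bar g(x_{k+1})$ that remains is then bounded by $c_3(1-\alpha T)^{k-N}$ using boundedness of $\lambda_k$ and the geometric decay of the active constraint values once $I_{x_k}$ is constant (which the paper takes from Claim~\ref{Claim:disLast}, so your worry about strict complementarity is not an obstacle the paper confronts at this stage). If you want to salvage your decomposition-based argument, you would need either to add a regularity assumption (LICQ plus strict complementarity) or to replace the explicit $\lambda$-increment bound with the paper's feasible-candidate comparison.
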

\begin{proof}
\hchange{We first note that the assumptions of Proposition~\ref{Prop:GD} are satisfied and that as a consequence, $x_k$ converges to $x^*$, $I_{x_k}$ is constant for large $k$, and $x_k$ and $\lambda_k$ are bounded. By Lemma~\ref{Lem:discrete2} we conclude that $\lambda_k$ is a feasible candidate for the dual \eqref{eq:dual} at iteration $k+1$. As a result,
\begin{align*}
    \frac{1}{2} |v(x_{k+1})|^2 + \alpha \lambda_{k+1}\T \bar{g}(x_{k+1}) &\leq \frac{1}{2} |-\nabla f(x_{k+1}) + \nabla \bar{g}(x_{k+1}) \lambda_{k}|^2 + \alpha \lambda_k\T \bar{g}(x_{k+1})\\
    & \leq \frac{1}{2} |v(x_k) - \Delta_x l(\xi_k,\lambda_k) T v(x_k)|^2 + \alpha \lambda_k\T \bar{g}(x_{k+1}),
\end{align*}
where $\Delta_x l$ denotes the second derivative of $l$ with respect to $x$, and $\xi_k$ lies between $x_k$ and $x_{k+1}$ (we applied Taylor's theorem in the second step). By the same reasoning as in the proof of Claim~\ref{Claim:3} we conclude
\begin{align*}
    \frac{1}{2} |v(x_{k+1})|^2 + \alpha \lambda_{k+1}\T \bar{g}(x_{k+1}) & \leq (1-2\mu T (1-\mu T/2)) \frac{1}{2} |v(x_k)|^2 + \alpha \lambda_k\T \bar{g}(x_k).
\end{align*}
We further note that $\lambda_k\T \bar{g}(x_k)$ is negative, which leads to 
\begin{align*}
    \frac{1}{2} |v(x_{k+1})|^2 & \leq (1-2\mu T (1-\mu T/2)) \frac{1}{2} |v(x_k)|^2 - \alpha \lambda_{k+1}\T \bar{g}(x_{k+1}).
\end{align*}
For large $k$, $I_{x_k}$ is constant, see Claim~\ref{Claim:disLast} in Section~\ref{Sec:Discrete}, and $|\bar{g}(x_{k})|$ is therefore decaying at the rate $1-\alpha T$ (see Lemma~\ref{Lem:discrete}). This yields \eqref{eq:lptmp1}.}

\hchange{The second part follows from unrolling the recursion in \eqref{eq:lptmp1} and noting that by Lemma~\ref{Lem:discrete2} and Lemma~\ref{Lemma:propD}, $|x_k-x^*|^2$ is bounded by a multiple of $|v(x_k)|^2$.}
\end{proof}

\bibliography{literature}

\end{document}